\documentclass[11pt,a4paper,twoside]{amsart}
\usepackage{amsmath,amsfonts,amsthm,amsopn,color,amssymb,enumitem}
\usepackage{palatino}
\usepackage{graphicx}
\usepackage[colorlinks=true]{hyperref}
\hypersetup{urlcolor=blue, citecolor=red, linkcolor=blue}

\usepackage{relsize}
\usepackage{esint}
\usepackage{verbatim}
\usepackage{mathrsfs}

\newcommand{\cS}{\mathscr{S}}
\newcommand{\e}{\varepsilon}
\newcommand{\emb}{\hookrightarrow}

\newcommand{\R}{\mathbb{R}}
\newcommand{\N}{\mathbb{N}}

\newcommand{\de}{\partial}
\newcommand{\weakto}{\rightharpoonup}
\newcommand{\oR}{\overline{R}}

\DeclareMathOperator{\dv}{div}
\DeclareMathOperator{\supp}{supp}
\DeclareMathOperator{\dist}{dist}

\DeclareMathOperator{\dive}{div}

\newcommand{\Sph}{{\mathbb{S}}}

\newcommand{\D}{{\mathfrak{D}}}

\newcommand{\func}{\overline{\varphi}_{\beta,D}}
\newcommand{\dsh}{{2^\sharp}}
\newcommand{\dst}{{2^*}}
\newcommand{\hsp}{\hspace{0.2cm}}

\newcommand*{\modgrad}[1]{\left\vert \nabla #1\right\vert^2}
\newcommand*{\inv}[1]{\frac{1}{#1}}
\newcommand*{\abs}[1]{\left\vert #1\right\vert}
\newcommand*{\norm}[1]{\left\Vert #1\right\Vert}

\newcommand{\beq }{\begin{equation}}
	\newcommand{\eeq }{\end{equation}}

\newtheorem{theorem}{Theorem}[section]
\newtheorem{lemma}[theorem]{Lemma}

\newtheorem{definition}[theorem]{Definition}
\newtheorem{proposition}[theorem]{Proposition}
\newtheorem{remark}[theorem]{Remark}
\newtheorem{corollary}[theorem]{Corollary}

\title[Conformal metrics with prescribed scalar and mean curvature]{Conformal metrics with prescribed scalar and mean curvature}

\author{Sergio Cruz-Bl\'{a}zquez}
\address{Sergio Cruz-Bl\'{a}zquez, Scuola Normale Superiore, Piazza dei Cavalieri 7, 56126 Pisa, Italy.}
\email{sergio.cruzblazquez@sns.it }

\author{Andrea Malchiodi}
\address{Andrea Malchiodi, Scuola Normale Superiore, Piazza dei Cavalieri 7, 56126 Pisa, Italy.}
\email{andrea.malchiodi@sns.it }

\author{David Ruiz}
\address{David Ruiz, Departamento de Análisis Matemático, University of Granada, Avda. Fuentenueva s/n, 18071 Granada, Spain.}
\email{daruiz@ugr.es}

\thanks{S. C-B. is supported by the Marie Sklodowska-Curie fellowship of the Istituto Nazionale di Alta Matematica "Francesco Severi" number 713485. D. R. has been supported by the FEDER-MINECO Grant PGC2018-096422-B-100 and by J. Andalucia (FQM-116). A.M. has been supported by the project {\em Geometric Variational Problems} and {\em Finanziamento a supporto della ricerca di base} from Scuola Normale Superiore and by MIUR Bando PRIN 2015 2015KB9WPT$_{001}$.  He is also member of GNAMPA as part of INdAM. Part of this work was completed while 
he was visiting F.I.M. at ETH in Z\"urich, to which he is grateful}

\keywords{Prescribed curvature problem, conformal metric, variational methods, blow-up analysis.}

\subjclass[2010]{35J20, 58J32.}

\begin{document}

	\begin{abstract}
		We consider the  case with boundary of the classical Kazdan-Warner problem in dimension 
		greater or equal than three, i.e. the 
		prescription of scalar and boundary mean curvatures via conformal deformations 
		of the metric. We deal in particular with negative scalar curvature and 
		boundary mean curvature of arbitrary sign, which to our knowledge has not been treated
		in the literature. We employ a variational approach to prove new existence results, 
		especially in three dimensions. One of the principal issues for this problem is to obtain compactness 
		properties, due to the fact that {\em bubbling} may occur with profiles of 
		hyperbolic balls or horospheres, and hence one may lose either pointwise estimates on the 
		conformal factor or the total conformal volume. We can sometimes prevent them using 
		integral estimates, Pohozaev identities and domain-variations of different types. 
	\end{abstract}

	\maketitle

	\section{Introduction}\setcounter{equation}{0}

	The problem of prescribing the scalar curvature of a manifold via conformal transformations 
	was initially proposed by Kazdan-Warner in the 70's and received a lot of attention. 
	We will not give here a complete account  on the numerous contributions to it, 
	but  will  refer instead to Chapter 6 in the book \cite{aubin-book} for a 
	general presentation of it and some collection of results. 
	
	\
	
	We are interested  in the case of compact manifolds with boundary and of 
	dimension $n \geq 3$: the natural counterpart in this case is to conformally prescribe 
	both the scalar curvature in the interior part and the mean curvature at the boundary, 
	which results in a nonlinear equation of critical type for the conformal factor. 
	
	More precisely, lat $(M^n, \partial M, g)$ be a $n$-dimensional Riemannian manifold  with 
	scalar curvature $S=S_g$ and boundary mean curvature $h=h_g$. Considering a
	conformal metric $\tilde{g} = u^{\frac{4}{n-2}} g$, with $u : \overline{M} \to \R$ a 
	smooth positive function, the new curvatures $\tilde{S} = S_{\tilde{g}}$ and $\tilde{h} = h_{\tilde{g}}$ 
	are determined by the following formulas, see for instance \cite{cherrier}:
	\begin{align} 
		\left\lbrace \begin{array}{ll}-\frac{4(n-1)}{n-2}\Delta_{g} u + S u = \tilde{S} u^{\frac{n+2}{n-2}} &\mbox{on } M, \\[0.2cm]
			\frac{2}{n-2}\frac{\partial u}{\partial\eta}+ h u = \tilde{h} u^{\frac{n}{n-2}} &\mbox{on } \partial M.\end{array}\right. \label{eq:change}
	\end{align}

	Here $\Delta_g$ stands for the Laplace-Beltrami operator associated to the metric $g$ and $\eta$ is the unit outer normal to $\partial M$.
	
	The question then one asks is the following: given smooth functions $K$ and $H$, \textit{is there any conformal metric $g$ such that the scalar curvature of $M$ and the mean curvature of $\partial M$ with respect to $g$ are $K$ and $H$?}
	
	\

	While the boundary problem \eqref{eq:change} has been in general less investigated than the 
	closed case, still there are results worth mentioning. 	In \cite{cherrier} the regularity of solutions to \eqref{eq:change} was shown, and 
	some first existence results were proven up to Lagrange multipliers. 
	An interesting particular case 
	is the counterpart of the \emph{Yamabe problem}, consisting in prescribing 
	constant functions $H$ and $K$. This study was initiated by Escobar in \cite{escobar Annals}, 
	\cite{escobar}, \cite{escobar Indiana}, with later contributions in \cite{HanLi}, \cite{HanLi2}. 
	For more recent progress in this direction we refer to \cite{Mayer-Ndiaye} and  
	its comprehensive list of references. 
	
	\

	The case of variable functions has been treated in special situations: see e.g. 
	\cite{BEO02}, \cite{BEO05}, \cite{li-domain} concerning minimal boundaries (i.e. $H = 0$), and  
	\cite{Abde}, \cite{CXY},  \cite{DMO04},  \cite{XZ16} for the case $K = 0$ (scalar-flat metrics).

	On the variable-curvature case, we mention the paper \cite{AML}, 
	containing perturbative results about nearly-constant functions on the unit 
	ball of $\R^n$, the case of the 
	three-dimensional half sphere in \cite{djadlimalchiodiahmedou}, with $K > 0$, 
	and \cite{CHS18} for negative curvatures (more precisely, for manifolds with 
	boundary of \emph{negative Yamabe invariant})  where the equations are 
	solved up to some Lagrange multipliers.

	\

	Our goal here is to consider variable functions $K < 0$ and $H$ of arbitrary sign, and to give results about existence of solutions and bubbling behavior. In fact we obtain some counterparts of the results in \cite{LSMR}, which 
	was dealing with the two-dimensional case for domains with positive genus (see also 
	\cite{sergio}, \cite{pistoia}, \cite{JLSMR},  for the case of the disk). As we will see, 
	there are many differences when the dimension is higher than two.

	To state our theorems, we first reduce the problem to a simpler situation using  a result by Escobar \cite[Lemma 1.1]{escobar}. 
	This implies that, without losing generality, via an initial conformal change one can start with $h_g = 0$ and $S_g = S$ not changing sign: 
	this will be always \underline{assumed understood} in the rest of the paper. 
	
		In view of \eqref{eq:change}, we are led to find \emph{positive} solutions of the boundary value problem:
	\begin{align}
	\left\lbrace \begin{array}{ll}-\frac{4(n-1)}{n-2}\Delta_{g} u + S u = K u^{\frac{n+2}{n-2}} &\mbox{on } M, \\[0.2cm]
	\frac{2}{n-2}\frac{\partial u}{\partial\eta} = H u^{\frac{n}{n-2}} &\mbox{on } \partial M.\end{array}\right. \label{eq2}
	\end{align}
	
	\
	
	Solutions of \eqref{eq2} can be obtained as critical points of the following energy functional, defined on $H^1(M)$:
	\begin{equation}\label{eq3}
		I(u)=\frac{2(n-1)}{n-2}\int_M \vert \nabla u\vert^2+\frac{1}{2}\int_M S u^2 -\frac{1}{\dst}\int_M K u^{\dst}-(n-2)\int_{\partial M}Hu^{\dsh},
	\end{equation} 
	where $\dst = \frac{2n}{n-2}$ and $\dsh = \frac{2(n-1)}{n-2}$ are the critical Sobolev exponent for $M$ and the critical trace embedding exponent for $\partial M$, respectively. Here and below we  omit the volume and surface elements in the integrals. 
	As written before, we will assume that $K<0$, so that the third term in the right-hand side of \eqref{eq3} is positive. The interaction between this term and the boundary critical terms will be crucial for the behavior of the energy functional.
	
	\

	Via a trace inequality we show that, in fact, the nature of the functional is determined by a quotient of the prescribed curvatures at the boundary, and it also allows us to compare both terms. For convenience, define the
	\emph{scaling invariant} function $\D_n:\de M\to \R$ by 
	\begin{equation} \label{def Dn}
		\mathfrak{D}_n(x) = \sqrt{n(n-1)}\frac{H(x)}{\sqrt{\abs{K(x)}}}. 
	\end{equation}
	Depending on whether  $\D_n$ is strictly less than $1$ or not, we find ourselves in two completely different scenarios. We notice that boundaries of geodesic spheres in hyperbolic spaces 
	satisfy $\D_n > 1$, while $\D_n = 1$ at boundaries of \emph{horospheres}. Therefore, 
	when $\D_n \geq 1$, there might be blow-ups for  \eqref{eq2} with 
	such  profiles, see more precise later comments in this direction. 
	
	\
	
	Assuming that $\D_n(x)<1$ for every $x\in\partial M$, it turns out that $K$ \emph{shadows} $H$, and the corresponding positive term in $I$ dominates the one at the boundary involving $H$. The result is that the functional becomes coercive and a global minimizer can be found. 
	
	Our first result concerns the case when the Escobar metric satisfies $S<0$, and compared to 
	\cite{CHS18}, \cite{cherrier}  we can solve the original geometric problem without any 
	extra Lagrange multiplier. 
	\begin{theorem}\label{minimum1}
		Suppose $K<0$, and that $\D_n$ as in \eqref{def Dn} verifies $\mathfrak{D}_n < 1$ everywhere on 
		$\partial M$. Then, if $S < 0$, \eqref{eq2} admits a solution.
	\end{theorem}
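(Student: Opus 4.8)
The plan is to produce the solution as a global minimizer of the functional $I$ in \eqref{eq3} on $H^1(M)$. The whole point of the hypothesis $\D_n<1$ is that it renders $I$ coercive; granting that, the direct method will work once we also tame the non-compact critical boundary term, while the sign condition $S<0$ guarantees that the minimizer is nontrivial.

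The heart of the matter, and the step I expect to be hardest, is a trace inequality that pits the boundary term against the two nonnegative terms of $I$ --- recall that $K<0$ turns $-\frac1{\dst}\int_M K\abs{u}^{\dst}$ into $\frac1{\dst}\int_M\abs{K}\abs{u}^{\dst}\ge 0$. Concretely, I would prove that for every $\e>0$ there is $C_\e>0$ such that
\begin{equation*}
(n-2)\int_{\de M}H\,\abs{u}^{\dsh}\;\le\;\Big(\sup_{\de M}\D_n+\e\Big)\left[\frac{2(n-1)}{n-2}\int_M\modgrad{u}+\frac1{\dst}\int_M\abs{K}\,\abs{u}^{\dst}\right]+C_\e\int_M u^2
\end{equation*}
for all $u\in H^1(M)$. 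The argument I have in mind: localize near $\de M$ with a partition of unity on a thin collar where $K$ and $H$ are almost constant --- here the scaling invariance of $\D_n$ is exactly what makes the rescaling harmless --- apply on each piece the sharp Sobolev trace inequality on $\R^n_+$, in the scale-invariant form $\int_{\de\R^n_+}\abs{v}^{\dsh}\le c_n\,\norm{\nabla v}_{L^2(\R^n_+)}\big(\int_{\R^n_+}\abs{v}^{\dst}\big)^{1/2}$ (the relation $2(\dsh-1)=\dst$ dictates these exponents), and then use Young's inequality, absorbing the cross-terms and the localization error into $C_\e\int_M u^2$. That $\D_n=1$ is the correct borderline is forced by the extremals of the model inequality, which are precisely the conformal metrics on $\R^n_+$ with constant negative scalar curvature and constant boundary mean curvature, namely geodesic balls of $\mathbb H^n$ (where $\D_n>1$) degenerating to horospheres ($\D_n=1$); for $\D_n<1$ there is no such extremal and the inequality is strict.

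Granting the trace inequality, coercivity is routine. Since $K$ is continuous and strictly negative on the compact $\overline M$, one has $\abs{K}\ge\delta_0>0$, so $\int_M\abs{K}\abs{u}^{\dst}$ controls $\norm{u}_{L^{\dst}(M)}^{\dst}$ and hence (as $\dst>2$, via Young's inequality) also $\norm{u}_{L^2(M)}^2$ up to an additive constant; moreover $\big|\frac12\int_M Su^2\big|\le\e\norm{u}_{L^{\dst}(M)}^{\dst}+C_\e$ is absorbed because this term is subcritical. Choosing $\e$ with $\sup_{\de M}\D_n+\e<1$ then gives $I(u)\ge c_0\norm{u}_{H^1(M)}^2-C$ for some $c_0>0$. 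For compactness of a minimizing sequence $u_k\weakto u$ (bounded by coercivity) I would write $u_k=u+v_k$ with $v_k\weakto 0$, apply the Brezis--Lieb lemma to the three homogeneous terms of $I$ (using $u_k\to u$ a.e.\ in $M$ and, via the compact embedding $H^1(M)\emb L^2(\de M)$, a.e.\ on $\de M$ as well) and $u_k\to u$ in $L^2(M)$ for the term $\int_M Su^2$; this yields $I(u_k)=I(u)+Q(v_k)+o(1)$, where $Q(v)=\frac{2(n-1)}{n-2}\int_M\modgrad{v}+\frac1{\dst}\int_M\abs{K}\abs{v}^{\dst}-(n-2)\int_{\de M}H\abs{v}^{\dsh}$. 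Applying the trace inequality to $v_k$ gives $Q(v_k)\ge(1-\sup_{\de M}\D_n-\e)\big[\frac{2(n-1)}{n-2}\norm{\nabla v_k}_{L^2(M)}^2+\frac1{\dst}\int_M\abs{K}\abs{v_k}^{\dst}\big]+o(1)\ge o(1)$; since $I(u_k)\to\inf I\le I(u)$, this bracket must tend to $0$, so $v_k\to 0$ in $H^1(M)$ and $u$ attains $\inf I$.

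It remains to check $u$ is admissible. Testing $I$ on the constant functions $t>0$ gives $I(t)=\frac{t^2}{2}\int_M S-\frac{t^{\dst}}{\dst}\int_M K-(n-2)t^{\dsh}\int_{\de M}H$, whose leading term as $t\to0^+$ is $\frac{t^2}{2}\int_M S<0$ because $S<0$; hence $\inf I<0=I(0)$ and the minimizer satisfies $u\not\equiv0$. Replacing $u$ by $\abs{u}$ (the nonlinear terms being even in $u$) we may take $u\ge0$, so $u$ is a weak --- hence, by the regularity result of Cherrier \cite{cherrier}, smooth --- solution of \eqref{eq2}. Rewriting the interior equation as $-\frac{4(n-1)}{n-2}\Delta_g u+\big(S+\abs{K}u^{\dst-2}\big)u=0$, a linear equation with bounded zeroth-order coefficient, the strong maximum principle forces $u>0$ in $M$; then the Hopf lemma applied at a hypothetical boundary zero, together with the boundary condition $\frac{2}{n-2}\partial_\eta u=Hu^{\dsh-1}$, yields a contradiction, so $u>0$ on $\de M$ too. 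Thus $u$ is a positive smooth solution of \eqref{eq2}, which completes the proof.
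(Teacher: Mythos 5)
Your proposal is correct and follows essentially the same route as the paper: the same trace inequality with threshold $\D_n<1$ (Proposition \ref{traceineq}), coercivity, Brezis--Lieb splitting of a minimizing sequence with the trace inequality applied to $u_k-u$, nontriviality from testing on small constants using $S<0$, and positivity via the strong maximum principle and Hopf's lemma. The only cosmetic difference is that you derive the trace inequality by localizing and invoking the model two-factor inequality on $\R^n_+$, whereas the paper performs the equivalent computation directly on $M$ via the divergence theorem applied to an extension of the outer normal; both reduce to the same integration by parts, Cauchy--Schwarz and Young steps with the same constants.
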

	If $S=0,$ extra hypotheses are needed to rule out the possibility of the minimizer beging identically zero, so the solution we  obtain is geometrically admisible.
	\begin{theorem}\label{minimum2}
		Suppose $K<0$ on $M$ and  $\mathfrak{D}_n < 1$ on $\partial M$. Then, if $S = 0$ and $\int_{\partial M}H>0$, \eqref{eq2} admits a solution.
	\end{theorem}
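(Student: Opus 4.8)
The plan is to realize the solution as a global minimizer of $I$ on $H^1(M)$; since $I(0)=0$ when $S=0$, the point that is genuinely new with respect to Theorem~\ref{minimum1} is to prevent the minimizer from being trivial, and this is precisely where the hypothesis $\int_{\de M}H>0$ enters. With $S=0$ the functional reads
\[
I(u)=\frac{2(n-1)}{n-2}\int_M\modgrad{u}+\frac{1}{\dst}\int_M\abs{K}u^{\dst}-(n-2)\int_{\de M}Hu^{\dsh},
\]
and its first two terms are nonnegative because $K<0$. Arguing exactly as for Theorem~\ref{minimum1}, the pointwise bound $\D_n<1$ --- fed into the trace inequality that compares $\int_{\de M}Hu^{\dsh}$ with these two interior terms --- gives $\delta>0$ and $C>0$ with
\[
I(u)\ \ge\ \delta\Big(\frac{2(n-1)}{n-2}\int_M\modgrad{u}+\frac{1}{\dst}\int_M\abs{K}u^{\dst}\Big)-C\norm{u}_{L^2(\de M)}^2
\]
for every $u\in H^1(M)$. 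Since $M$ is compact, $\abs{K}$ is bounded away from zero and $\dst>2$, the boundary term is absorbed up to an additive constant, so $I$ is coercive and bounded below; and since the trace $H^1(M)\hookrightarrow L^2(\de M)$ is compact, the same estimate gives $\liminf_{k}I(v_k)\ge0$ whenever $v_k\weakto0$ in $H^1(M)$.

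Next I would take a minimizing sequence $\{u_k\}$, which we may assume nonnegative since replacing $u_k$ by $\abs{u_k}$ does not increase $I$. By coercivity $\{u_k\}$ is bounded in $H^1(M)$, so along a subsequence $u_k\weakto u\ge0$ in $H^1(M)$, strongly in $L^q(M)$ for $q<\dst$ and in $L^q(\de M)$ for $q<\dsh$, and a.e. With $v_k:=u_k-u\weakto0$, and using $S=0$, the Brezis--Lieb lemma --- applied in $L^{\dst}(M)$ to the term carrying $K$ and on $\de M$, against the bounded weight $H$, to the boundary term --- together with $\int_M\langle\n u,\n v_k\rangle\to0$ yields $I(u_k)=I(u)+I(v_k)+o(1)$. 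Hence $I(v_k)=I(u_k)-I(u)+o(1)\to\inf_{H^1(M)}I-I(u)$, and by the last remark of the previous paragraph this limit is $\ge0$; since $I(u)\ge\inf_{H^1(M)}I$ always holds, we conclude $I(u)=\inf_{H^1(M)}I$, so $u$ is a minimizer (and in fact $v_k\to0$ strongly in $H^1(M)$).

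It remains to rule out $u\equiv0$, for which it suffices to produce some $w$ with $I(w)<0$. Testing $I$ on a constant $t>0$ and using $S=0$,
\[
I(t)=\frac{t^{\dst}}{\dst}\int_M\abs{K}-(n-2)\,t^{\dsh}\int_{\de M}H,
\]
and since $\dsh<\dst$ and $\int_{\de M}H>0$ this is negative for $t>0$ small; hence $\inf I<0=I(0)$ and $u\not\equiv0$. As an unconstrained minimizer, $u$ is a nonnegative weak solution of \eqref{eq2}; by the regularity theory in \cite{cherrier} it is smooth, and writing the interior equation as $-\frac{4(n-1)}{n-2}\Delta_{g}u+\big(-Ku^{\dst-2}\big)u=0$ with $-Ku^{\dst-2}\ge0$, the strong maximum principle together with Hopf's lemma and the boundary condition give $u>0$ on $\overline{M}$. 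Thus $u$ solves \eqref{eq2}.

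The hard part will be the step producing the minimizer: $\int_{\de M}Hu^{\dsh}$ is a genuinely critical boundary term, and a minimizing sequence could a priori concentrate on $\de M$ with a horospherical profile --- exactly the borderline situation $\D_n=1$. The strict inequality $\D_n<1$ supplies the spare factor $\delta>0$ that makes the residual energy $I(v_k)$ nonnegative in the limit and thereby restores compactness; this is the only point where the sharp constant $\sqrt{n(n-1)}$ in \eqref{def Dn} is used. The other ingredient, soft but indispensable, is the nonvanishing of the minimizer, where the sign condition on $\int_{\de M}H$ replaces the zeroth-order interior term that is lost when $S\equiv0$.
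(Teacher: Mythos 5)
Your proposal is correct and follows essentially the same route as the paper: coercivity from the trace inequality of Proposition \ref{traceineq} under $\D_n<1$, a Brezis--Lieb splitting of a minimizing sequence with the residual energy shown nonnegative via the same inequality plus a compact embedding, nontriviality by testing on small positive constants using $2^\sharp<2^*$ and $\int_{\partial M}H>0$, and positivity via the strong maximum principle and Hopf's lemma. The only cosmetic difference is that you absorb the remainder through $\|u\|_{L^2(\partial M)}^2$ rather than the interior $L^{2^\sharp}$ term appearing in \eqref{ti2}, which changes nothing in substance.
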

	On the other hand, if there exists $p\in\partial M$ such that $\D_n(p)>1,$ we can  construct a sequence of functions $u_i$ such that the energy $I(u_i)$ tends to $-\infty.$ While this prevents the existence of minimizers,  in three dimension we can  use the Mountain-pass Theorem to obtain a solution for \eqref{eq2}. 
	\begin{theorem}\label{minmax} Let $n=3$, assume that $S=0$, $K<0$ and that $H$ is such that 
		\begin{enumerate}
			\item $\int_{\de M}H<0$,
			\item $\D_n(\overline{p})>1$ for some $\overline{p}\in \de M$, and
			\item $1$ is a regular value for $\D_n$.
		\end{enumerate}
		Then, \eqref{eq2} admits a positive solution.
	\end{theorem}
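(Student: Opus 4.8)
The plan is to realize a solution of \eqref{eq2} as a mountain-pass critical point of $I$ on $H^1(M)$ (with $n=3$, $S=0$), after restricting attention to the cone of nonnegative functions so that a critical point is automatically positive by the maximum principle and the Hopf lemma. First I would check the mountain-pass geometry: near $u\equiv 0$ the functional is dominated by $\int_M|\nabla u|^2$ minus the subcritical-looking boundary term $\int_{\de M}Hu^{\dsh}$ (with $\dsh = 4$ when $n=3$), but since $K<0$ the positive bulk term $-\frac{1}{\dst}\int_M Ku^{\dst}$ helps; using the trace inequality underlying the definition of $\D_n$, one shows that for $\|u\|$ small $I(u)\ge c\|u\|^2$ on a small sphere — essentially because the boundary term is quadratically small compared with $\|\nabla u\|^2$ plus the $L^{\dst}$ term. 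The fact that $\int_{\de M}H<0$ is what is used to kill the would-be competing negative behavior for constants and small functions, while condition (2), $\D_n(\overline p)>1$, is exactly what lets us build the path to a point of negative energy: concentrating a bubble-like profile (a hyperbolic ball / geodesic sphere profile) near $\overline p$ drives $I\to-\infty$ along a ray, giving the second mountain-pass endpoint. So the geometry is in place and a Palais–Smale sequence at the mountain-pass level $c>0$ exists.

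The heart of the matter is \textbf{compactness} of this Palais–Smale sequence, and this is where I expect the real work to be. A PS sequence $u_i$ is bounded in $H^1(M)$ (coercivity of the quadratic-plus-$L^{\dst}$ part modulo the boundary term is where $\D_n$, now not everywhere $<1$, must be handled carefully via the trace inequality and the sign of $K$), so $u_i\weakto u$ weakly, and $u\ge0$ solves \eqref{eq2}. One must exclude that all the energy escapes into bubbles. Since we are at a low, positive energy level $c$, a bubbling profile would carry a fixed quantum of energy, and the strategy is to choose the mountain-pass path carefully so that $c$ lies strictly below the energy threshold of the cheapest bubble. In dimension three the relevant bubbles are boundary bubbles with the horosphere/geodesic-sphere profiles discussed in the introduction; the key is a careful energy expansion of the test path built near $\overline p$ showing $c$ is below $2\times$(or $1\times$) the bubble energy, together with a blow-up analysis à la Struwe ruling out interior bubbles (impossible here since $K<0$ makes the interior term have the "wrong" sign for a standard bubble) and controlling boundary bubbles. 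The condition (3), that $1$ is a regular value of $\D_n$, is presumably used precisely in this blow-up analysis: near the set $\{\D_n=1\}$ one needs a Pohozaev-type identity or a domain-variation argument, and transversality of $\D_n$ to the level $1$ gives the nondegeneracy needed to push the Pohozaev balance and exclude concentration on $\{\D_n\ge1\}$.

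Concretely, the steps in order: (i) set up the constrained functional on $\{u\ge0\}$ and verify mountain-pass geometry, using the trace inequality, $K<0$, and $\int_{\de M}H<0$ for the local minimum at $0$, and the concentration near $\overline p$ with $\D_n(\overline p)>1$ for the far endpoint; (ii) produce a PS sequence at level $c$ and prove it is bounded in $H^1(M)$; (iii) extract a weak limit $u\ge0$ solving \eqref{eq2}, and show $u\not\equiv0$ — here one argues that if $u\equiv0$ then all energy concentrates and the concentration-compactness / blow-up analysis applies; (iv) run the blow-up analysis: classify possible blow-up profiles, using $K<0$ to exclude interior bubbles and the Escobar-type boundary classification for boundary bubbles, and invoke the Pohozaev identity together with the regular-value hypothesis on $\D_n$ to rule out concentration; (v) compare the mountain-pass level $c$ with the minimal bubble energy via the test-path energy expansion to close the argument, concluding $u_i\to u$ strongly and $u$ is the desired positive solution. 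The main obstacle, as flagged in the abstract, is step (iv)–(v): controlling boundary bubbling when $\D_n\ge1$ somewhere, where neither pointwise bounds on the conformal factor nor total conformal volume are a priori available, and extracting enough from Pohozaev/domain-variation identities — with the regular-value condition on $\D_n$ — to obtain the contradiction.
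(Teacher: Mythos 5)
Your mountain-pass geometry is essentially the paper's (Lemma \ref{ps-2} for the local minimum at $0$, using the splitting $u=\tilde u+\bar u$ and $\int_{\partial M}H<0$ to control the mean value; Lemma \ref{tf15} for the negative endpoint via concentration at $\overline p$). The compactness part, however, contains two genuine gaps. First, you assert that a Palais--Smale sequence for the critical functional $I$ is bounded in $H^1(M)$ by ``coercivity modulo the boundary term''; but under hypothesis (2) the functional is not coercive (it is unbounded below, which is exactly what Lemma \ref{tf15} shows), and the three different homogeneities ($2$, $\dst$, $\dsh$) prevent the usual Ambrosetti--Rabinowitz-type combination $I(u_i)-\theta I'(u_i)[u_i]$ from yielding a bound. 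The paper does not attempt this: it replaces $I$ by subcritical functionals $I_i$ with exponents $p_i\nearrow\frac{n+2}{n-2}$ and a parameter $\kappa_i$ in front of the boundary term, and obtains bounded Palais--Smale sequences only via Struwe's monotonicity trick; compactness for each fixed $i$ is then free because the exponents are subcritical. The real problem becomes the uniform boundedness of the resulting \emph{exact solutions} $u_i$ of the perturbed problems, i.e.\ Theorem \ref{compactness}, not the convergence of a single PS sequence for $I$.

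Second, your step (v) --- choosing the path so that the mountain-pass level lies strictly below the energy of the ``cheapest bubble'' --- is not viable here. Blow-up can occur on the set $\{\D_n=1\}$ with the one-dimensional profiles of \eqref{lp3}; the corresponding singular set $\mathscr{S}_0$ can be an entire boundary component (see the explicit example in Section \ref{s:prel}), so there is no energy quantization and no ``first bubble level'' to stay below. The paper rules out the two types of blow-up by completely different means: points of $\mathscr{S}_1$ (genuine bubbles, where $\D_n>1$) are shown to be isolated and simple in dimension $3$ and are then excluded by the integral estimate of Proposition \ref{dtb1}, which holds for solutions when $S\le 0$ but fails for bubbles (Proposition \ref{dtb3}); points of $\mathscr{S}_0$ are excluded by showing, via tangential domain variations applied to the limit measure $\sigma$, that $\nabla^T\D_n$ would have to vanish on $\supp\sigma\subset\{\D_n=1\}$, contradicting the regular-value hypothesis. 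You correctly guessed that the regular-value condition enters through a Pohozaev/domain-variation identity, but the surrounding architecture (subcritical approximation, blow-up classification via Ekeland's principle, the isolated-simple analysis, and the passage to the limit measure) is the actual content of the proof and is missing from your outline.
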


	We will explain below why we have a dimensional restriction   in Theorem \ref{minmax}, 
	while giving an outline of the proof. To prove the existence of Min-Max solutions it is necessary to show that Palais-Smale sequences of approximate solutions to \eqref{eq2} converge. For doing this, two main difficulties occur: first, one needs to show 
	their boundedness in norm, which is unclear in our case due to the triple homogeneity of the 
	Euler-Lagrange functional. Second, because of the presence of critical exponents in \eqref{eq2}, 
	even bounded Palais-Smale sequences may not converge.

	\
	
	To deal with the first issue we make use of  \emph{Struwe's monotonicity trick}, see \cite{str88}, 
	consisting in perturbing the problem by a parameter with respect to which the energy is monotone, together 
	with a subcritical approximation to guarantee compactness of 
	Palais-Smale sequences. We consider for these reasons  the following situation.

	Let $(K_i)_i$  be a sequence of regular functions defined on $M$ such that $K_i\to K$ in $C^2(\overline{M})$ and  let $(H_i)_i$ be a sequence of smooth functions on $\partial M$ such that $H_i\to H$ in $C^2(\partial M)$. Assuming that $K<0$, we consider positive solutions 
	$(u_i)_i$ to the perturbed problem 
	\begin{align}\label{eq4}
		\left\lbrace \begin{array}{ll}
			-4\frac{n-1}{n-2}\Delta_g u_i+S u_i = K_i {u_i}^{p_i} & \mbox{ on } M, \\[0.2cm]
			\frac{2}{n-2} \frac{\partial u_i}{\partial \eta} = H_i {u_i}^{\frac{p_i+1}{2}} & \mbox{ on } \partial M, 
		\end{array} \right.
	\end{align}
namely critical points of the energy functional:
	\begin{equation}\label{eqIi}
	I_i(u)=\frac{2(n-1)}{n-2}\int_M \vert \nabla u\vert^2+\frac{1}{2}\int_M S u^2 -\frac{1}{p_i+1}\int_M K_i  |u|^{p_i+1}-4\frac{n-1}{p_i+3}\int_{\partial M}H_i |u|^{\frac{p_i+3}{2}},
\end{equation} 
	with $p_i\nearrow \frac{n+2}{n-2}$. The question is then whether such solutions could be 
	uniformly bounded from above, in which case they would converge to a solution 
	of the original problem \eqref{eq2}. 

Assuming the contrary, consider a sequence $(u_i)$  of solutions of \eqref{eq4}, and define its \emph{singular set} as
	\begin{equation*}
		\mathscr{S} = \{p\in \overline{M}:\exists x_i\to p \mbox{ such that }u_i(x_i) \mbox{ is unbounded}\}.
	\end{equation*}
	In this regard, we have the following compactness result, which is interesting in its own right.
	\begin{theorem}\label{compactness} Let $(u_i)$ be a sequence of solutions of \eqref{eq4}, and $\mathscr{S}$ be the associated singular set. Then
		\begin{enumerate}
			\item $\mathscr{S}\subset \{p\in \partial M: \D_n(p)\geq 1\}$.
		\end{enumerate}
		Therefore, we can write $\mathscr{S}=\mathscr{S}_0 \sqcup \mathscr{S}_1$, with $\mathscr{S}_1 = \mathscr{S}\cap \{\D_n>1\}$ and $\mathscr{S}_0=\mathscr{S}\cap \{\D_n=1\}$. In dimension $n=3$, we have further:
		\begin{enumerate}
			\item[(2.1)] $\mathscr{S}_1$ is a finite collection of points.
			\item[(2.2)] If $S\leq 0$, then $\mathscr{S}_1=\emptyset$.
			\item[(2.3)] If $I_i(u_i)$ is uniformly bounded and  $1$ is a regular value of $\D_n$, then $\mathscr{S}_0=\emptyset$.
		\end{enumerate}
	\end{theorem}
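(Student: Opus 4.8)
The plan is to run a blow-up analysis on the sequence $(u_i)$ of solutions of \eqref{eq4}, in the spirit of the work of Schoen and of Li--Zhu on critical boundary problems and paralleling the two-dimensional analysis of \cite{LSMR}, but adapted to the sign condition $K<0$. First I would take $p\in\mathscr{S}$, choose $x_i\to p$ realizing the concentration, set $\mu_i=u_i(x_i)^{-(p_i-1)/2}\to0$, and rescale: in suitable coordinates --- Fermi coordinates if $p\in\de M$ --- put $v_i(y)=\mu_i^{2/(p_i-1)}u_i(\exp_{x_i}(\mu_i y))$, so that along a subsequence $v_i\to U$ in $C^2$ on compact sets, with $U\geq0$, $U\not\equiv0$, solving a limiting equation. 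A standard selection of concentration points reduces the interior case to a nonnegative, bounded, subharmonic $U$ on $\Rn$ attaining an interior maximum --- since $K(p)<0$ makes $-\tfrac{4(n-1)}{n-2}\Delta U=K(p)U^{\frac{n+2}{n-2}}$ subharmonic --- which forces $U$ constant and hence $U\equiv0$, a contradiction. Therefore $\mathscr{S}\subset\de M$, and $U$ solves the half-space model $-\tfrac{4(n-1)}{n-2}\Delta U=K(p)U^{\frac{n+2}{n-2}}$ in $\R^n_+$, $\tfrac{\de U}{\de\eta}=\tfrac{n-2}{2}H(p)U^{\frac{n}{n-2}}$ on $\de\R^n_+$. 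Reading $U^{4/(n-2)}\delta$ as a conformal metric on a ball, this is exactly the equation for a metric of constant negative scalar curvature and constant boundary mean curvature on the hyperbolic ball, and the Liouville-type classification of such $U$ shows they occur only as profiles of hyperbolic geodesic spheres, where $\D_n(p)>1$, or horospheres, where $\D_n(p)=1$, and never when $\D_n(p)<1$. This proves (1) and the splitting $\mathscr{S}=\mathscr{S}_0\sqcup\mathscr{S}_1$.

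For the three-dimensional refinements I would first treat $\mathscr{S}_1$. Near a point $p$ with $\D_n(p)>1$ the local profile is the finite-volume geodesic-sphere bubble, and using the standard selection of concentration points together with a boundary Harnack inequality I would show that every such $p$ is an \emph{isolated simple} blow-up point; isolatedness and compactness of $\overline M$ then force $\mathscr{S}_1$ to be finite, which gives (2.1). For (2.2) I would expand $u_i=c\,(G+o(1))$ near $p$, with $G$ the Green's function of $-\tfrac{4(n-1)}{n-2}\Delta+S$ with homogeneous Neumann-type boundary condition, and apply a Pohozaev identity on small geodesic half-balls centred at $p$; letting $i\to\infty$ and then shrinking the radius, in dimension three all curvature remainders (second fundamental form of $\de M$, Weyl-type contributions) are of higher order, and the identity collapses to a relation of the form $0=aS(p)+b\,\mathcal A(p)$ with $a,b>0$ and $\mathcal A(p)$ a nonnegative ``mass''-type quantity --- impossible when $S\leq0$, so $\mathscr{S}_1=\emptyset$. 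This is precisely where $n=3$ is used: it is what keeps those remainders negligible and the blow-up simple, whereas in higher dimensions one would face bubble towers and non-negligible conformal-curvature contributions.

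For (2.3) I would rule out horosphere concentration. Points of $\mathscr{S}_0$ carry the $\D_n=1$ profile, whose bubble spreads along $\de M$ and has no fixed finite volume, so the argument of the first step does not suffice on its own. Here I would use the uniform bound $I_i(u_i)\leq C$ to control the total conformal volume $\int_M|u_i|^{\dst}+\int_{\de M}|u_i|^{\dsh}$, which prevents the formation of a spreading horo-bubble unless $\D_n$ is asymptotically equal to $1$ over a large portion of $\de M$ near $p$; but since $1$ is a regular value, $\{\D_n=1\}$ is a smooth hypersurface of $\de M$ and $\nabla\D_n(p)\neq0$, and feeding this transversality into a Pohozaev/Kazdan--Warner-type balancing identity for the rescaled solutions produces a non-vanishing obstruction term, a contradiction; hence $\mathscr{S}_0=\emptyset$.

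I expect the main obstacle to be Step 3 and, more broadly, the borderline case $\D_n=1$: unlike when $\D_n>1$, the limiting profile is non-compact and of infinite volume, so the usual Pohozaev and concentration-compactness machinery must be combined carefully with the global energy bound and with the transversality of $\D_n$ at level $1$ to extract quantitative information. A second, more technical, difficulty is establishing the isolated-simple-blow-up structure in Step 2 together with the correct signs in the limiting Pohozaev identity.
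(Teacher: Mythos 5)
Your overall architecture for (1), (2.1) and (2.3) matches the paper's: rescaling plus the Chipot--Fila--Shafrir classification for (1), the isolated-simple structure of blow-ups in $\mathscr{S}_1$ for $n=3$ in (2.1), and a balancing identity combined with transversality of $\D_n$ at level $1$ for (2.3). Two points you gloss over matter. First, at points of $\mathscr{S}_0$ the limiting profile $v_\alpha$ is monotone in $x_n$ and has no critical point, so one cannot centre the rescaling at local maxima of $u_i$; the paper replaces this with a selection of points via Ekeland's variational principle, which supplies the local upper bound \eqref{tfgc4} needed for $C^2_{loc}$ convergence (a doubling-type selection would also do, but some such device is indispensable and "standard selection of concentration points" hides it). Second, in (2.3) the energy bound does \emph{not} control the conformal volume: near $\mathscr{S}_0$ the quantities $\int_M\modgrad{u_i}$, $\int_M|K_i|u_i^{p_i+1}$ and $\int_{\de M}H_iu_i^{(p_i+3)/2}$ all diverge. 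What $I_i(u_i)=O(1)$ gives, together with $I_i'(u_i)[u_i]=0$ and $H^2=\tfrac16|K|$ on $\{\D_n=1\}$, is that after normalization by $\rho_i$ these quantities converge to fixed multiples of one common measure $\sigma$ supported on $\mathscr{S}_0$ (Proposition \ref{im9}); the contradiction then comes from domain variations with the normal field $\mathtt{d}\,\chi(\mathtt{d})\nabla\mathtt{d}$ and with arbitrary tangential fields, forcing $\nabla^T\D_n=0$ on $\supp\sigma$. Your "balancing identity" is the right idea, but the quantitative input is the asymptotic proportionality of the diverging terms, not a volume bound.

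The genuine gap is in (2.2). You propose the classical route: a Green's-function expansion at an isolated simple blow-up point plus a local Pohozaev identity yielding $0=aS(p)+b\,\mathcal A(p)$ with $\mathcal A(p)\ge0$. Two things go wrong. In $n=3$ the local scalar-curvature contribution to the Pohozaev identity, after the natural normalization by $u_i(x_i)^2$, tends to $\int_{B_r}S h^2$ with $h=a|x|^{-1}+b$, which is $O(r)$ and disappears as $r\to0$; so $S(p)$ does not survive in the limiting identity and no relation of the form you write comes out. More seriously, the sign of the "mass" $\mathcal A(p)$ (essentially $b(0)$) would require a positive-mass-type theorem, which is neither available nor used here: in Proposition \ref{tec2} the regular part $b$ is merely a Neumann-harmonic function with no a priori sign at the origin, and it is only in the \emph{non-simple} contradiction argument of Proposition \ref{3d7} that one gets $b\equiv a>0$ for free. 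The paper's actual proof of (2.2) is entirely different and bypasses mass considerations: multiplying \eqref{tfgc1} by a suitable negative power $f(u_i)$ and integrating by parts over $M$, the hypothesis $S\le0$ makes the indefinite term $-\int_M Su_if(u_i)$ nonnegative and yields the uniform weighted Dirichlet bound of Proposition \ref{dtb1}; Proposition \ref{dtb3} shows that the bubbles $b_\beta$ violate this bound as $\beta\to0$, and since by \eqref{eq:1+o1} solutions resemble bubbles at unit scale near points of $\mathscr{S}_1$, one concludes $\mathscr{S}_1=\emptyset$. To complete your route you would need to carry out the positive-mass analysis in this negative-curvature boundary setting (not obviously feasible), or switch to an integral estimate of this capacity type.
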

	
	The above result gives a description of two types blow-up points, gathered in the sets $\mathscr{S}_0$ and $\mathscr{S}_1$. Blow-up profiles are solutions of the following problem in the half-space
	\begin{equation}\label{eq5}
		\left\lbrace\begin{array}{ll}\frac{-4(n-1)}{n-2}\Delta v =K(p)v^{\frac{n+2}{n-2}} & \mbox{on }\R^n_+ \\\frac{2}{n-2}\frac{\partial v}{\partial\eta}=H(p)v^{\frac{n}{n-2}} & \mbox{ on } \partial \R^n_+\end{array}\right.,
	\end{equation}
	where $p\in S$. Solutions to \eqref{eq5} were classified in \cite{ChipotFilaShafrir} (see also \cite{li-zhu}) as follows: 
	\begin{itemize}
		\item[\textcolor{blue}{$\star$}] If $\D_n(p)<1$, there are no solutions.
		\item[\textcolor{blue}{$\star$}] If $\D_n(p)=1$, the only solutions are \emph{$1-$dimensional} and given by:
		\begin{equation*}
			v(x)=v_\alpha(x):=\left(\frac{2}{\sqrt{n(n-2)}}x_n+\alpha\right)^{-\frac{n-2}{2}},
		\end{equation*}
		for any $\alpha > 0.$
		\item[\textcolor{blue}{$\star$}] If $\D_n(p)>1$, the solutions are called \emph{bubbles} and  given by
		\begin{equation*}
			v(x)=b_\beta(x):=\frac{\left(n(n-2)\right)^{\frac{n-2}{4}}\beta^{\frac{n-2}{2}}}{\left(\abs{x-x_0(\beta)}^2-\beta^2\right)^{\frac{n-2}{2}}},
		\end{equation*}
		with $x_0(\beta)=-\D_n(p)\beta e_n \in \R^n$,  $\beta > 0$.
	\end{itemize}
	
	We would like to emphasize that the singular set $\mathscr{S}$ can be infinite, contrarily to what happens in the case without boundary, at least in low dimensions. The development of a blow-up analysis in a situation of an infinite number of blow-up points is one of the main goals of this paper. Moreover, both types of blow-up behavior are indeed possible; the one at points of $\mathscr{S}_1$ can be produced by the invariance of the problem under conformal maps of the disk, in analogy with what happens in the closed case. But in this framework we can have blow-up around sets infinite sets $\mathscr{S}_0$. In Section \ref{s:prel} we show an easy example showing that this is possible.

	Compared to the two-dimensional case studied in \cite{LSMR} we have more rigidity in the classification, since for the half-plane other solutions are generated by meromorphic functions, see \cite{mira-galvez}. On the other hand, in the two-dimensional case one can make use \emph{complex-analytic tools} which are not availabe in higher dimensions.

	To deal with loss of compactness at points where $\D_n>1$, we perform a precise study on the blow-up behavior, 
	showing that, \underline{in dimension $n = 3$} they are \emph{isolated and simple} and therefore they form a finite collection (see also as for \cite{djadlimalchiodiahmedou} in this regard). Once this is proved, we are able to control solutions 
	also away from such points, dismissing this kind of blow-up by some integral estimates which hold 
	true when $S \leq 0$, see Proposition \ref{dtb1}.

	On the other hand, around blow-up points with $\D_n = 1$, the terms $\int_M \modgrad{u_i}$, $\int_M \abs{K_i}{u_i}^{p_i+1}$ and $\int_{\de M}H_i{u_i}^{\frac{p_i+3}{2}}$ diverge. Assuming \emph{boundedness of the energies} $I_i(u_i)$, (which is natural for min-max sequences) we can prove that they converge weakly towards the same measure after proper normalization. Using then a \emph{domain-variation 
		technique} we show that at such blow-up 
	points the gradient of $\D_n$ at $\{ \D_n = 1 \}$ must vanish, 
	contradicting our assumption on the regularity of this level. Compared to a similar step 
	in \cite{LSMR} for the two-dimensional case, we have to choose arbitrary deformations 
	tangent to $\partial M$.

	\
	
	The plan of the paper is the following. In Section \ref{s:prel} we collect some preliminary facts, including 
	the classification of limiting profiles and some integral identities deriving from domain variations. 
	Section \ref{s:variational} is devoted to the geometry of the Euler-Lagrange functional I, derived from a 
	trace inequality, which can give rise to either a minimization or a mountain-pass scheme depending 
	on the values of the prescribed curvatures. Section \ref{s:blowup} is concerned with some general properties of 
	blowing-up solutions, including the existence of limiting profiles for any point in $\cS$. 
	In Section \ref{s:i-s-bu} we prove decay properties of solutions presenting isolated-simple blow-ups, 
	while in Section \ref{s:n=3} we specialize to the three-dimensional case and prove that all blow-ups 
	are of this type. In Section \ref{s:pf-thm} we then prove Theorem \ref{minmax} ruling out all blow-up possilibities, 
	including the set $\{\D_n = 1\}$, via integral estimates. In the Appendix we finally collect some test function 
	estimates. 
	
	\
	
	\begin{center}
		\textbf{Notation}
	\end{center}

	For the sake of simplicity, we sometimes use the notation:
	
	$$ C_n = \frac{4(n-1)}{n-2}.$$
	
	For $X= M$ or $X = \R^n_+$, and given a domain $\Omega$ in $X$, we use the symbol $\partial_0 \Omega$ when  
	referring to $\bar{\Omega} \cap \partial X$, while $\partial^+ \Omega$ 
	will stand for the set $\partial \Omega \cap X$. Moreover, we will write $\Omega_+ = \Omega \cap X$. When working in geodesic normal 
	coordinates centered at some point of $M$, we will often use the 
	symbol $|x-y|$ for the Riemannian distance of two points with coordinates $x$ and $y$ 
	respectively.

	For functions defined on $\partial M$ we define their derivatives with a superscript $T$. For instance, $\nabla K$ denotes the gradient of $K$ and $\nabla^T K$ its tangential gradient on $\partial M$. Moreover, $\eta$ denotes always the unit outer normal vector at the corresponding boundary. 
	
	For notational convenience, we will often omit volume or surface elements in 
	integrals and we will use the symbol $\int$ to denote boundary integration: we will also denote by $C$ positive constants that might vary 
	from one formula to another, or also within the same one.

	\section{Preliminaries} \setcounter{equation}{0} \label{s:prel}
	
	In this section we introduce some preliminary results that will be of use in the rest of the paper.
	%
	%
	%
	We start by recalling the following Lemma by Escobar:
	\begin{lemma} (\cite{escobar}, Lemma 1.1.) If $(M^n, g)$ is a compact Riemannian manifold with boundary
		and $n\geq 3$, there exists a conformal metric to $g$ whose scalar curvature does not change sign and the boundary is minimal.
	\end{lemma}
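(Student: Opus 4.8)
The plan is to reduce the statement to a linear eigenvalue problem for the conformal Laplacian with the conformal Robin boundary condition; its first eigenfunction will play the role of the conformal factor. With $C_n=\frac{4(n-1)}{n-2}$ as above, and for $u\in H^1(M)$, one integrates by parts the operators appearing in \eqref{eq:change} to form the quadratic form
\[
\mathcal{Q}(u)\;=\;C_n\int_M|\nabla u|^2\;+\;\int_M S\,u^2\;+\;2(n-1)\int_{\partial M}h\,u^2 .
\]
First I would check that $\mathcal{Q}$ is bounded below on $\{\,u:\int_M u^2=1\,\}$: the term $\int_M S u^2$ is bounded by $\|S\|_\infty$, while the boundary term is absorbed, via the trace inequality $\|u\|_{L^2(\partial M)}^2\le\e\|\nabla u\|_{L^2(M)}^2+C_\e\|u\|_{L^2(M)}^2$ and Young's inequality, into $\tfrac{C_n}{2}\int_M|\nabla u|^2$ plus a multiple of $\int_M u^2$. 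Hence $\mathcal{Q}(u)\ge\tfrac{C_n}{2}\|\nabla u\|_{L^2}^2-C\|u\|_{L^2}^2$, so that
\[
\lambda_1\;:=\;\inf\Bigl\{\,\mathcal{Q}(u):u\in H^1(M),\ \int_M u^2=1\,\Bigr\}\;>\;-\infty .
\]

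Next I would show the infimum is attained. By the coercivity estimate a minimizing sequence is bounded in $H^1(M)$, hence (up to a subsequence) converges weakly in $H^1$ and strongly in $L^2(M)$ and $L^2(\partial M)$, by compactness of the embedding and of the trace; weak lower semicontinuity of the Dirichlet term together with strong convergence of the lower order terms produces a minimizer $\varphi_1$ with $\int_M\varphi_1^2=1$. Since $\mathcal{Q}(|\varphi_1|)=\mathcal{Q}(\varphi_1)$ we may take $\varphi_1\ge 0$. Its Euler--Lagrange equations are, weakly,
\[
-C_n\Delta_g\varphi_1+S\,\varphi_1=\lambda_1\,\varphi_1\ \text{ in }M,\qquad \frac{2}{n-2}\frac{\partial\varphi_1}{\partial\eta}+h\,\varphi_1=0\ \text{ on }\partial M,
\]
and elliptic regularity up to the boundary for this linear problem gives $\varphi_1\in C^\infty(\overline M)$.

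Finally I would upgrade $\varphi_1\ge 0$ to $\varphi_1>0$ on all of $\overline M$, and conclude. Writing the interior equation as $-\Delta_g\varphi_1+\bigl(\tfrac1{C_n}(S-\lambda_1)+\mu\bigr)\varphi_1=\mu\varphi_1\ge 0$ with $\mu$ large enough that the zeroth order coefficient is nonnegative, the strong maximum principle forces $\varphi_1>0$ in the interior (as $\varphi_1\not\equiv 0$); a boundary zero $x_0$ would then give $\partial_\eta\varphi_1(x_0)<0$ by Hopf's boundary point lemma, contradicting the Robin condition $\tfrac{2}{n-2}\partial_\eta\varphi_1(x_0)=-h(x_0)\varphi_1(x_0)=0$, so $\varphi_1>0$ on $\partial M$ too. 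Setting $\tilde g=\varphi_1^{4/(n-2)}g$, the transformation rules \eqref{eq:change} give $\tilde h\,\varphi_1^{n/(n-2)}=\tfrac{2}{n-2}\partial_\eta\varphi_1+h\varphi_1=0$, hence $\tilde h\equiv 0$ (minimal boundary), and $\tilde S\,\varphi_1^{(n+2)/(n-2)}=-C_n\Delta_g\varphi_1+S\varphi_1=\lambda_1\varphi_1$, hence $\tilde S=\lambda_1\varphi_1^{-4/(n-2)}$, which has the constant sign of $\lambda_1$. The only genuinely delicate point is this last strict positivity \emph{up to} the boundary: it cannot be deduced from an interior argument alone and requires combining Hopf's lemma with the Robin boundary condition; everything else is a routine application of the direct method and of standard elliptic theory.
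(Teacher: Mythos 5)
Your argument is correct and is essentially the proof of the cited source: the paper itself does not prove this lemma but simply invokes Escobar's Lemma 1.1, whose argument is exactly the one you give, namely minimizing the quadratic form of the conformal Laplacian with its conformal Robin boundary term over the unit $L^2$-sphere, and using the resulting positive first eigenfunction as the conformal factor so that $\tilde h\equiv 0$ and $\tilde S=\lambda_1\varphi_1^{-4/(n-2)}$ has the sign of $\lambda_1$. All the steps (coercivity via the trace inequality, the direct method with the compact trace embedding, regularity, and the combination of the strong maximum principle with Hopf's lemma and the Robin condition to get strict positivity up to $\partial M$) are sound.
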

	So, without loss of generality, we will from now on assume that $\partial M$ is minimal and $S$ does not change sign in equation \eqref{eq:change}. Then we are led to equation \eqref{eq2}.
	
	\subsection{The limit problem and its solutions}
	
	When performing blow-up analysis, one is usually concerned with certain limit problems after a proper rescaling of solutions. In this case, we are interested in solutions with constant curvatures in the half-space:
	
	\begin{equation}\label{lp1}
		\left\lbrace \begin{array}{ll}
			-\Delta v = \frac{n-2}{4(n-1)}K(0)v^{\frac{n+2}{n-2}} & \mbox{on } \R^n_+ \\ \frac{\partial v}{\partial \eta } = \frac{n-2}{2}H(0)v^{\frac{n}{n-2}}, & \mbox{on } \partial\R^n_+.
		\end{array} \right.
	\end{equation}
	
	The next result was proved  in \cite{ChipotFilaShafrir}.
	
	\begin{proposition}\label{lp2} The following assertions hold true:
		\begin{enumerate}
			\item If $\D_n(0)<1$ then \eqref{lp1} does not admit any solution.
			\item If $\D_n(0)=1$, the only solutions are $1-$dimensional and given by:
			\begin{equation}\label{lp3}
				v(x)=v_\alpha(x):=\left(\frac{2}{\sqrt{n(n-2)}}x_n+\alpha\right)^{-\frac{n-2}{2}},
			\end{equation}
			for any $\alpha > 0.$
			\item If $\D_n(0)>1$, the solutions  (called \textit{bubbles})  are given by
			\begin{equation}\label{lp4}
				v(x)=b_\beta(x):=\frac{\left(n(n-2)\right)^{\frac{n-2}{4}}\beta^{\frac{n-2}{2}}}{\left(\abs{x-x_0(\beta)}^2-\beta^2\right)^{\frac{n-2}{2}}},
			\end{equation}
			with $x_0(\beta)=-\D_n(0)\beta e_n \in \R^n$, for  $\beta > 0$ arbitrary. In this case, we highlight the following assymptotic behavior:
			\begin{equation} \label{lp18}
				\lim_{\abs{x}\to+\infty}\abs{x}^{n-2}b_{\beta_0}(x) = \left(n(n-2)\right)^{\frac{n-2}{2}}{\beta_0}^{\frac{n-2}{2}}
			\end{equation}
			for any fixed $\beta_0>0$.
		\end{enumerate}
	\end{proposition}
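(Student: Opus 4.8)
We sketch how to recover this classification, which is due to \cite{ChipotFilaShafrir} (see also \cite{li-zhu}). First I would use the scaling invariance $v\mapsto\mu v$ to normalise $\frac{n-2}{4(n-1)}|K(0)|=1$ -- the choice that makes \eqref{lp3}--\eqref{lp4} appear in exactly the stated form -- so that \eqref{lp1} becomes $\Delta v=v^{q}$ on $\R^n_+$ and $\partial_\eta v=b\,v^{n/(n-2)}$ on $\partial\R^n_+$, with $q=\frac{n+2}{n-2}$ and $b:=\frac{n-2}{2}H(0)$ a fixed positive multiple of $\D_n(0)$; note that $v$ is then subharmonic. The case $H(0)\leq0$ (hence $\D_n(0)\leq0<1$) can be disposed of immediately: the even reflection $\tilde v(x',x_n):=v(x',|x_n|)$ satisfies $\Delta\tilde v=\tilde v^{q}-2b\,v^{n/(n-2)}\,\delta_{\{x_n=0\}}\geq\tilde v^{q}$ on $\R^n$ in the sense of distributions, so after mollification one obtains a positive entire super-solution of $\Delta u\geq u^{q}$ with $q>1$, which cannot exist by a Keller--Osserman / spherical-average bound (it forces $u(0)=O(R^{-2/(q-1)})\to0$ as the ball radius $R\to\infty$). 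For the remaining cases the plan is: (i) prove the rigidity statement $v=Q^{-(n-2)/2}$ with $Q$ a polynomial of degree at most two, positive on $\overline{\R^n_+}$; (ii) use the interior equation to reduce $Q$ to an affine or a specific quadratic shape; (iii) impose the boundary condition, which separates the alternatives.

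For step (i) I would use the \emph{method of moving spheres} of Li--Zhu rather than moving planes: the point is that $v$, being subharmonic, need not decay at infinity, so the asymptotic information that usually launches the moving-plane scheme is not available here. One considers inversions about spheres centred at boundary points $\bar x\in\partial\R^n_+$ (these preserve $\R^n_+$); by the conformal covariance of the scalar-curvature equation and of the nonlinear Neumann condition, the corresponding Kelvin transform $v^{\bar x,\lambda}$ of $v$ solves the same system. Comparing $v$ with $v^{\bar x,\lambda}$ outside the sphere -- and using that the linearised interior operator has the favourable form $-\Delta+V$ with $V\geq0$, a consequence of $K(0)<0$ -- one produces for each $\bar x$ a critical radius $\lambda(\bar x)\in(0,+\infty]$ at which $v$ coincides with its inversion; the calculus lemma of \cite{li-zhu} then gives $v=Q^{-(n-2)/2}$ as claimed. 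A preliminary round of moving planes across hyperplanes orthogonal to $\partial\R^n_+$ (which also preserve the problem, $K(0)$ and $H(0)$ being constants) yields axial symmetry and helps organise the argument. I expect this symmetry analysis to be the main obstacle: on the reflected and inverted functions the boundary condition is of Robin type with coefficient of the sign of $H(0)$, which is not always the one favourable for a boundary maximum, so one must couple the Hopf lemma with the good interior sign -- this is the technical heart of \cite{ChipotFilaShafrir, li-zhu}.

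Granting $v=Q^{-(n-2)/2}$ with $0<Q$ on $\overline{\R^n_+}$ and $Q$ of degree at most two, steps (ii)--(iii) reduce to elementary algebra (we use a flat isometry of $\R^n_+$ freely to normalise $Q$). If $Q$ is affine, positivity on the unbounded set $\overline{\R^n_+}$ together with $\Delta v=v^{q}$ forces $Q(x)=\frac{2}{\sqrt{n(n-2)}}x_n+\alpha$ with $\alpha>0$, i.e.\ $v=v_\alpha$ as in \eqref{lp3}; a direct computation shows that $\partial_\eta v=b\,v^{n/(n-2)}$ then holds precisely when $b=\sqrt{(n-2)/n}$, that is, when $\D_n(0)=1$. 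If instead $Q$ is genuinely quadratic, the interior equation forces the shape $Q(x)=|x-x_0|^2-\beta^2$ with $\beta>0$, i.e.\ $v=b_\beta$ of \eqref{lp4} (the normalising constant being fixed by the equation); positivity of $Q$ on $\overline{\R^n_+}$, evaluated at the origin, reads $\beta^2(\D_n(0)^2-1)>0$, whence $\D_n(0)>1$, while the boundary condition pins down $x_0=x_0(\beta)=-\D_n(0)\beta e_n$, leaving $\beta>0$ free. Collecting the cases: for $\D_n(0)>1$ the solutions are exactly the bubbles \eqref{lp4}; for $\D_n(0)=1$ exactly the one-dimensional $v_\alpha$ of \eqref{lp3}; and for $0<\D_n(0)<1$ neither shape is admissible, so -- together with the case $H(0)\leq0$ above -- \eqref{lp1} has no solution, proving (1). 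Finally, \eqref{lp18} is read off \eqref{lp4} directly, since $|x-x_0(\beta)|^2-\beta^2=|x|^2(1+o(1))$ as $|x|\to+\infty$.
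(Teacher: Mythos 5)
The paper offers no proof of Proposition \ref{lp2}: it is quoted directly from \cite{ChipotFilaShafrir} (see also \cite{li-zhu}), so there is no in-paper argument to compare yours against, and your proposal is doing strictly more than the text does. As a reconstruction of the strategy of the cited references it is sound in outline. The disposal of $H(0)\leq 0$ by even reflection plus the Osserman bound for $\Delta u\geq u^{q}$ is correct (the singular part of the distributional Laplacian of $\tilde v$ is $-2b\,v^{n/(n-2)}\delta_{\{x_n=0\}}\geq 0$ precisely when $b\leq 0$, and Jensen's inequality handles the mollification); the algebra in steps (ii)--(iii) checks out, including the normalising constant $(n(n-2))^{(n-2)/4}\beta^{(n-2)/2}$ and the equivalence between the affine case and $\D_n(0)=1$; and you correctly locate the entire analytic difficulty in step (i), where the lack of decay (the profiles $v_\alpha$ do not decay at all) prevents the usual moving-plane start and forces the moving-sphere dichotomy, whose execution you explicitly defer to \cite{ChipotFilaShafrir,li-zhu} -- exactly as the paper itself does for the whole proposition.

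Two points would need tightening in a full write-up. First, the Li--Zhu calculus lemmas do not literally return ``$v=Q^{-(n-2)/2}$ with $Q$ an arbitrary polynomial of degree at most two''; they return a dichotomy: either the critical radius is $+\infty$ for every boundary centre, in which case $v$ depends only on $x_n$ and a one-dimensional ODE analysis yields \eqref{lp3}, or it is finite for all centres, in which case $v$ is of the inversion-symmetric form underlying \eqref{lp4}. Your phrasing conflates the two branches, and the intermediate possibility of an anisotropic or degenerate quadratic has to be excluded by the interior equation, as you indicate. Second, in step (iii) the order of deductions should be: the boundary condition (together with the tangential translation invariance, which the statement of the proposition tacitly quotients out) places $x_0$ at depth $\D_n(0)\beta$ below the boundary, and only then does positivity of $Q$ at the origin read $\beta^2(\D_n(0)^2-1)>0$, giving $\D_n(0)>1$. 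Neither point affects the correctness of the sketch.
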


	\subsection{Domain-variations}
	
	In order to derive global properties of blowing-up solutions to our problem, we will make use of domain-variations, 
	with calculations that are gathered in this subsection. For variation vector fields of radial type these coincide with the classical 
	Pohozaev identity, but we will need more general ones in Section \ref{s:pf-thm}. We start with some definitions that will be useful here and in further sections of the paper.
	
	\begin{definition} \label{def} 
		
		Given a point $p \in \partial M$, a function $u: M \to \R$ and a vector field $F:M \to TM$, we define $B_p(u,F): \partial M \to \R$,
	\begin{equation} B_p(u,F)=C_n |\cdot -p| \Big ( \left( F \cdot \eta\right)^2-\frac{1}{2} |F|^2 \Big )+2(n-1) u F \cdot \eta.
	\end{equation}
		
If the point $p$ is clear from the notation we will drop the subscript and write simply $B(u,F)$.
	\end{definition}

	\begin{lemma}\label{pi1} Given regular functions $\mathfrak{f},\mathfrak{g}:M\to \R$ and $\mathfrak{h}:\partial M\to \R$, and exponents $0\leq p\leq \frac{n+2}{n-2}$, $0\leq q\leq \frac{n}{n-2}$, consider a positive solution  $u \in C^2(\overline{M})$  of
		\begin{equation}\label{pi2}
				-C_n \Delta u + \mathfrak{g} u = \mathfrak{f} u^p \qquad \mbox{on}\hsp M, 
		\end{equation}
		with $C_n=\frac{4(n-1)}{n-2}$, and $F:M\to TM$ any smooth vector field. Then,
		\begin{align}\label{pi3}
			C_n\int_M DF(\nabla u,\nabla u)-\frac{C_n}{2}\int_M\modgrad{u}\dive{F}-\frac{1}{p+1}\int_M \mathfrak{f} F\cdot\nabla(u^{p+1}) \nonumber\\ +\frac{1}{2}\int_M \mathfrak{g} F\cdot\nabla(u^2) = C_n\int_{\partial M}(\nabla u\cdot F)\frac{\partial u}{\partial \eta} - \frac{C_n}{2}\int_{\partial M}\modgrad{u}F\cdot\eta,
		\end{align}
		where $DF\left(\nabla u,\nabla u\right):=\sum_{k,j=1}^n \nabla_j F^k u_k u^j.$ 
	\end{lemma}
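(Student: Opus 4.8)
The plan is to derive \eqref{pi3} as a standard "multiply by $F\cdot\nabla u$ and integrate by parts" computation, i.e. a generalized Pohozaev–Rellich identity for the vector field $F$. The starting point is to multiply the equation \eqref{pi2} by $F\cdot\nabla u$ and integrate over $M$. This produces three terms: $-C_n\int_M (\Delta u)(F\cdot\nabla u)$, $\int_M \mathfrak{g}\,u\,(F\cdot\nabla u)$, and $-\int_M \mathfrak{f}\,u^p\,(F\cdot\nabla u)$. The last two are immediately rewritten using $u\,(F\cdot\nabla u) = \tfrac12 F\cdot\nabla(u^2)$ and $u^p\,(F\cdot\nabla u) = \tfrac{1}{p+1} F\cdot\nabla(u^{p+1})$, which gives the two "potential" terms in \eqref{pi3} without any further work (no integration by parts is needed on these, since the lemma keeps them in divergence-of-$F$-free form on the left-hand side).

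The main work is the first term. I would integrate by parts once:
\begin{equation*}
-C_n\int_M (\Delta u)(F\cdot\nabla u) = C_n\int_M \nabla u\cdot\nabla(F\cdot\nabla u) - C_n\int_{\partial M}\frac{\partial u}{\partial\eta}(F\cdot\nabla u).
\end{equation*}
The boundary term here is exactly the first term on the right-hand side of \eqref{pi3}. For the interior integrand, expand $\nabla_j(F\cdot\nabla u) = \nabla_j(F^k u_k) = (\nabla_j F^k)u_k + F^k\nabla_j u_k$, so that
\begin{equation*}
\nabla u\cdot\nabla(F\cdot\nabla u) = \sum_{j,k}u^j(\nabla_j F^k)u_k + \sum_{j,k}F^k u^j\nabla_j u_k = DF(\nabla u,\nabla u) + \tfrac12\, F\cdot\nabla(|\nabla u|^2),
\end{equation*}
where in the second sum I used the symmetry of the Hessian, $\nabla_j u_k = \nabla_k u_j$, to recognize $\sum_{j,k}F^k u^j\nabla_k u_j = \tfrac12 F\cdot\nabla(|\nabla u|^2)$. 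The term $DF(\nabla u,\nabla u)$ is precisely the first term in \eqref{pi3}. It remains to handle $\tfrac{C_n}{2}\int_M F\cdot\nabla(|\nabla u|^2)$: integrate by parts once more,
\begin{equation*}
\frac{C_n}{2}\int_M F\cdot\nabla(|\nabla u|^2) = -\frac{C_n}{2}\int_M |\nabla u|^2\,\dive F + \frac{C_n}{2}\int_{\partial M}|\nabla u|^2\,F\cdot\eta,
\end{equation*}
which produces the remaining interior term $-\tfrac{C_n}{2}\int_M|\nabla u|^2\dive F$ (with the correct sign, after moving it to the left) and the second boundary term $-\tfrac{C_n}{2}\int_{\partial M}|\nabla u|^2 F\cdot\eta$ in \eqref{pi3}. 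Collecting all pieces and rearranging gives the identity.

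The computation is essentially routine; the one place requiring a little care is the bookkeeping of signs and the correct use of the symmetry of the Hessian in the step identifying $\sum_{j,k}F^k u^j\nabla_j u_k$ with $\tfrac12 F\cdot\nabla(|\nabla u|^2)$ — on a Riemannian manifold this is the statement that $\nabla^2 u$ is a symmetric $2$-tensor, which holds since $\nabla$ is the Levi-Civita connection (and one must be slightly careful that the "divergence" and "gradient" here are the Riemannian ones, so no curvature terms appear because we only commute covariant derivatives of a scalar function). No regularity issues arise since $u\in C^2(\overline M)$ and $F$ is smooth, so all the integrations by parts (Green's identities) are justified with the stated boundary terms. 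I would also remark that when $F$ is the conformal/radial field $F = |x-p|\,\partial_r$ in normal coordinates, \eqref{pi3} reduces to the classical Pohozaev identity used elsewhere in the paper, which serves as a consistency check.
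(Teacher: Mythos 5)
Your proposal is correct and is essentially the paper's own argument: the paper multiplies \eqref{pi2} by $\nabla u\cdot F$ and applies the divergence theorem to $\mathbb{Y}=(\nabla u\cdot F)\nabla u-\tfrac12\modgrad{u}F$, whose divergence identity encodes exactly your two integrations by parts (including the use of the symmetry of the Hessian to produce $\tfrac12 F\cdot\nabla\modgrad{u}$). The signs and boundary terms in your computation all check out against \eqref{pi3}.
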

	\begin{proof}
		Let us introduce the vector field $\mathbb{Y}=\left(\nabla u\cdot F\right)\nabla u-\frac{1}{2}\modgrad{u}F$: a direct computation shows that
		\begin{align}
			\dive \mathbb Y = \sum_{j=1}^n\nabla_j \mathbb{Y}^j = \sum_{k,j=1}^n \nabla_j F^k u_k u^j+\left(\nabla u\cdot F\right)\Delta u - \frac{1}{2}\modgrad{u}\dive F. \label{pi11}
		\end{align}
		Finally, multiply \eqref{pi2} by $\nabla u\cdot F$ and integrate by parts, using \eqref{pi11}.
	\end{proof}

	\begin{corollary}\label{pi4} Consider the domain $\Omega = B(0,r)^+\subset \R^n_+$, and let $u$ be as in Lemma \ref{pi1}. Define:
		\begin{align}\label{pi8}
			\mathfrak{P}_\Omega(u)=&\frac{1}{p+1}\int_\Omega u^{p+1}X\cdot\nabla \mathfrak{f} + \left(\frac{n}{p+1}-\frac{n-2}{2}\right)\int_\Omega \mathfrak{f} u^{p+1}\nonumber\\&-\frac{1}{2}\int_\Omega u^2 X\cdot \nabla \mathfrak{g} -\int_\Omega \mathfrak{g} u^2 +\frac{r}{2}\int_{\partial^+\Omega} \mathfrak{g} u^2-\frac{r}{p+1}\int_{\partial^+\Omega} \mathfrak{f} u^{p+1}.\end{align}
		Then, if $u$ solves also $	\frac{2}{n-2}\frac{\partial u}{\partial \eta} = \mathfrak{h} u^q$ on $\partial M$, 
		we have that 
		\begin{align}\label{pi9}
			\mathfrak{P}_\Omega(u)=&\int_{\partial^+ \Omega} B(u,\nabla u) + 2(n-1)\left(\frac{n-2}{2}-\frac{n-1}{q+1}\right)\int_{\partial_0\Omega}\mathfrak{h}u^{q+1} \nonumber\\&+ \frac{2(n-1)}{q+1}\int_{\partial(\partial_0\Omega)} \mathfrak{h}u^{q+1}(X\cdot\nu)  -\frac{2(n-1)}{q+1}\int_{\partial_0\Omega}u^{q+1}(\nabla \mathfrak{h}\cdot X) ,
		\end{align}

	\end{corollary}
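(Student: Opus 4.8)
The plan is to specialize the general domain-variation identity of Lemma \ref{pi1} to the half-ball $\Omega = B(0,r)^+$ with the vector field $F = X$ (the position field $x \mapsto x$), and then to compute each of the four boundary integrals on the right-hand side of \eqref{pi3} plus the boundary contribution $\partial_0\Omega$ coming from the Neumann condition. Since $F = X$, we have $DF = \mathrm{Id}$, so $DF(\nabla u,\nabla u) = \modgrad{u}$ and $\dive F = n$; thus the first two terms in \eqref{pi3} combine to $C_n(1 - \tfrac{n}{2})\int_\Omega \modgrad{u}$. To eliminate the bulk gradient term I would then invoke the PDE \eqref{pi2} once more: multiplying it by $u$ and integrating by parts gives $C_n\int_\Omega \modgrad{u} = \int_\Omega \mathfrak{f} u^{p+1} - \int_\Omega \mathfrak{g} u^2 + C_n\int_{\partial\Omega} u\,\tfrac{\partial u}{\partial\eta}$, which lets us trade the Dirichlet-energy term for curvature integrals plus boundary terms, and this is where the $\mathfrak{h} u^{q+1}$ contributions enter via the Neumann condition on $\partial_0\Omega$.

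Next I would handle the two bulk ``drift'' terms $\frac{1}{p+1}\int_\Omega \mathfrak{f}\, X\cdot\nabla(u^{p+1})$ and $\frac12\int_\Omega \mathfrak{g}\, X\cdot\nabla(u^2)$ by integrating by parts in the $X\cdot\nabla$ direction: for a generic scalar $w$ one has $\int_\Omega \mathfrak{a}\, X\cdot\nabla w = -\int_\Omega w\,\dive(\mathfrak{a} X) + \int_{\partial\Omega} \mathfrak{a}\, w\, (X\cdot\nu)$, and $\dive(\mathfrak{a} X) = n\mathfrak{a} + X\cdot\nabla\mathfrak{a}$. Applying this to $w = u^{p+1}$ with $\mathfrak{a} = \mathfrak{f}$, and to $w = u^2$ with $\mathfrak{a} = \mathfrak{g}$, produces exactly the terms $\frac{1}{p+1}\int_\Omega u^{p+1} X\cdot\nabla\mathfrak{f}$, $\frac{n}{p+1}\int_\Omega \mathfrak{f} u^{p+1}$, $-\frac12\int_\Omega u^2 X\cdot\nabla\mathfrak{g}$, $-\frac{n}{2}\int_\Omega \mathfrak{g} u^2$ together with boundary integrals over $\partial^+\Omega$ (where $X\cdot\nu = r$) and over $\partial_0\Omega$ (where $X$ is tangent, so $X\cdot\eta = 0$ and that piece drops out). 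Collecting the bulk curvature coefficients, the $\mathfrak{f} u^{p+1}$ terms assemble into $\big(\tfrac{n}{p+1} - \tfrac{n-2}{2}\big)\int_\Omega \mathfrak{f} u^{p+1}$ and the $\mathfrak{g} u^2$ terms into $-\int_\Omega \mathfrak{g} u^2$ exactly as in the definition of $\mathfrak{P}_\Omega(u)$ in \eqref{pi8}, while the $\partial^+\Omega$ boundary integrals reproduce the terms $\tfrac{r}{2}\int_{\partial^+\Omega}\mathfrak{g} u^2 - \tfrac{r}{p+1}\int_{\partial^+\Omega}\mathfrak{f} u^{p+1}$.

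It remains to identify the right-hand side. On $\partial^+\Omega$ the outer normal is $\eta = X/r$, so the two terms $C_n\int_{\partial^+\Omega}(\nabla u\cdot F)\tfrac{\partial u}{\partial\eta} - \tfrac{C_n}{2}\int_{\partial^+\Omega}\modgrad{u} F\cdot\eta$ plus the leftover $r$-weighted curvature boundary terms from the previous step should be massaged, using $F = X = r\eta$ on $\partial^+\Omega$, into $C_n |\cdot| \big((\nabla u\cdot\eta)^2 - \tfrac12\modgrad{u}\big)$; combining this with the boundary contribution $C_n\int_{\partial\Omega} u\,\tfrac{\partial u}{\partial\eta}$ from the second use of the PDE, restricted to $\partial^+\Omega$ and recalling $\tfrac{\partial u}{\partial\eta} = \nabla u \cdot \eta$ there, yields precisely $B(u,\nabla u) = C_n|\cdot|\big((F\cdot\eta)^2 - \tfrac12|F|^2\big) + 2(n-1) u F\cdot\eta$ evaluated with $F = \nabla u$, so the first integral $\int_{\partial^+\Omega} B(u,\nabla u)$ appears. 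On $\partial_0\Omega$ we substitute the Neumann condition $\tfrac{\partial u}{\partial\eta} = \tfrac{n-2}{2}\mathfrak{h} u^q$ into the term $C_n\int_{\partial_0\Omega} u\,\tfrac{\partial u}{\partial\eta}$, giving $2(n-1)\int_{\partial_0\Omega}\mathfrak{h} u^{q+1}$; since $X$ is tangent to $\partial_0\Omega$ we can then integrate this by parts along the $(n-1)$-dimensional manifold $\partial_0\Omega$ using $\dive_T(\mathfrak{h} u^{q+1} X) = (n-1)\mathfrak{h} u^{q+1} + u^{q+1} X\cdot\nabla\mathfrak{h} + \tfrac{\mathfrak{h}}{q+1} X\cdot\nabla(u^{q+1})$ — wait, more carefully $X\cdot\nabla(u^{q+1}) = (q+1)u^q X\cdot\nabla u$, and after the tangential integration by parts the divergence-theorem boundary term lives on $\partial(\partial_0\Omega)$ with normal $\nu$ — producing the remaining three terms: the coefficient $2(n-1)\big(\tfrac{n-2}{2} - \tfrac{n-1}{q+1}\big)\int_{\partial_0\Omega}\mathfrak{h} u^{q+1}$, the edge term $\tfrac{2(n-1)}{q+1}\int_{\partial(\partial_0\Omega)}\mathfrak{h} u^{q+1}(X\cdot\nu)$, and the drift term $-\tfrac{2(n-1)}{q+1}\int_{\partial_0\Omega} u^{q+1}(\nabla\mathfrak{h}\cdot X)$.

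The main obstacle I anticipate is bookkeeping rather than conceptual: one must track the several boundary strata ($\partial^+\Omega$, $\partial_0\Omega$, and the corner $\partial(\partial_0\Omega) = \partial^+\Omega \cap \partial_0\Omega$) consistently, be careful that on $\partial_0\Omega$ the position field $X$ has no normal component so the purely geometric $B$-terms there vanish while the $\mathfrak{h}$-terms survive, and verify that the numerical coefficients from the two separate integrations-by-parts (the $X\cdot\nabla$ one in the bulk and the tangential one on $\partial_0\Omega$) combine to exactly the constants displayed in \eqref{pi8}–\eqref{pi9}. A secondary subtlety is that Lemma \ref{pi1} is stated for a manifold $M$ with the Levi-Civita connection, whereas Corollary \ref{pi4} is on the flat half-space $\R^n_+$, so here $\nabla$ is the ordinary gradient, $DF$ is the ordinary Jacobian, and all curvature corrections drop — this is what makes $F = X$ give cleanly $DF = \mathrm{Id}$ and $\dive F = n$. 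Once these are in place the identity \eqref{pi9} follows by simply equating the reorganized left- and right-hand sides of \eqref{pi3}.
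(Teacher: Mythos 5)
Your proposal follows essentially the same route as the paper's proof: apply Lemma \ref{pi1} with $F=X$ (so $DF=\mathrm{Id}$, $\dive X=n$), integrate the drift terms $\int_\Omega \mathfrak{f}\,X\cdot\nabla(u^{p+1})$ and $\int_\Omega \mathfrak{g}\,X\cdot\nabla(u^2)$ by parts, test the equation against $u$ to eliminate the Dirichlet energy, then split $\partial\Omega$ into $\partial^+\Omega$ (where $X=r\eta$ assembles into $B(u,\nabla u)$) and $\partial_0\Omega$ (where $X\cdot\eta=0$, the Neumann condition is substituted, and a tangential integration by parts produces the $\mathfrak{h}$-terms and the edge term on $\partial(\partial_0\Omega)$). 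The bookkeeping you outline matches the paper's \eqref{pi12}--\eqref{pi19}, so the argument is correct.
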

	\begin{proof} We first apply Lemma \ref{pi1} with $F=X$, taking into account that $\dive X = n$ and $DX(\nabla u,\nabla u)=\modgrad{u}$. We obtain:
		\begin{align}
			-2(n-1)\int_\Omega\modgrad{u}&-\frac{1}{p+1}\int_\Omega \mathfrak{f} X\cdot\nabla(u^{p+1})+\frac{1}{2}\int_\Omega \mathfrak{g}X\cdot\nabla(u^2) \nonumber\\\label{pi12} &=C_n\int_{\partial\Omega}(\nabla u\cdot X)(\nabla u\cdot\eta)-\frac{C_n}{2}\int_{\partial\Omega}\modgrad{u}(X\cdot \eta).
		\end{align}
		Moreover, using the Divergence Theorem,
		\begin{align}
			\int_\Omega \mathfrak{f}X\cdot\nabla(u^{p+1})&=\int_{\partial \Omega}\mathfrak{f}u^{p+1}X\cdot \eta - \int_\Omega u^{p+1}X\cdot \nabla \mathfrak{f} - n\int_\Omega \mathfrak{f}u^{p+1}; \label{pi13}\\
			\int_\Omega gX\cdot\nabla(u^{2})&=\int_{\partial \Omega} \mathfrak{g}u^{2}X\cdot \eta - \int_\Omega u^{2}X\cdot \nabla \mathfrak{g} - n\int_\Omega \mathfrak{g}u^{p+1}. \label{pi14}
		\end{align}
		Multiplying \eqref{pi2} by $u$ and integrating by parts we can relate the last two terms, namely 
		\begin{equation}\label{pi15}
			\frac{n-2}{2}\int_\Omega \mathfrak{f}u^{p+1}-\frac{n-2}{2}\int_\Omega \mathfrak{g}u^2 = 2(n-1)\int_\Omega\modgrad{u}-2(n-1)\int_{\partial\Omega}u\frac{\partial u}{\partial\eta}.
		\end{equation}
		Combining \eqref{pi13} and \eqref{pi14} with \eqref{pi15} and pugging them into \eqref{pi12}, we obtain:
		\begin{align}\frac{-1}{p+1}\int_{\partial \Omega}\mathfrak{f}u^{p+1}X\cdot \eta + \frac{1}{p+1}\int_\Omega u^{p+1}X\cdot \nabla \mathfrak{f} -\frac{1}{2}\int_\Omega u^2 X\cdot\nabla \mathfrak{g} \nonumber\\+\frac{1}{2}\int_{\partial \Omega} \mathfrak{g}u^2 X\cdot \eta-\int_\Omega \mathfrak{g}u^2 +\left(\frac{n}{p+1}-\frac{n-2}{2}\right)\int_\Omega \mathfrak{f}u^{p+1}\nonumber \\ = C_n\int_{\partial\Omega} (\nabla u\cdot X)\frac{\partial u}{\partial \eta}-\frac{C_n}{2}\int_{\partial \Omega}\modgrad{u}(X\cdot\eta)+2(n-1)\int_{\partial \Omega}u\frac{\partial u}{\partial \eta}.\label{pi16}
		\end{align}
		Taking into account that the exterior normal vector to $\partial\Omega$ satisfies
		\begin{align*}
			\eta(x)=\left\lbrace\begin{array}{ll} \frac{x}{r} & \mbox{if}\hsp x\in \partial^+\Omega, \\ -e_n & \mbox{if}\hsp x\in\partial_0\Omega, \end{array}\right.
		\end{align*}
		we proceed to study the right-hand side of \eqref{pi16}. Integrating by parts we have
		\begin{align*}
			&\int_{\partial_0\Omega}\frac{\partial u}{\partial\eta} (X\cdot\nabla u)= \frac{n-2}{2(q+1)}\int_{\partial_0\Omega} \mathfrak{h} X\cdot\nabla(u^{q+1}) \nonumber\\&=
			\frac{n-2}{2(q+1)}\left(\int_{\partial(\partial_0\Omega)} \mathfrak{h} u^{q+1}(X\cdot\nu)-\int_{\partial_0\Omega}u^{q+1}(X\cdot\nabla \mathfrak{h})-(n-1)\int_{\partial_0\Omega}\mathfrak{h}u^{q+1}\right).
		\end{align*}
		Therefore,
		\begin{align}
			C_n \int_{\partial\Omega} (X\cdot\nabla u)\frac{\partial u}{\partial \eta} &= C_n\int_{\partial^+\Omega}r\left(\frac{\partial u}{\partial \eta}\right)^2+C_n\int_{\partial_0\Omega}(X\cdot\nabla u)\frac{\partial u}{\partial \eta} \nonumber\\ &= C_n\int_{\partial^+\Omega}r\left(\frac{\partial u}{\partial \eta}\right)^2 + \frac{2(n-1)}{q+1}\int_{\partial(\partial_0\Omega)}\mathfrak{h}u^{q+1}(X\cdot\nu) \nonumber\\ &-2(n-1)\int_{\partial_0\Omega}u^{q+1}(X\cdot\nabla \mathfrak{h})-\frac{2(n-1)^2}{q+1}\int_{\partial_0\Omega}\mathfrak{h}u^{q+1}.\label{pi17}
		\end{align}
		In addition,
		\begin{equation}
			\frac{-C_n}{2}\int_{\partial\Omega}\modgrad{u}(X\cdot \eta) = -\frac{C_n}{2}r\int_{\partial^+\Omega}\modgrad{u} \label{pi18}\end{equation} 	
		\begin{equation} 2(n-1)\int_{\partial\Omega}u\frac{\partial u}{\partial\eta} = 2(n-1)\int_{\partial^+\Omega}u\frac{\partial u}{\partial\eta}+(n-1)(n-2)\int_{\partial_0\Omega}\mathfrak{h}u^{q+1}.\label{pi19}
		\end{equation} 
		Finally, notice also that, by \eqref{pi2}
		\begin{equation*}
			\frac{n-2}{2}\int_{\partial_0\Omega}\mathfrak{h}u^{q+1} = \int_{\partial_0\Omega}u\frac{\partial u}{\partial \eta}.
		\end{equation*}
		Identity \eqref{pi9} is a consequence of \eqref{pi16}, \eqref{pi17}, \eqref{pi18} and \eqref{pi19}.
	\end{proof}
	As a final goal of this section, we study a particular case of Corollary \ref{pi4} that will be useful later on.
	
	\
	
	For any fixed constant $a>0$, define the function $G(x)=a\abs{x}^{2-n}$: direct computations show that
	\begin{equation*}
		B(G,\nabla G) = -2(n-1)\left(\frac{(n-2)a^2}{|x|^{2n-3}}-2\frac{(n-2)a^2}{|x|^{2n-3}}+\frac{(n-2)a^2}{|x|^{2n-3}}\right)=0.
	\end{equation*}
	Furthermore, we have the following result:
	\begin{proposition}\label{pri11} Define the function $h:B(r)_+\to \R$ as 
		\begin{equation*}
			h(x) = G(x)+b(x),
		\end{equation*}
		for any function $b\in C^1(\overline{B(r)_+})$. Then,
		\begin{equation}\label{pi10}
			\lim_{r\to 0}\int_{\partial^+\Omega}B(h,\nabla h) = -(n-1)(n-2)\omega_{n-1}a \, b(0).
		\end{equation}
		\begin{proof}
			The fact that $B(G,\nabla G)=0$ implies 
			\begin{equation*}
				B(h,\nabla h) = B_{r}(b,\nabla b)-\frac{6a(n-1)}{r^{n-1}}(X\cdot\nabla b)-\frac{2a(n-1)(n-2)}{r^{n-1}}b(x).
			\end{equation*}
			Integrating on $\partial^+B_r$, and taking into account that $\abs{\partial^+ B_r}=\frac{1}{2}\omega_{n-1}r^{n-1}$, we obtain:
			\begin{align*}
				\int_{\partial^+B_r}B(h,\nabla h) &= \int_{\partial^+B_r}B(b,\nabla b) - 3a(n-1)\omega_{n-1}\fint_{\partial^+B_r} X\cdot \nabla b \\ &-a(n-1)(n-2)\omega_{n-1}\fint_{\partial^+B_r}b.
			\end{align*}
			We conclude by taking the limit  $r\to 0$.
		\end{proof}
	\end{proposition}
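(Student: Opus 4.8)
The plan is to use the identity $B(G,\nabla G)=0$ recorded just above, together with the observation that, at each point, $B_p(u,F)$ is a quadratic form in the pair $(u,F)$: the term $C_n|\cdot-p|\big((F\cdot\eta)^2-\tfrac12|F|^2\big)$ is quadratic in $F$ alone, while $2(n-1)\,u\,F\cdot\eta$ is bilinear in $(u,F)$. Hence, writing $h=G+b$ and $\nabla h=\nabla G+\nabla b$, polarization gives
\[
B(h,\nabla h)=B(G,\nabla G)+B(b,\nabla b)+\mathcal{C}(G,b),
\]
where $\mathcal{C}(G,b)$ collects the mixed terms (bilinear in $(G,\nabla G)$ and $(b,\nabla b)$). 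Since $B(G,\nabla G)=0$ vanishes identically, it remains only to control $B(b,\nabla b)$ and $\mathcal{C}(G,b)$ on $\partial^+B_r$, where $\eta=x/r$ and $|x|\equiv r$.

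The next step is to evaluate $\mathcal{C}(G,b)$ explicitly on $\partial^+B_r$ using $\nabla G(x)=a(2-n)|x|^{-n}x$. On that hemisphere one has $\nabla G\cdot\eta=a(2-n)r^{1-n}$, $\nabla b\cdot\eta=r^{-1}(X\cdot\nabla b)$, and $\nabla G\cdot\nabla b=a(2-n)r^{-n}(X\cdot\nabla b)$, so, together with $C_n(2-n)=-4(n-1)$, a direct computation yields
\[
\mathcal{C}(G,b)=\frac{a(n-1)}{r^{n-1}}\Big(c_1\,(X\cdot\nabla b)+c_2\,(n-2)\,b\Big)
\]
for explicit numerical constants $c_1,c_2$; in particular the $b$-term comes only from the contribution $2(n-1)\,b\,\nabla G\cdot\eta$, which gives $c_2=-2$. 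The crucial feature is that the $r$-dependence of every singular term here is exactly the power $r^{1-n}$.

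Finally I would integrate over $\partial^+B_r$ and let $r\to0$. Since $|\partial^+B_r|=\tfrac12\omega_{n-1}r^{n-1}$, rewriting each integral as (area)$\times$(average) cancels $r^{1-n}$ precisely, so that
\[
\int_{\partial^+B_r}B(h,\nabla h)=\int_{\partial^+B_r}B(b,\nabla b)+\tfrac{c_1}{2}a(n-1)\omega_{n-1}\fint_{\partial^+B_r}(X\cdot\nabla b)+\tfrac{c_2}{2}a(n-1)(n-2)\omega_{n-1}\fint_{\partial^+B_r}b.
\]
Because $b\in C^1$ near the origin, $B(b,\nabla b)$ stays bounded on $\partial^+B_r$, so its integral is $O(r^{n-1})\to0$; moreover $X\cdot\nabla b(x)=x\cdot\nabla b(x)\to0$ as $x\to0$, so the middle average vanishes in the limit; and $\fint_{\partial^+B_r}b\to b(0)$ by continuity. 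This leaves $\tfrac{c_2}{2}a(n-1)(n-2)\omega_{n-1}b(0)=-(n-1)(n-2)\omega_{n-1}a\,b(0)$, which is \eqref{pi10}. The only point requiring genuine care is this bookkeeping of singularities: one must check that no mixed term is worse than $r^{1-n}$, and that the quadratic remainder $B(b,\nabla b)$ is in fact bounded — its apparently singular piece $C_n|x|(\nabla b\cdot\eta)^2=C_n r^{-1}(X\cdot\nabla b)^2$ is $O(r)$ since $|X\cdot\nabla b|\le r|\nabla b|$. Everything else reduces to continuity of $b$ and the elementary value of $|\partial^+B_r|$.
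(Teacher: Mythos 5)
Your proposal is correct and follows essentially the same route as the paper's proof: expand $B(h,\nabla h)$ by bilinearity using $B(G,\nabla G)=0$, observe that every cross term on $\partial^+B_r$ scales exactly like $r^{1-n}$, cancel this against $|\partial^+B_r|=\tfrac12\omega_{n-1}r^{n-1}$, and pass to the limit using continuity of $b$ and $\nabla b$. Leaving the coefficient $c_1$ of the $(X\cdot\nabla b)$ term unspecified is harmless since that average vanishes as $r\to 0$, and your identification of the surviving coefficient $c_2=-2$ from $2(n-1)\,b\,\nabla G\cdot\eta$ is exactly what produces the constant in \eqref{pi10}.
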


	\subsection{An explicit example of blow-up with infinite singular set} 
	
	Here we show that the cardinality of the singular set $\cS_0 = \cS \cap \{\mathfrak{D}_n=1 \} $ can be infinite. Indeed, consider $\rho>1$ and the function $u_{\rho}: B(0,1) \subset \R^n \to \R$ as:
	
	$$ u_\rho(x)= \Big (\frac{2 \rho}{\rho^2- |x|^2}  \Big )^{\frac{n-2}{2}}.$$
	
	It is easy to check that $u_\rho$ solves:
	
	\begin{align}
		\left\lbrace \begin{array}{ll}-\frac{4(n-1)}{n-2}\Delta u = - n(n-1) u^{\frac{n+2}{n-2}} &\mbox{on } B(0,1), \\[0.2cm]
			\frac{2}{n-2}\frac{\partial u}{\partial\eta}+ u = H_\rho u^{\frac{n}{n-2}} &\mbox{on } \partial B(0,1),\end{array}\right. 
	\end{align}
	where $H_\rho= \frac{\rho^2+1}{2\rho}$. As $\rho \to 1$, $H_\rho \to 1$ but $K= -n(n-1)$. In this case, the function $u_\rho$ diverges on the whole boundary $\partial B(0,1) = \cS_0$.

	Obseve that the function $u$ gives rise to a model of the Hyperbolic space in a ball of radius $\rho >1$, and hence $H_\rho$ is nothing but the mean curvature of the sphere of Euclidean radius 1 in such a model.

	\section{The variational study of the energy functional} \setcounter{equation}{0} \label{s:variational}
	
	In this section we analyze  the geometric properties of the energy functional in \eqref{eq3}. This study will readily imply the proof of Theorems \ref{minimum1} and \ref{minimum2}. Moreover, we will show that under the assumptions of Theorem \ref{minmax}, $I$ satisfies the  hypotheses of the mountain pass lemma. However, the proof of Theorem \ref{minmax} will require the compactness result of Theorem \ref{compactness}, which will be proved further on.

	\medskip We begin with the following inequality,  showing that the nature of the functional is ruled by the interaction between its critical terms. This will allow us to prove Theorems \ref{minimum1} and \ref{minimum2}.
	
	\begin{proposition} \label{traceineq} For every $\e>0$ there exists $C>0$ such that
		
		\begin{align}\label{ti2}
			\int_{\partial M}H |u|^\dsh \leq (\bar{D}+ \e) \left(\frac{2(n-1)}{(n-2)^2}\int_M \modgrad{u}+\frac{1}{2n}\int_M\vert K \vert |u|^\dst \right) + C \int_{M} |u|^{\dsh},
		\end{align}
		where $\bar{D}= \max_{x \in \partial M}\{0, \D_n(x) \}$.
	\end{proposition}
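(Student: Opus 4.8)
To prove Proposition~\ref{traceineq} I would localise the boundary integral in a collar of $\partial M$ by the divergence theorem, and then apply Young's inequality with a point‑dependent parameter tuned precisely so as to reproduce the constant $\mathfrak{D}_n$. Two reductions come first. Since $\int_{\partial M}H|u|^{\dsh}\le\int_{\partial M}H^+|u|^{\dsh}$ with $H^+:=\max\{H,0\}$, and since on $\{H^+>0\}$ one has $\mathfrak{D}_n=\sqrt{n(n-1)}\,H/\sqrt{|K|}\le\bar D$, it suffices to estimate $\int_{\partial M}H^+|u|^{\dsh}$. Moreover every integral in \eqref{ti2} is continuous on $H^1(M)$ (via $H^1(M)\emb L^{\dst}(M)$ and the trace embedding $H^1(M)\emb L^{\dsh}(\partial M)$), so by density I may assume $u\in C^\infty(\overline M)$ and $u\ge 0$. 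The one arithmetic fact I shall use is $2(\dsh-1)=\dst$, i.e.\ $u^{\dsh-1}=u^{\dst/2}$.

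Next, fix $\delta_0>0$ small enough that $t(x):=\dist(x,\partial M)$ is smooth on the collar $M_{2\delta_0}:=\{t<2\delta_0\}$; let $\pi$ be the nearest‑point projection onto $\partial M$, let $\zeta\in C^\infty(M)$ be a cutoff with $\zeta\equiv1$ on $M_{\delta_0}$ and $\supp\zeta\subset M_{2\delta_0}$, and set $X:=-\zeta\,\nabla t$, a Lipschitz vector field with $|X|\le1$, $\supp X\subset M_{2\delta_0}$ and $X=\eta$ on $\partial M$. Then
\[
\int_{\partial M}H^+|u|^{\dsh}=\int_{\partial M}\big\langle (H^+\!\circ\pi)\,|u|^{\dsh}\,X,\eta\big\rangle=\int_{M}\dive\!\big((H^+\!\circ\pi)\,|u|^{\dsh}\,X\big),
\]
and expanding the divergence produces three terms. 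The term carrying $\nabla(H^+\!\circ\pi)$ and the term carrying $\dive X$ are each bounded pointwise by $C\,|u|^{\dsh}$ — using that $H^+\!\circ\pi$ is Lipschitz and constant along normals, and that $X$ is a fixed Lipschitz field once $\delta_0$ is frozen — so together they contribute at most $C(\delta_0)\int_M|u|^{\dsh}$. The remaining, main term is controlled by $\dsh\,(H^+\!\circ\pi)\,|u|^{\dsh-1}|\nabla u|=\dsh\,(H^+\!\circ\pi)\,|u|^{\dst/2}|\nabla u|$.

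The crux is estimating this main term. On $M_{2\delta_0}$ I apply $ab\le\tfrac12(\lambda a^2+\lambda^{-1}b^2)$ with $a=|\nabla u|$, $b=|u|^{\dst/2}$ and the weight $\lambda(x)=\dfrac{2\sqrt{n(n-1)}}{(n-2)\sqrt{|K(\pi(x))|}}$, which is legitimate since $K<0$ on the compact $\overline M$. A one–line computation using $\dsh=\tfrac{2(n-1)}{n-2}$ shows that the two resulting coefficients are exactly $\tfrac{2(n-1)}{(n-2)^2}\,\mathfrak{D}_n^+(\pi(x))$ and $\tfrac1{2n}\,\mathfrak{D}_n^+(\pi(x))\,|K(\pi(x))|$, where $\mathfrak{D}_n^+:=\max\{\mathfrak{D}_n,0\}$; hence, pointwise,
\[
\dsh\,(H^+\!\circ\pi)\,|u|^{\dst/2}|\nabla u|\ \le\ \mathfrak{D}_n^+(\pi(x))\Big(\tfrac{2(n-1)}{(n-2)^2}\modgrad{u}+\tfrac1{2n}\,|K(\pi(x))|\,|u|^{\dst}\Big).
\]
Since $\mathfrak{D}_n^+(\pi(x))\le\bar D$ everywhere, and since uniform continuity of $K$ together with $|K|\ge\min_{\overline M}|K|>0$ yields $|K(\pi(x))|\le(1+\omega(\delta_0))\,|K(x)|$ on $M_{2\delta_0}$ with $\omega(\delta_0)\to0$, integrating over $M$ (the integrand being supported in $M_{2\delta_0}$) and adding the lower‑order contributions gives
\[
\int_{\partial M}H|u|^{\dsh}\ \le\ \bar D\,(1+\omega(\delta_0))\Big(\tfrac{2(n-1)}{(n-2)^2}\int_M\modgrad{u}+\tfrac1{2n}\int_M|K|\,|u|^{\dst}\Big)+C(\delta_0)\int_M|u|^{\dsh}.
\]

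To conclude, given $\e>0$ I choose $\delta_0$ so small that $\bar D\,\omega(\delta_0)\le\e$ (vacuous when $\bar D=0$) and take $C=C(\delta_0)$; a density argument then removes the extra regularity of $u$, proving \eqref{ti2}. I do not expect a genuine obstacle: the computation is elementary once one has spotted the right Young weight $\lambda(x)$, which is exactly what makes the constant equal to the \emph{sharp} value $\bar D$ rather than some larger multiple of it. The only point requiring care is the bookkeeping of signs, which is why one must pass to $H^+$ and $\mathfrak{D}_n^+$ — note that $\bar D$ does \emph{not} dominate $\sqrt{n(n-1)}|H|/\sqrt{|K|}$ on the set $\{H<0\}$, so discarding the negative part of $H$ at the outset is essential.
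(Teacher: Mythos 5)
Your proof is correct and follows essentially the same route as the paper: convert the boundary integral to interior integrals via the divergence theorem applied to a vector field extending the outer normal, then apply Young's inequality with the weight calibrated to $\sqrt{|K|}$ so that the constant comes out as $\mathfrak{D}_n$. The only differences are cosmetic — you use the nearest-point projection and a single cutoff where the paper uses a partition of unity with local sup/inf of $H$ and $|K|$, and you are somewhat more careful than the paper about discarding the set $\{H<0\}$ at the outset.
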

	
	\begin{proof}
		
		Take a partition of  unity $\{\phi_j\}_{j=1}^m$  on $M$,  a vector field $N\in\mathfrak{X}(M)$ on $M$ with $\vert N\vert \leq 1$ and such that $N=\eta$ on $\partial M$. Firstly we see that, for every $1\leq j\leq m$ and $u\in H^1(M)$
		
		\begin{equation*}
			\int_{\partial M} \phi_j |u|^\dsh = \int_{\partial M}\phi_j |u|^\dsh N\cdot\eta.
		\end{equation*}
		
		Thus, by the Divergence Theorem
		\begin{align}\label{ti1}
			\int_{\partial M} \phi_j |u|^\dsh &= \int_M \{\phi_j \dv{N}+\nabla \phi_j N\}|u|^\dsh + \frac{2(n-1)}{n-2}\int_M \phi_j \nabla u \cdot N |u|^{\frac{2}{n-2}} u\nonumber \\ &\leq C \int_M |u|^\dsh + \frac{2(n-1)}{n-2}\int_M \phi_j |\nabla u| \cdot |u|^{\frac{n}{n-2}}.
		\end{align}
		
		Consider $\D_n$ as defined in \eqref{def Dn}, and let $H_j = \max\{H(x), x \in \supp\ \phi_j\}$, $|K|_j = \min\{|K(x)|, x \in \supp\ \phi_j\}$. Then, summing \eqref{ti1} on $j$ and assuming that the supports of the $\phi_j$'s are sufficiently small, we have:
		\begin{align*}
			(n-2)\int_{\partial M}H|u|^\dsh = (n-2)\sum_{j=1}^m \int_{\partial M}\phi_j H |u|^\dsh \\ \leq (n-2)\sum_{j=1}^m H_j \int_{\partial M}\phi_j |u|^\dsh \leq C(n-2) \Vert H\Vert_\infty \int_M |u|^\dsh \\ +2(n-1)\left(\sum_{j=1}^m \frac{H_j}{\sqrt{|K|_j}}\right) \int_M \vert \nabla u\vert |u|^{\frac{n}{n-2}}\sqrt{\vert K \vert} \\ \leq C' \int_M |u|^\dsh + 2\sqrt{\frac{n-1}{n}}(\bar{D}+ \varepsilon) \int_M \vert \nabla u\vert |u|^{\frac{n}{n-2}}\sqrt{\vert K \vert} \\ \leq C'\int_M |u|^\dsh + 2\sqrt{\frac{n-1}{n}}(\bar{D} + \e) \left(\frac{\lambda}{2}\int_M \modgrad{u} + \frac{1}{2\lambda} \int_M |u|^\dst \vert K\vert\right),
		\end{align*}
		for every $\lambda>0$. 
		Choosing $\lambda = \frac{2\sqrt{n(n-1)}}{n-2}$, and renaming $\e$ properly, we conclude. 
	\end{proof}
	
	\subsection{Proof of Theorems \ref{minimum1} and \ref{minimum2}}
	
	We mainly rely on the inequality proven in Proposition \ref{traceineq}. If $\bar{D} <1$, we can use \eqref{ti2} taking into account that by H\"{o}lder's inequality	
	$$ \int_M |u|^\dsh \leq \delta \int_{M} |u|^{2^*} + C.$$
	Then, for sufficiently small $\delta>0$,
	\begin{align}\label{c1}
		I(u)&\geq \delta \int_M \modgrad{u}+\frac{1}{2}\int_M Su^2 +\delta \int_M\vert K \vert \, |u|^\dst-C.
	\end{align}

	Hence, if we take a sequence $(u_i)$ in $H^1(M)$ such that $\Vert u_i\Vert_{H^1}\to +\infty$, then either $\Vert \nabla u_i\Vert_{L^2}$ or $\Vert u_i\Vert_{L^2}$ must tend to $+\infty$, which implies by \eqref{c1} that $I(u_i)\to +\infty$. The consequence is that $I$ is coercive.
	
	Now, let us show that a global minimizer for $I$ can always be found. Let us consider
	\begin{equation*}
		\alpha = \inf\left\lbrace I(u): u\in H^1(M) \right\rbrace, 
	\end{equation*}
	and a minimizing sequence $(u_i)$ in $H^1(M)$ such that $I(u_i)\to \alpha$. Coercivity implies that the sequence $(u_i)$ is bounded in  $H^1(M)$. Thus, up to a subsequence, $(u_i)\weakto u$ in $H^1(M)$.
	
	\
	
	Using Brezis-Lieb's result in \cite{BrezisLieb}, we decompose $I(u_i)$ as $I(u_i)=I(u)+I(u_i-u)+o_i(1)$ and study the second term in the right-hand side. Using the trace inequality \eqref{ti2} for $u_i-u$ and the compactness of the embedding $H^1(M) \hookrightarrow L^{\dsh}(M)$, we obtain:
	
	\begin{align*}
		I(u_i-u) &\geq \delta \left\lbrace\frac{2(n-1)}{n-2}\int_M \modgrad{(u_i-u)}+\frac{n-2}{2n}\int_M \abs{K}\abs{u_i-u}^\dst\right\rbrace \\ &+o_i(1). 
	\end{align*}
	Therefore,
	\begin{equation*}
		I(u_i)\geq I(u)+o_i(1),
	\end{equation*}
	and it suffices to take limits to see that $u$ is a minimizer for $I$ in $H^1(M)$. Again, by the relation $I(u)=I(\abs{u})$, we can assume that the minimizer is non-negative. In order to conclude, we need to ensure that $u>0$. 
	
	\
	
	\textit{Case 1:} $S=0$ and $\int_{\partial M}H>0$. Firstly, we check that $0\in H^1(M)$ is not a global minimum for $I$, so $u$ must be positive somewhere. Take $\e >0$ and consider
	\begin{equation*}
		I(\e) = \frac{n-2}{2n}\e^\dst\int_M \vert K \vert -(n-2)\e^{\dsh}\int_{\partial M} H,
	\end{equation*}
	being the second the leading term as $\e$ approaches zero since $\dsh < \dst$. Thus, there exists $\e_0>0$ such that $I(\e)<0$ for all $0<\e<\e_0$, and consequently $\inf I<0$. We can conclude that the minimizer $u$ is not identically zero. Observe now that $|u|$ is also a minimizer, so we can assume that $u \geq 0$.
	
	\
	
	In order to show that $u>0$ in $M$, we start recalling that it solves the problem
	\begin{align}\label{rbm1}
		\left\lbrace \begin{array}{ll}
			-4\frac{n-1}{n-2}\Delta_g u = K u  ^\frac{4}{n-2}u & \mbox{ on } M, \\[0.2cm]
			\frac{2}{n-2} \frac{\partial u}{\partial \eta} = H u^{\frac{2}{n-2}}u & \mbox{ on } \partial M.
		\end{array} \right.
	\end{align}
	Apply now the maximum principle (\cite[Ch. 3]{GilbargTrudinger}) to the elliptic operator
	\begin{equation*}
		L(\phi)=\frac{4(n-1)}{n-2}\Delta_g \phi + K \vert u\vert^{\frac{4}{n-2}}\phi.
	\end{equation*}
	
	By Hopf's Strong Maximum Principle (\cite[Th. 3.5]{GilbargTrudinger}), if there exists a point $x\in \mbox{Int }M$ with $u(x)=0,$ then $u$ achieves its minimum in $\mbox{Int }M$ and thus $u\equiv 0$, which is impossible.
	
	\
	
	Moreover, if there would exist  $x_0\in \partial M$ with $u(x_0)=0$, using the second equation in \eqref{rbm1} we would get that $\frac{\partial u}{\partial \eta}(x_0)=0$, which is a contradiction to Hopf's Lemma. 
	\
	
	\textit{Case 2:} $S<0$. Analogously, we can prove that $u\not\equiv 0$ by showing that $0$ is not a global minimum. To see this, take $\e>0$ and consider
	\begin{equation*}
		I(\e) = \frac{\e^\dst}{\dst}\int_M \vert K\vert - \frac{\e^2}{2}\int_M \vert S\vert - \e^\dsh(n-2)\int_{\partial M}H.
	\end{equation*}
	In this case, the leading term as $\e\to0$ is $-\frac{\e^2}{2}\int_M \vert S\vert<0$, so, again, there exists $\e_0>0$ such that $I(\e)<I(0)=0$ for all $0<\e<\e_0$. To see that $u>0$ one can follow the same strategy as in the first case.

	
	\subsection{A first step for proving Theorem \ref{minmax}}
	
	This subsection is devoted to the proof of the following result:
	
	\begin{proposition} \label{perturbed} Assume $S=0$, $K<0$ and that $H$ is taken so that
		\begin{enumerate}
			\item $\int_{\de M}H<0$,
			\item $\D_n(\overline{p})>1$ for some $\overline{p}\in \de M$.
		\end{enumerate}
		Consider a sequence of exponents $p_i\nearrow \frac{n+2}{n-2}$. Then there exist $\kappa_i \to 1$ and  solutions $u_i$ of the perturbed problem:
		\begin{equation}\label{ps1}
			\left\lbrace\begin{array}{ll}\frac{-4(n-1)}{n-2}\Delta u = Ku^{p_i} & \mbox{in}\hsp M,\\\frac{2}{n-2}\frac{\de u}{\de \eta}=\kappa_i  Hu^{\frac{p_i+1}{2}}&\mbox{on}\hsp \de M. \end{array}\right.
		\end{equation}
		Moreover the solutions have bounded energy, that is, $I_i(u_i)$ is bounded, with
		
		\begin{align}
			I_i(u)=\frac{2(n-1)}{n-2}\int_M\modgrad{u}+\frac{1}{p_i+1}\int_M \abs{K} |u|^{p_i+1} -\kappa_i\frac{4(n-1)}{p_i+3}\int_{\partial M}H |u|^{\frac{p_i+3}{2}} \label{ps-1}. 
		\end{align}
		
	\end{proposition}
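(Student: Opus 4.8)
The plan is to apply \textbf{Struwe's monotonicity trick} combined with a \textbf{mountain-pass scheme} for the subcritical functionals $I_i$. First I would fix $p_i < \frac{n+2}{n-2}$ and introduce, for a parameter $\kappa$ in a small neighbourhood of $1$, the perturbed functional $I_{i,\kappa}$ obtained from \eqref{ps-1} by replacing the coefficient of the boundary term by $\kappa$; note that $\partial I_{i,\kappa}/\partial\kappa = -\frac{4(n-1)}{p_i+3}\int_{\partial M}H|u|^{\frac{p_i+3}{2}}$ has a sign on the relevant sets, which is what makes the energy monotone in $\kappa$. The key structural facts to establish are: (i) $I_{i,\kappa}$ has a \emph{mountain-pass geometry}, i.e. there is $\rho>0$ with $\inf_{\|u\|=\rho} I_{i,\kappa}(u) > 0 = I_{i,\kappa}(0)$ uniformly for $\kappa$ near $1$, and (ii) there is a fixed $v_0 \in H^1(M)\setminus\{0\}$ with $I_{i,\kappa}(t v_0) \to -\infty$ as $t \to +\infty$, again uniformly in $\kappa$ and in $i$. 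For (i), since $S=0$ and $K<0$, one has $I_{i,\kappa}(u) \geq \frac{2(n-1)}{n-2}\|\nabla u\|_2^2 - C\|u\|^{\frac{p_i+3}{2}} - C\|u\|^{p_i+1}$ for $\kappa$ bounded, and since $\frac{p_i+3}{2}, p_i+1 > 2$ this gives a uniform barrier at small radius (one must be slightly careful because $\nabla u$ alone is not a norm on $H^1(M)$; but the negative-curvature-type terms $\int |K||u|^{p_i+1}$ together with $\|\nabla u\|_2^2$ do control a genuine norm, as in \eqref{c1}). For (ii), one uses hypothesis (2): pick $v_0$ concentrated near $\overline p$ where $\D_n(\overline p) > 1$; the classification in Proposition \ref{lp2} and the appendix test-function estimates show that the boundary term beats the interior term along the ray $tv_0$, so the energy goes to $-\infty$ — this is precisely the computation the authors allude to when they say "a sequence $u_i$ such that $I(u_i) \to -\infty$."

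The heart of the argument is then Struwe's trick: define $c_i(\kappa) = \inf_{\gamma \in \Gamma} \max_{t} I_{i,\kappa}(\gamma(t))$ where $\Gamma$ is the usual class of paths from $0$ to a point below zero (which can be chosen independent of $\kappa$ in a small interval, using (ii)). Then $\kappa \mapsto c_i(\kappa)$ is monotone and hence differentiable for a.e.\ $\kappa$; at every point $\kappa$ of differentiability one shows, by a now-standard argument (estimating the speed of almost-optimal paths), that there exists a \emph{bounded} Palais--Smale sequence for $I_{i,\kappa}$ at level $c_i(\kappa)$. Since $p_i$ is strictly subcritical, the embeddings $H^1(M)\hookrightarrow L^{p_i+1}(M)$ and $H^1(M)\hookrightarrow L^{\frac{p_i+3}{2}}(\partial M)$ are \emph{compact}, so this bounded Palais--Smale sequence converges (up to subsequence) to a critical point $u_{i}$ of $I_{i,\kappa}$ at level $c_i(\kappa)$, which is nontrivial because $c_i(\kappa) \geq \inf_{\|u\|=\rho}I_{i,\kappa} > 0$. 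Replacing $u_i$ by $|u_i|$ and invoking the maximum principle / Hopf lemma as in the proof of Theorems \ref{minimum1}--\ref{minimum2}, we get a \emph{positive} solution. Finally, choosing for each $i$ a point $\kappa_i$ of differentiability of $c_i(\cdot)$ with $\kappa_i \to 1$ (possible since differentiability holds a.e.) yields solutions $u_i$ of \eqref{ps1}.

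For the energy bound, the point is that the mountain-pass levels $c_i(\kappa_i)$ are bounded above \emph{uniformly in $i$}: the fixed comparison path $t \mapsto t v_0$ can be used for all $i$ (with $p_i$ increasing the functional along this ray only changes in a controlled way on a fixed bounded range of $t$, since $v_0$ is a fixed bounded function), so $c_i(\kappa_i) \leq \max_{t\in[0,T]} I_{i,\kappa_i}(t v_0) \leq C$ independently of $i$. Combined with $c_i(\kappa_i) \geq \delta > 0$ from the uniform mountain-pass barrier, we conclude $0 < \delta \leq I_i(u_i) \leq C$, which is exactly the asserted boundedness of $I_i(u_i)$.

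The step I expect to be the main obstacle is making the mountain-pass geometry and the monotonicity trick work \emph{uniformly} in the subcritical parameter $p_i$ — in particular verifying that the barrier radius $\rho$, the comparison path, and the resulting bounds on $c_i$ can all be chosen independent of $i$ — since the three distinct homogeneities ($2$, $\frac{p_i+3}{2}$, $p_i+1$) and the fact that $\|\nabla\cdot\|_2$ is only a seminorm on $H^1(M)$ make the coercivity/barrier estimates delicate; one really has to exploit the interaction between $\int|K||u|^{p_i+1}$ and $\|\nabla u\|_2^2$ (as in \eqref{c1}) rather than either term alone. The rest — extracting a bounded Palais--Smale sequence at a.e.\ $\kappa$, passing to the limit by subcritical compactness, and upgrading to positivity — is by now routine.
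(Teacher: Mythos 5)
Your overall architecture (mountain-pass geometry $+$ Struwe's monotonicity trick $+$ subcritical compactness) is exactly the paper's, but two of your three structural steps contain genuine errors. The most serious is step (ii): the claim that $I_{i,\kappa}(t v_0)\to-\infty$ as $t\to+\infty$ along a fixed ray is \emph{false}. Since $K<0$, the interior term $\frac{1}{p_i+1}\int_M\abs{K}\,|tv_0|^{p_i+1}$ is positive and carries the \emph{highest} homogeneity $p_i+1>\frac{p_i+3}{2}$, so it dominates the boundary term for large $t$ and $I_{i,\kappa}(tv_0)\to+\infty$ along every ray. The functional is unbounded below not by homogeneity but by \emph{concentration}: one must send a concentration parameter to its degenerate limit (in the paper, $D\to\frac{1}{\sqrt{n(n-1)}}$ in Lemma \ref{tf15}), at which point the competition between the boundary and interior terms is governed, at the correct fixed amplitude $\mu$, by the quadratic polynomial $P(\mu)=\frac{\abs{K(p)}}{4n(n-1)}\mu^2-H(p)\mu+1$ in \eqref{tf16}, whose discriminant is positive precisely when $\D_n(\overline p)>1$. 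The min-max class is then built from segments $t\mapsto t\psi$, $t\in[0,1]$, ending at one such concentrated $\psi$ with $I_i(\psi)<0$ — not from rays going to infinity. (Your uniform upper bound on $c_i$ survives once the path is corrected, since $\psi$ is fixed.)

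The second gap is in the barrier (i): your bound never uses hypothesis (1), and without it the mountain-pass geometry fails. Test it on constants $u\equiv\e$: the gradient term vanishes, the interior term is $O(\e^{p_i+1})$, and the boundary term is $O(\e^{\frac{p_i+3}{2}})$ with $\frac{p_i+3}{2}<p_i+1$, so the boundary term dominates near $0$ in the constant direction and your crude estimate $c\,\e^{p_i+1}-C\e^{\frac{p_i+3}{2}}$ is \emph{negative} for small $\e$. The interaction between $\int\abs{K}|u|^{p_i+1}$ and $\int\modgrad{u}$ that you invoke cannot repair this, because both are of too high order there; what saves the day is that $\int_{\de M}H<0$ makes the boundary contribution of the constant part \emph{positive}. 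This is precisely the content of the paper's Lemma \ref{ps-2}, which splits $u=\bar u+\tilde u$ with $\int_M\tilde u=0$ and treats the two components separately. Finally, a smaller point: replacing a mountain-pass critical point $u_i$ by $\abs{u_i}$ does not produce a critical point (that trick is reserved for minimizers); the paper instead obtains nonnegativity by taking the Palais--Smale sequence along deformations of the path $t\psi$ with $\psi>0$ and using that the gradient flow of $I_i$ preserves the cone of nonnegative functions, and only then applies the maximum principle.
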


	Proposition \ref{perturbed} is a first step in the proof of Theorem \ref{minmax} and gives existence of solutions of aproximating problems.

	First, we shall prove that under the assumptions of Proposition \ref{perturbed}, the energy functional $I$ has a mountain pass geometry.

	\begin{lemma}\label{ps-2} Assume that $K<0$, $S=0$ and $\int_{\partial M}H<0$. Then there exists $\e>0$ such that for any $u \in H^1(M)$, $\|u\| = \e$, we have that $I(u) > \delta >0$, where $\delta$ is independent of $u$. 
	\end{lemma}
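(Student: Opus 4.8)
The plan is to show that the functional $I$ has a strict local minimum at the origin, i.e. that $I$ is positive and bounded away from zero on a small sphere in $H^1(M)$. Since $S=0$, the functional reads $I(u)=\frac{2(n-1)}{n-2}\int_M\modgrad{u}-\frac{1}{\dst}\int_M K|u|^{\dst}-(n-2)\int_{\partial M}H|u|^{\dsh}$, and recalling $K<0$, the first two terms are nonnegative. The only possibly negative contribution is $-(n-2)\int_{\partial M}H|u|^{\dsh}$, which near $u=0$ should be controlled by the quadratic term $\int_M\modgrad{u}$ thanks to the trace embedding.

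First I would invoke Proposition \ref{traceineq}: for a fixed $\e>0$ (to be chosen, with $\bar D+\e$ still allowed to be large—this does not matter here), we have
\begin{equation*}
(n-2)\int_{\partial M}H|u|^\dsh \leq (n-2)(\bar D+\e)\left(\frac{2(n-1)}{(n-2)^2}\int_M\modgrad{u}+\frac{1}{2n}\int_M|K|\,|u|^\dst\right)+C\int_M|u|^\dsh.
\end{equation*}
Plugging this into \eqref{eq3} with $S=0$, and writing $t=\frac{2(n-1)}{n-2}$, one gets that $I(u)$ is bounded below by a positive combination of $\int_M\modgrad{u}$ and $\int_M|K||u|^\dst$ minus a multiple of $\int_M|u|^\dsh$; more precisely, after absorbing, there are constants $c_1,c_2>0$ with
\begin{equation*}
I(u)\geq c_1\int_M\modgrad{u}+c_2\int_M|K|\,|u|^{\dst}-C\int_M|u|^\dsh.
\end{equation*}
Now I would estimate the error term: by Hölder and the Sobolev embedding $H^1(M)\hookrightarrow L^{\dst}(M)$, since $\dsh<\dst$, one has $\int_M|u|^\dsh\leq C\|u\|_{L^\dst}^\dsh\leq C\|u\|_{H^1}^\dsh$. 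Hence $I(u)\geq c_1\|\nabla u\|_{L^2}^2 - C\|u\|_{H^1}^{\dsh}$ with $\dsh>2$ when $n\geq 3$ (indeed $\dsh=\frac{2(n-1)}{n-2}$, which is $>2$).

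The only subtlety is that $c_1\int_M\modgrad{u}$ controls the full $H^1$ norm squared only once we also control $\int_M u^2$; but here $S=0$ removes any $L^2$ term, so I would instead argue on the sphere $\|u\|_{H^1}=\e$ by splitting into two cases. Either $\int_M\modgrad{u}\geq \frac{\e^2}{2}$, in which case $I(u)\geq c_1\frac{\e^2}{2}-C\e^\dsh\geq \delta>0$ for $\e$ small since $\dsh>2$; or $\int_M\modgrad{u}<\frac{\e^2}{2}$, so that $\int_M u^2>\frac{\e^2}{2}$, and then by the Sobolev/Poincaré-type inequality combined with the estimate above—using that the term $c_2\int_M|K||u|^\dst$ together with $\int_M|u|^\dsh\le C(\int_M|u|^\dst)^{\dsh/\dst}+C$ localized on a ball of small radius—one checks $I(u)$ stays positive; alternatively, and more cleanly, one notes that on $\{u:\|u\|_{H^1}=\e\}$ with $\e$ small, $\int_M|u|^\dsh$ is uniformly small of order $\e^\dsh=o(\e^2)$, while the positive part $c_1\|\nabla u\|^2+c_2\int|K||u|^\dst$ dominates once we add back a harmless multiple of $\int_M u^2$ coming from expanding $|u|^{\dst}$ near zero is not available, so the first-case/second-case dichotomy is the robust route. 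I expect the main (minor) obstacle to be exactly this point: since $S=0$ there is no quadratic $L^2$ term to absorb the low-order error uniformly, so one must use the dichotomy on $\|\nabla u\|_{L^2}$ versus $\|u\|_{L^2}$ and invoke the higher homogeneity $\dst>\dsh>2$ to kill the error term in each regime. Once that is handled, choosing $\e$ small enough and then $\delta$ accordingly finishes the proof.
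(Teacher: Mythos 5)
Your argument has a genuine gap, and it shows up in two places. First, the passage from Proposition \ref{traceineq} to the bound $I(u)\geq c_1\int_M\modgrad{u}+c_2\int_M|K|\,|u|^{\dst}-C\int_M|u|^{\dsh}$ with $c_1,c_2>0$ requires $\bar D+\e<1$: after substitution the coefficients are $\frac{2(n-1)}{n-2}\bigl(1-(\bar D+\e)\bigr)$ and $\frac{n-2}{2n}\bigl(1-(\bar D+\e)\bigr)$, so your parenthetical remark that ``$\bar D+\e$ still allowed to be large---this does not matter here'' is exactly where the proof breaks. The lemma is needed precisely for the mountain-pass setting of Theorem \ref{minmax}, where $\D_n(\overline p)>1$ at some boundary point, hence $\bar D>1$ and your $c_1,c_2$ are negative. (The estimate you are reusing is \eqref{c1}, which is the coercivity bound valid only under $\bar D<1$.)

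Second, and more fundamentally, your argument never uses the hypothesis $\int_{\partial M}H<0$, which cannot be dispensed with: if $\int_{\partial M}H>0$ the statement is false, since on small constants $I(c)=\frac{n-2}{2n}|c|^{\dst}\int_M|K|-(n-2)|c|^{\dsh}\int_{\partial M}H<0$ because $\dsh<\dst$ (this is exactly the computation in Case 1 of the proof of Theorem \ref{minimum2}). Your dichotomy fails in the second regime for the same reason: on (near-)constant functions with $\|u\|_{H^1}=\e$ the error term $C\int_M|u|^{\dsh}\sim\e^{\dsh}$ dominates the positive term $c_2\int_M|K||u|^{\dst}\sim\e^{\dst}$ as $\e\to0$, so the claimed lower bound is negative there even if $c_1,c_2>0$. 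The paper's proof handles precisely this constant direction by splitting $u=\bar u+\tilde u$ with $\int_M\tilde u=0$, isolating the term $-(n-2)|\bar u|^{\dsh}\int_{\partial M}H$, which is \emph{positive} of order $|\bar u|^{\dsh}$ thanks to the sign hypothesis on $\int_{\partial M}H$, and controlling the cross terms $|u|^{\dsh}-|\bar u|^{\dsh}$ via trace, Sobolev and Young inequalities so that they are absorbed by $\|\nabla\tilde u\|_{L^2}^2$ and a small fraction of $|\bar u|^{\dsh}$. You would need to incorporate this decomposition (or an equivalent way of exploiting $\int_{\partial M}H<0$ on the constant direction) for the proof to go through.
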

	\begin{proof} 
		
		We write $u = \tilde{u} + \bar{u}$, where 
		$$ \bar{u} \in \R, \ \qquad 
		\int_{M} \tilde{u}=0.$$
		
		Then, we can write:
		
		$$I(u) \geq \frac{2(n-1)}{n-2}\int_M \vert \nabla \tilde{u} \vert^2 -(n-2)\int_{\partial M}H|u|^{\dsh} $$\begin{equation} \label{la} = \frac{2(n-1)}{n-2}\int_M \vert \nabla \tilde{u} \vert^2 -(n-2) {|\bar{u}|}^{\dsh} \int_{\partial M}H - (n-2) \int_{\partial M}H (|u|^{\dsh} - |\bar{u}|^{\dsh}).\end{equation}
		
		Now we estimate the last term in the right hand side as:
		
		$$ \left | \int_{\partial M} H (|\bar{u} + \tilde{u}|^{\dsh} - |\bar{u}|^{\dsh}) \right | \leq \|H\|_{L^{\infty}} \int_{\partial M} \dsh |\tilde{u}|(|\tilde{u}|^{\dsh-1}+ |\bar{u}|^{\dsh-1}) $$$$\leq C \int_{\partial M} |\tilde{u}|^{\dsh} + \int_{\partial M} |\bar{u}| \, |\tilde{u}|^{\dsh -1}.$$
		
		By the trace and Sobolev's inequalities,
		
		$$ \int_{\partial M} |\tilde{u}|^{\dsh} \leq C  \left ( \int_{M} |\nabla \tilde{u}|^2 \right )^{\dsh/2}.$$
		
		Moreover, by combining Young's inequality with the trace and Sobolev's inequality, we have:
		
		$$ \int_{\partial M} |\bar{u}| \, |\tilde{u}|^{\dsh -1} \leq \gamma \int_{\partial M} |\bar{u}|^{\dsh} + C |\tilde{u}|^{\dsh} \leq \gamma |\partial M| |\bar{u}|^{\dsh} + C_\gamma  \left ( \int_{M} |\nabla \tilde{u}|^2 \right )^{\dsh/2}, $$
		for some $\gamma>0 $ to be chosen. Coming back to \eqref{la}, we obtain that for certain constants $c_0>0$, $C>0$, one has 	
		$$ I(u) \geq c_0 \| \tilde{u}\|^2 + c_0 |\bar{u}|^{\dsh} - \gamma |\partial M| |\bar{u}|^{\dsh} - C  \| \tilde{u}\|^{\dsh}.$$
		
		By choosing $\gamma$ sufficiently small, we conclude the proof.
	\end{proof}	
	
	\medskip 
	
	In next lemma we show that under the hypotheses of Theorem \ref{minmax}, the energy functional $I$ is not bounded from below.
	
	\begin{lemma}\label{tf15} If there exists $p\in\partial M$ such that $\D_n(p)>1$, then one can find a sequence of functions $(\varphi_k)$ in $H^1(M)$ such that $\varphi_k>0$, $I(\varphi_k)\to -\infty$ as $k\to +\infty$.
	\end{lemma}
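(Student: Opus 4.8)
The plan is to build the functions $\varphi_k$ by concentrating test functions modeled on the bubbles $b_\beta$ from Proposition \ref{lp2}(3) near the point $p \in \partial M$ where $\D_n(p)>1$, and to show that for this profile the (negative) boundary term involving $H$ beats the (positive) interior term involving $|K|$ as the concentration parameter degenerates. First I would fix geodesic normal coordinates centered at $p$, identifying a half-ball of $M$ with $B(0,r)^+ \subset \R^n_+$ and $\partial M$ locally with $\partial_0 B(0,r)^+$. Since $S=0$ is not assumed in this lemma, I keep the $\frac12\int_M S u^2$ term but note it is lower order: it scales like $\beta^2$ against the critical terms, so it is harmless. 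The natural candidates are $\varphi_k = \chi\, b_{\beta_k}(\,\cdot\,/1)$ suitably rescaled and cut off, with $\beta_k \to 0$; one must be a little careful because the bubble $b_\beta$ is singular on the sphere $|x-x_0(\beta)|=\beta$, i.e. its "center" sits outside the half-space, so the relevant family is really obtained by the conformal rescaling $v \mapsto \lambda^{(n-2)/2} v(\lambda x)$ applied to a fixed $b_{\beta_0}$, which keeps the profile smooth on $\overline{\R^n_+}$.

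The key computation is the energy expansion. Writing $W_\lambda(x) = \lambda^{(n-2)/2} b_{\beta_0}(\lambda x)$ cut off at scale $r$, the three leading terms
\[
\frac{2(n-1)}{n-2}\int_M |\nabla W_\lambda|^2, \qquad -\frac{1}{\dst}\int_M K W_\lambda^{\dst}, \qquad -(n-2)\int_{\partial M} H W_\lambda^{\dsh}
\]
are all of the same order (scale-invariant, i.e. $O(1)$ in $\lambda$) up to errors coming from the cutoff and the variation of $K,H$ near $p$; these errors I would estimate using the decay \eqref{lp18}, $|x|^{n-2} b_{\beta_0}(x) \to (n(n-2))^{(n-2)/2}\beta_0^{(n-2)/2}$, exactly as in standard Aubin–Brezis–Nirenberg-type estimates (the relevant integrals over the tail region are collected in the Appendix referred to in the excerpt). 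The main point is that, because $b_{\beta_0}$ solves the limit problem \eqref{lp1} with the constants $K(p)<0$ and $H(p)$ satisfying $\D_n(p)>1$, the leading-order value of $I(W_\lambda)$ equals the limit-problem energy evaluated on $b_{\beta_0}$, and this energy is \emph{negative}: indeed multiplying \eqref{lp1} by $v$ and integrating by parts on $\R^n_+$ relates the Dirichlet integral to the two curvature integrals, and plugging this back into the energy functional for the limit problem one finds it equals a negative multiple of $\int_{\partial \R^n_+} H(p) v^{\dsh}$ (which is positive since $H(p)>0$ when $\D_n(p)>1$ and $K(p)<0$), hence the energy is strictly negative. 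One can make it as negative as desired: replacing $b_{\beta_0}$ by $b_{\beta}$ with $\beta \to \infty$, or equivalently further exploiting the non-compact conformal group, the limit energy scales to $-\infty$. Concretely, $I(W_{\lambda,\beta}) \le -c\,\beta^{?} + o(1)$ for an appropriate exponent, and letting $\beta = \beta_k \to \infty$ (then $\lambda = \lambda_k$ small depending on $\beta_k$) gives $I(\varphi_k) \to -\infty$. Finally, $\varphi_k > 0$ is immediate since $b_{\beta_0} > 0$ and the cutoff is nonnegative; if needed one replaces $\varphi_k$ by $\varphi_k + \epsilon_k$ for a tiny constant, which changes the energy negligibly.

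The step I expect to be the main obstacle is organizing the two-parameter degeneration cleanly: one has the cutoff scale $r$ (fixed, small), the conformal rescaling $\lambda \to 0$ needed to fit the profile into the chart, and the bubble parameter $\beta \to \infty$ that drives the energy to $-\infty$; these interact, and the error terms from cutting off and from the $C^2$-variation of $K$ and $H$ away from $p$ must be shown to stay subdominant uniformly as $\beta$ grows. In practice the safest route is to first expand the energy for fixed $\beta$ with $\lambda \to 0$, obtaining $I(W_{\lambda,\beta}) = E_\infty(b_\beta) + o_\lambda(1)$ where $E_\infty$ is the limit-problem energy, then separately analyze $E_\infty(b_\beta) \to -\infty$ as $\beta \to \infty$, and finally run a diagonal argument $\beta = \beta_k \to \infty$, $\lambda = \lambda_k \to 0$ fast enough. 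A convenient simplification, if one wants to avoid the $\beta\to\infty$ analysis, is to note that since the limit energy is already \emph{strictly negative} and the energy functional $I$ has the homogeneity structure discussed after \eqref{eq3} (the $|K|$-term is $\dst$-homogeneous, the $H$-term is $\dsh$-homogeneous with $\dsh<\dst$), one can simply rescale a single well-chosen negative-energy function $u_0$ by $t>0$: $I(t u_0) = a t^2 + b t^{\dst} - c t^{\dsh}$ with $c>0$; but here both leading exponents must be examined, so the cleaner statement is that along $t \to 0^+$ the dominant term is $-c t^{\dsh}<0$ while along the bubble degeneration it is the conformal volume that blows up — either way one exhibits $I \to -\infty$, which is all that is claimed.
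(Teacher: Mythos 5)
There is a genuine gap here: the mechanism you propose for driving the energy to $-\infty$ does not work. The limit functional $E_\infty$ on $\R^n_+$ with the constant coefficients $K(p)$, $H(p)$ is invariant under the conformal rescaling $v \mapsto \lambda^{(n-2)/2}v(\lambda\,\cdot)$, which maps $b_{\beta}$ to $b_{\beta/\lambda}$; hence $E_\infty(b_\beta)$ is \emph{independent} of $\beta$, and "replacing $b_{\beta_0}$ by $b_\beta$ with $\beta\to\infty$" cannot make the limit energy tend to $-\infty$. Concentrating exact bubbles therefore yields at best $I(\varphi_k)\to E_\infty(b_{\beta_0})$, a finite number --- whose negativity you have not actually established either: eliminating the Dirichlet term via the Euler--Lagrange identity gives $E_\infty(b_\beta)=\frac1n K(p)\int_{\R^n_+}b_\beta^{\dst}+H(p)\int_{\partial\R^n_+}b_\beta^{\dsh}$, the sum of a negative and a positive term, not "a negative multiple of $\int H(p)v^{\dsh}$". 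Your fallback of amplitude scaling fares no better: $I(tu_0)\to 0^-$ as $t\to0^+$, while as $t\to+\infty$ the $|K|$-term (exponent $\dst>\dsh$) dominates and $I(tu_0)\to+\infty$. So none of the degenerations you consider produces $I\to-\infty$; at most you would show $\inf I<0$, which is weaker than the lemma.

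The paper's proof uses a different one-parameter degeneration. The test functions are $\varphi_{\beta,D}(x)=\beta^{\frac{n-2}{2}}\bigl(\dist(x,P_{\beta,D})^2-\beta^2\bigr)^{-\frac{n-2}{2}}$, where $P_{\beta,D}$ lies \emph{outside} $M$ at distance $\sqrt{n(n-1)}D\beta$ from $p$; these are not solutions of the limit problem, and the divergence is obtained by letting $D\to 1/\sqrt{n(n-1)}$ (equivalently $\e_D^2:=n(n-1)D^2-1\to 0$), so that the singular sphere of the profile becomes tangent to $\partial M$ at $p$. All three critical terms then blow up at the common rate $\e_D^{1-n}$, and an amplitude parameter $\mu$ is inserted so that the coefficient of $\e_D^{1-n}$ is $\gamma_n\mu^{n-2}\abs{\Sph^{n-2}}(n-2)P(\mu)$ with $P(\mu)=\frac{\abs{K(p)}}{4n(n-1)}\mu^2-H(p)\mu+1$; this quadratic takes negative values for some $\mu>0$ precisely when its discriminant $H(p)^2-\abs{K(p)}/(n(n-1))$ is positive, i.e.\ when $\D_n(p)>1$. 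That tangency limit, together with the optimization in $\mu$, is the idea your proposal is missing. (Your observations that the $S$-term is of lower order and that the cut-off/variable-coefficient errors are controllable are correct and consistent with the paper.)
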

	
	The proof of this lemma is postponed to the appendix.
	
	\begin{proof}[Proof of Proposition \ref{perturbed}]
		
		As it can be seen, the previous two lemmas imply that the functional $I$ has a mountain-pass geometry.  However, there are several obstacles in order to prove the existence of a solution. The first one is that we do not know whether  Palais-Smale's sequences are bounded or not. But, even if they are, we cannot guarantee compactness because of the lack of a compact embedding in the critical Sobolev inequalities. We can bypass these difficulties by considering the perturbed problems \eqref{ps1}.

		If $\kappa_i$ is close to $1$ and $p_i$ is close enough to $\frac{n+2}{n-2}$, the functional $I_i$ also satisfies the geometric assumptions of the mountain-pass lemma. Let us fix $i \in \N$: by Struwe's Monotonicity trick, there exists a bounded Palais-Smale sequence $u_k^i$ for $I_i$ for some $\kappa_i$, with $|\kappa_i - 1 | < \frac{1}{i}$.
		
		In order to conclude the proof of Proposition \ref{perturbed}, we  intend to find a positive solution as a strong limit of $u_k^i$ as $k\to \infty$. We summarize the available information about the sequence $(u_k^i)_k$:
		\begin{enumerate}
			\item $\norm{u_k^i}_{H^1(M)}$ is uniformly bounded in $k$. Therefore, there exists a weak limit $u_k^i\weakto u^i$ in $H^1(M)$.
			\item $I_i(u_k^i)\to c_i$ as $k\to +\infty$, for some positive constant $c_i$ (which is, actually, bounded also in $i$). Then,
			\begin{align}\label{ps2}
				\frac{2(n-1)}{n-2}\int_M\modgrad{u_k^i} + \frac{1}{p_i+1}\int_M |K|\, |u_k^i|^{p_i+1} \nonumber\\-\kappa_i\frac{4(n-1)}{p_i+3}\int_{\partial M} H |u_k^i|^{\frac{p_i+3}{2}} = c_i+o_k(1).
			\end{align}
			\item $I(u_k^i)'\to 0$ in $H^{-1}(M)$, so $I'(u_k^i)[v]=o_k(1)$ for every $v\in H^1(M)$.
		\end{enumerate}
		Using the compactness of the embedding $H^1(M)\emb L^p(M)$ for $1\leq p<\dst$ as well as that of the trace inequality $H^1(M)\emb L^q(\de M)$ for $1\leq q<\dsh$, we obtain
		\begin{align}
			\int_M \abs{K}|u_k^i|^{p_i+1}\to \int_M\abs{K}|u^i|^{p_i+1},\; \hsp \int_{\de M} H |u_k^i|^{\frac{p_i+1}{2}}\to \int_{\de M} H |u^i|^{\frac{p_i+1}{2}},
		\end{align}
		as $k\to +\infty$. By testing $I_i'$ on $u_k^i - u^i$, we also find that:
				$$ I_i'(u_k^i)(u_k^i - u^i) = \frac{4(n-1)}{n-2} \Big ( \int_{M} | \nabla u_k^i|^2 - \nabla u_k^i\cdot \nabla u^i \Big ) + o_k(1).$$
		
		This implies the strong convergence $u_k^i  \to u^i$ as $k \to +\infty$. Hence $u^i$ is a nontrivial solution of the problem:
		
		$$\left\lbrace\begin{array}{ll}\frac{-4(n-1)}{n-2}\Delta u^i = K|u^i|^{p_i-1}u^i & \mbox{in}\hsp M.\\\frac{2}{n-2}\frac{\de u^i}{\de \eta}=\kappa_i H |u^i|^{\frac{p_i-1}{2}} u^i&\mbox{on}\hsp \de M.\end{array}\right.$$
		
A priori, such solutions could be sign changing. However, let us depart from a function $\psi>0$ such that $I^i(\psi)<0$ (given by Lemma \ref{tf15}), and define the continuous curve $\gamma:[0,1] \to H^1(M)$, $\gamma(t)= t \psi$. 

As is well-known, the (PS) sequence can be taken very close to the family of curves given by deformations of $\gamma$ under the gradient flow of $I^i$ (see for instance \cite{ghoussoub}). We can now use the fact that the gradient flow of $I^i$ leaves invariant the cone of nonnegative functions to conclude that the limit of the (PS) sequence $u^i$ is nonnegative. This is rather standard, see for instance \cite[Lemma 4.1, (a)]{bartsch}. By using the maximum principle as previously we conclude that $u_i >0$.
	\end{proof}

	\section{Generalities on the singular set }\label{s:blowup}\setcounter{equation}{0}
	
	In this section we prove (1) in Theorem \ref{compactness}. The tricky point here is that we cannot assume that around points in $\mathscr{S}_0$ there are local maxima of $u_i$. We perform then a blow-up analysis around a suitably defined sequence of points, which are chosen thanks to \emph{Ekeland's variational principle}.

	Let $K_i: M \to \R$ and $H_i: \partial M \to \R$ be sequences of regular functions  such that $K_i\to K$ and $H_i\to H$ in the $C^2$ sense. Assume that $K<0$ and that $S$ has constant sign. For a sequence $(u_i)_i$ of positive solutions to
	\begin{align}\label{tfgc1}
		\left\lbrace \begin{array}{ll}
			-4\frac{n-1}{n-2}\Delta_g u_i+S u_i = K_i {u_i}^{p_i} & \mbox{ on } M, \\[0.2cm]
			\frac{2}{n-2} \frac{\partial u_i}{\partial \eta} = H_i {u_i}^{\frac{p_i+1}{2}} & \mbox{ on } \partial M,
		\end{array} \right.
	\end{align}
	with $p_i\nearrow \frac{n+2}{n-2}$, we recall the definition of the singular set
	\begin{equation} \label{tfgc2}
		\mathscr{S} = \left\lbrace p\in \overline{M}:\:\exists (x_i)\to p \mbox{ such that } u_i(x_i)\to+\infty\right\rbrace.
	\end{equation}
	
	\begin{proposition} \label{Ekeland} $\mathscr{S}\subset \{p\in \partial M: \D_n\geq 1\}$. \end{proposition}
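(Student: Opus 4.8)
The plan is to argue by contradiction: suppose $p \in \cS$ but $p$ is not a boundary point with $\D_n(p) \geq 1$. There are two cases to rule out: (i) $p$ is an interior point of $M$, and (ii) $p \in \partial M$ but $\D_n(p) < 1$. In both cases the strategy is the standard one: blow up the sequence $(u_i)$ around suitable points converging to $p$, pass to the limit to obtain a nontrivial entire solution of a limiting equation, and contradict a nonexistence statement. The subtlety, as the introduction warns, is that we cannot simply rescale around local maxima of $u_i$: near points of $\cS$ the solutions need not have maxima (this is the horosphere-type blow-up). So the first key step is to \emph{select} the blow-up centers using Ekeland's variational principle, applied to a functional of the form $u_i(x)\, d_g(x,p)^{\frac{2}{p_i-1}}$ (or $u_i(x)^{\frac{p_i-1}{2}} d_g(x,p)$) on a small geodesic ball around $p$; this produces points $x_i \to p$ at which $u_i(x_i) \to \infty$ and such that $u_i$ is ``almost maximal'' on a ball of radius comparable to $u_i(x_i)^{-(p_i-1)/2}$, with an error term that vanishes in the rescaled limit.

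Next I would set $\mu_i = u_i(x_i)^{-(p_i-1)/2} \to 0$ and define the rescaled functions $v_i(y) = \mu_i^{\frac{2}{p_i-1}} u_i(\exp_{x_i}(\mu_i y))$ in normal coordinates, working in the half-space model $\R^n_+$ if $x_i$ is on or near the boundary (after straightening $\partial M$) and in $\R^n$ if $x_i$ stays at fixed distance from $\partial M$. By construction $v_i(0) = 1$, $v_i \leq 1 + o(1)$ on expanding balls, and $v_i$ solves the rescaled version of \eqref{tfgc1}; the zeroth-order term $Su_i$ rescales away (the factor $\mu_i^2$ kills it), $K_i \to K(p)$, $H_i \to H(p)$, and $p_i \to \frac{n+2}{n-2}$. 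Standard elliptic estimates (interior and boundary $C^{2,\alpha}$ up to the flat boundary, using that the $v_i$ are locally bounded) give convergence in $C^2_{loc}$ to a nonnegative solution $v$ with $v(0) = 1$, hence $v \not\equiv 0$. In the interior case $v$ solves $-\Delta v = c K(p) v^{(n+2)/(n-2)}$ on all of $\R^n$ with $K(p) < 0$, so $v$ is subharmonic, bounded, and nonconstant — impossible (a bounded subharmonic function on $\R^n$ attaining an interior maximum is constant), which also disposes of the interior subcase. In the boundary case with $\D_n(p) < 1$, $v$ is a nonnegative nontrivial solution of the limit problem \eqref{lp1} with $\D_n(p) < 1$, which by Proposition \ref{lp2}(1) does not exist — contradiction.

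The main obstacle I expect is making the Ekeland selection rigorous and, more importantly, controlling the geometry near the boundary: one must verify that after rescaling the boundary becomes flat in the limit (the second fundamental form scales like $\mu_i \to 0$, so this is fine) and that the rescaled solutions do not escape to infinity in the $x_n$-direction — i.e., one must show the blow-up genuinely ``sees'' the boundary, or else reduce to the interior case. A careful dichotomy is needed: either $\dist_g(x_i, \partial M)/\mu_i$ stays bounded, in which case the boundary survives in the limit and one gets a solution on $\R^n_+$; or it diverges, in which case the boundary disappears and one gets a solution on $\R^n$ of the interior equation, already excluded. Handling the error terms coming from the Ekeland ``almost critical point'' estimate (showing they are $o(1)$ in $H^{-1}$ of the rescaled equation, so that the limit is a genuine weak — hence classical — solution) is the technical heart of the argument; everything else is routine elliptic regularity and the already-cited classification of Proposition \ref{lp2}.
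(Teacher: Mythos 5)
Your proposal is correct and follows essentially the same route as the paper: Ekeland's variational principle is applied (the paper minimizes $\varphi = u_i^{-(p_i-1)/2}$ with parameters $\e_i$, $\lambda=\sqrt{\e_i}$) to select points $z_i$ near which $u_i$ is almost maximal at scale $u_i(z_i)^{-(p_i-1)/2}$, one rescales, runs the dichotomy on $\dist(z_i,\partial M)/\delta_i$, and concludes via Proposition \ref{lp2} in the half-space case and via subharmonicity plus the interior maximum at the origin in the whole-space case. The only point worth noting is that your worry about ``$H^{-1}$ errors from an almost-critical-point property'' is moot: the $u_i$ are exact solutions of \eqref{tfgc1}, and Ekeland is used only to produce the pointwise bound $v_i\leq(1-\sqrt{\e_i}R)^{-2/(p_i-1)}$ on balls of radius $R$, which is all that elliptic estimates need.
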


	The idea is to make a suitable rescaling and pass to a problem in a half-space, whose solutions have been classified (\cite{ChipotFilaShafrir}). In general, this analysis is made around points of local maximum for the sequence $u_i$. However, in our situation we cannot guarantee their existence, as for example $u_i$ might be monotone in some direction when 
		restricted to a portion of the boundary. We bypass this obstacle by choosing good sequences, even if they are not local maxima, by means of  Ekeland's variational principle. For convenience we state it below, see e.g. Chapter I in \cite{struwe}.
		\begin{lemma} Let $(X,d)$ be a complete metric space and $\varphi:X\to (-\infty,+\infty]$ lower semicontinuous, bounded from below and not identically equal to $+\infty$. Let $\e>0$ and $\lambda>0$ be given and let $x\in X$ be such that $\varphi(x)\leq \inf_X \varphi + \e$. Then, there exists $x_\e\in X$ such that
			\begin{enumerate}
				\item $\varphi(x_\e)\leq \varphi(x)$,
				\item $d(x_\e,x)\leq \lambda$, and
				\item $\varphi(x_\e)<\varphi(z)+\frac{\e}{\lambda}d(x_\e,z)$ for every $z\neq x_\e$.
			\end{enumerate}
		\end{lemma}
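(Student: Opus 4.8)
The statement to prove is Ekeland's variational principle, and the plan is to establish it by the classical iterative ``descent'' construction (a Zorn's-lemma argument based on the partial order $y \preceq z \iff \varphi(y) + \tfrac{\e}{\lambda} d(y,z) \le \varphi(z)$ would work equally well, but the iterative version is self-contained and matches the constants appearing in the statement). Set $\delta = \e/\lambda$ and start from $x_0 = x$. Given $x_n$, I would introduce its \emph{descent set}
\[
S_n = \{ z \in X : \varphi(z) + \delta\, d(z, x_n) \le \varphi(x_n) \},
\]
which is nonempty since $x_n \in S_n$ and closed since $\varphi$ is lower semicontinuous and $d(\cdot, x_n)$ is continuous, and then choose $x_{n+1} \in S_n$ that \emph{almost minimizes} $\varphi$ over $S_n$, say $\varphi(x_{n+1}) \le \tfrac12\bigl(\varphi(x_n) + \inf_{S_n}\varphi\bigr)$. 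Two facts should be recorded immediately: (a) $S_{n+1} \subseteq S_n$, obtained by adding the defining inequalities of $z \in S_{n+1}$ and of $x_{n+1} \in S_n$ and using the triangle inequality; and (b) writing $a_n = \varphi(x_n)$ and $b_n = \inf_{S_n}\varphi$, the chain $b_n \le a_{n+1} \le \tfrac12(a_n + b_n)$ rearranges to $a_{n+1} - b_n \le a_n - a_{n+1}$.

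Next I would check that $(x_n)$ converges. Since $x_{n+1} \in S_n$ we have $\delta\, d(x_{n+1}, x_n) \le a_n - a_{n+1}$; the sequence $(a_n)$ is nonincreasing and bounded below by $\inf_X \varphi$, hence convergent, so $\sum_n d(x_{n+1}, x_n) < \infty$ and $(x_n)$ is Cauchy. By completeness $x_n \to x_\e$ for some $x_\e \in X$. Using (a), the tail $\{x_m : m \ge n\}$ is contained in the closed set $S_n$, so $x_\e \in \bigcap_n S_n$; in particular $x_\e \in S_0$ reads $\varphi(x_\e) + \delta\, d(x_\e, x) \le \varphi(x)$, which already gives property (1), and together with $\varphi(x) \le \inf_X \varphi + \e \le \varphi(x_\e) + \e$ it forces $\delta\, d(x_\e, x) \le \e$, i.e. $d(x_\e, x) \le \lambda$, which is property (2).

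For property (3) I would argue by contradiction. If some $z \ne x_\e$ satisfied $\varphi(z) + \delta\, d(z, x_\e) \le \varphi(x_\e)$, then combining this with $x_\e \in S_n$ and the triangle inequality shows $z \in S_n$ for every $n$; consequently $\varphi(z) \ge b_n$, while $z \in S_{n+1}$ gives $\delta\, d(z, x_{n+1}) \le \varphi(x_{n+1}) - \varphi(z) \le a_{n+1} - b_n$, and by (b) the right-hand side is $\le a_n - a_{n+1} \to 0$, so $d(z, x_{n+1}) \to 0$ and $z = x_\e$, a contradiction. The single point that needs care, and which I expect to be the crux, is the almost-minimizing choice of $x_{n+1}$ over $S_n$: this is exactly what makes the ``widths'' $a_{n+1} - b_n$ collapse and thereby upgrades the non-strict descent inequalities into the strict inequality (3) in the limit. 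Everything else is bookkeeping with the triangle inequality, lower semicontinuity, and completeness.
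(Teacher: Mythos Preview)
Your proof is correct and is in fact the standard iterative construction of Ekeland's variational principle. The paper, however, does not prove this lemma at all: it is stated purely for the reader's convenience, with the phrase ``see e.g.\ Chapter I in \cite{struwe}'' in lieu of a proof. So there is nothing to compare against, and your argument supplies exactly the details that the paper delegates to the reference.
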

	
	\begin{proof}[Proof of Proposition \ref{Ekeland}]
		Let $p\in \mathscr{S}$. Take geodesic normal coordinates around $p$, valid in a small geodesic ball $B$, and $(y_i)\to 0$ a sequence in $B$ such that $u_i(y_i)\to +\infty$. We define
		\begin{equation}
			\e_i := u_i(y_i)^{-\frac{p_i-1}{2}}\to 0.
		\end{equation}
		We apply Ekeland's variational principle taking $\varphi = u_i^{-\frac{p_i-1}{2}}$ and $\lambda = \sqrt{\e_i}$. Then, there exists a sequence $(z_i)$ such that
		\begin{enumerate}
			\item $u_i(z_i)^{-\frac{p_i-1}{2}} \leq u_i(y_i)^{-\frac{p_i-1}{2}}$. Hence, $u_i(y_i)\leq u_i(z_i)$ and, consequently, $u_i(z_i)\to +\infty.$
			\item $\abs{z_i-y_i}\leq \sqrt{\e_i}$. In particular $(z_i)\to 0.$
			\item $u_i(z_i)^{-\frac{p_i-1}{2}}<u_i(y_i)^{-\frac{p_i-1}{2}}+\sqrt{\e_i}\abs{z_i-z}$ for every $z\neq z_i.$
		\end{enumerate}
		The idea is that the new sequence $(z_i)$ is more adequate to rescale and pass to the limit. Now, we set $\delta_i = u_i(z_i)^{-\frac{p_i-1}{2}}\to 0$, $B_i = B(z_i,\frac{r}{2})\cap B$ and define the sequence of rescaled functions
		\begin{equation}\label{tfgc7}
			v_i(x)= \delta_i^{\frac{2}{p_i-1}}u_i(\delta_i x + z_i),
		\end{equation}
		for every $x\in \tilde{B}_i = \frac{1}{\delta_i}B_i.$ Clearly
		\begin{equation} \label{tfgc3}
			v_i(0)=\frac{u_i(z_i)}{u_i(z_i)}=1.
		\end{equation}
		We claim that, for every given $\e>0$ and $R>0$, $v_i$ satisfies the following uniform boundedness property:
		\begin{equation}\label{tfgc4}
			v_i(x)\leq 1+\e,\: \forall x\in \tilde{B}_i \mbox{ with } \abs{x}<R.
		\end{equation}
		Indeed, from $(3)$, if $\abs{z-z_i}<R\delta_i$, then:
		\begin{equation}\label{tfgc5}
			u_i(z)<\left(\frac{1}{1-\sqrt{\e_i}R}\right)^{\frac{2}{p_i-1}}u_i(z_i).
		\end{equation}
		Taking $z=\delta_i x + z_i$ in \eqref{tfgc5} and using  \eqref{tfgc7} we obtain:
		\begin{equation}\label{tfgc6}
			v_i(x)<\frac{1}{(1-\sqrt{\e_i}R)^{\frac{2}{p_i-1}}},
		\end{equation}
		which proves \eqref{tfgc4}. This allows us to make a scaling argument in the spirit of \cite{GidasSpruck}. We distinguish between two cases:
		
		\
		
		\textbf{Case 1:} $p\in\partial M$ and, up to a subsequence, $\frac{d(z_i,\partial_0B)}{\delta_i}\to t_0 \geq 0$. 
		
		\
		
		Straightforward computations show that the function $v_i$ satisfies 
		\begin{align*}
			&-(\sqrt{\abs{g}}g^{jk})(\delta_ix+z_i)\frac{\partial^2v_i(x)}{\partial x_j\partial x_k}+\delta_i \frac{\partial(\sqrt{\abs{g}}g^{jk})}{\partial x_j}(\delta_ix+z_i)\frac{\partial v_i(x)}{\partial x_k}+\nonumber\\&+\delta_i^2 \frac{1}{C_n}(\sqrt{\abs{g}}R_i)(\delta_ix+z_i)v_i(x)-\frac{1}{C_n}(\sqrt{\abs{g}}K_i)(\delta_ix+z_i)v_i(x)^{p_i}=0
		\end{align*}
		for $x\in \tilde{B}_i$, and
		\begin{align*}
			g_{jj}\frac{\partial v_i(x)}{\partial x_j} \eta^j-\frac{n-2}{2}H_i(\delta_ix+z_i)v_i(x)^{\frac{p_i+1}{2}}=0
		\end{align*}
		on the straight portion of its boundary. Using \eqref{tfgc4} and local regularity estimates 
		we obtain that, up to a subsequence, $v_i\to v$ in $C^2_{loc}(\R^n_+)$. Also, $g_{jk} = \delta_{jk}+o(\abs{x}^2)$ so, if we let $i\to +\infty$, we get a solution to the limit problem \eqref{lp1}.
		
		The results in \cite{ChipotFilaShafrir} (recalled in Proposition \ref{lp2} above) establish that this problem admits a solution whenever $\D_n(p)\geq 1.$
		
		\
		
		\textbf{Case 2:} $\frac{d(x_k,\partial_0B)}{\delta_k}\to +\infty$.
		
		\
		
		In this situation, the domains $\tilde{B}_i$ invade all of $\R^n$: reasoning as before, we have $v_i\to v$ in $C^2_{loc}(\R^n)$. Moreover, $v$ solves the following equation:
		\begin{equation}
			-\Delta v = \frac{n-2}{4(n-1)}K(0)v^{\frac{n+2}{n-2}} \mbox{ on } \R^n.
		\end{equation}
		Consider $R>0$ and the domain $\Omega = B^n(0,R)$. Since $K<0$, we have $\Delta v >0$ in $\Omega$ so, by the Maximum Principle, it cannot achieve its maximum unless it is constant. However, passing \eqref{tfgc6} to the limit we obtain
		\begin{equation*}
			v(x)\leq 1 \hsp \forall x \in \Omega,
		\end{equation*}
		while $v(0)=1$. Thus, this second case can be dismissed and the proof of the proposition is completed.
		
	\end{proof}
	
	As remarked previously, we cannot guarantee that around singular points we have a sequence of local maxima. We now show that, a posteriori, this is the case for singular points in $\{\D_n>1\}$.
	
	\begin{lemma} \label{l:maxloc} Let $p \in \cS_1$. Then there exists a sequence $x_i \in \partial M$, $x_i \to p$, such that $u_i(x_i) \to +\infty $ is local maximum of $u_i$.  Moreover, there exist $\oR_i\to+\infty$ and such that, up to a subsequence,
		\begin{align*}
		&r_i:= \oR_iu_i(x_i)^{-\frac{p_i-1}{2}} \to 0 \ \mbox{ and} \\
		&\left\Vert u_i(x_i)^{-1}u_i\left(u_i(x_i)^{-\frac{p_i-1}{2}}x+x_i\right)-b_{\beta_0}(x) \right\Vert_{C^2\left(B\left(x_i,\oR_i\right) \right)} \to 0,
		\end{align*}
		where $b_{\beta_0}$ is given by \eqref{lp4}.
	\end{lemma}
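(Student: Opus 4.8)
The plan is to upgrade the Ekeland-based analysis of Proposition \ref{Ekeland} to a genuine local maximum, using the extra information that at points of $\cS_1$ the limiting profile is a \emph{bubble} $b_{\beta_0}$, which decays at infinity. First I would take, as in the proof of Proposition \ref{Ekeland}, geodesic normal coordinates around $p\in\cS_1$ and a sequence $y_i\to 0$ with $u_i(y_i)\to+\infty$; applying Ekeland's variational principle to $\varphi=u_i^{-(p_i-1)/2}$ with $\lambda=\sqrt{\e_i}$, $\e_i=u_i(y_i)^{-(p_i-1)/2}$, I obtain a sequence $z_i\to 0$ with $u_i(z_i)\to+\infty$ satisfying the almost-maximality estimate \eqref{tfgc6}, hence the rescalings $v_i(x)=\delta_i^{2/(p_i-1)}u_i(\delta_i x+z_i)$, $\delta_i=u_i(z_i)^{-(p_i-1)/2}$, converge in $C^2_{\mathrm{loc}}$ to a solution $v$ of \eqref{lp1}. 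Since $p\in\cS_1$, i.e.\ $\D_n(p)>1$, Proposition \ref{lp2} forces $v=b_{\beta_0}$ for some $\beta_0>0$ (and rules out the interior-blow-up Case 2 as before, so $z_i$ must approach the boundary). Note $b_{\beta_0}$ attains its maximum at the boundary point $0$.

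Next I would exploit the decay \eqref{lp18}: $b_{\beta_0}(x)\to 0$ as $|x|\to\infty$. Fix a large radius $R$ so that $\sup_{|x|=R}b_{\beta_0}(x)<\tfrac12 b_{\beta_0}(0)$; by $C^0_{\mathrm{loc}}$ convergence, for $i$ large one has $v_i<\tfrac34$ on the sphere $|x|=R$ while $v_i(0)=1$, hence the maximum of $v_i$ over the half-ball $\{|x|\le R\}_+$ is attained at some interior-or-boundary point $\xi_i$ with $|\xi_i|<R$ and $v_i(\xi_i)\ge 1$. Undoing the rescaling, $x_i:=\delta_i\xi_i+z_i$ is a point where $u_i$ restricted to the geodesic ball $B(z_i,R\delta_i)$ attains its maximum; since $u_i$ solves an equation with $K<0$ in the interior, the interior maximum principle (as in Case 2 of Proposition \ref{Ekeland}) forbids $\xi_i$ from lying in the open half-ball, so $\xi_i\in\partial_0$, i.e.\ $x_i\in\partial M$. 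Because the maximum over a fixed-size ball equals the global max of $v_i$ near the blow-up (the profile decays, so no mass escapes), $x_i$ is in fact a local maximum of $u_i$ in $M$, and $u_i(x_i)\ge u_i(z_i)\to+\infty$.

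Finally, recentering the rescaling at $x_i$ rather than $z_i$: since $|x_i-z_i|\le R\delta_i=O(\delta_i)$ and $u_i(x_i)/u_i(z_i)=v_i(\xi_i)\to b_{\beta_0}(0)$ (a fixed positive constant), the functions $u_i(x_i)^{-1}u_i\big(u_i(x_i)^{-(p_i-1)/2}x+x_i\big)$ are a bounded rescaling of the $v_i$ composed with an $O(1)$ translation and dilation, so they again converge in $C^2_{\mathrm{loc}}(\R^n_+)$ to a bubble; after absorbing the constant $b_{\beta_0}(0)$ into a redefinition of the parameter $\beta_0$ and using that $x_i$ is now the maximum point (which pins the translation to $0$), the limit is exactly $b_{\beta_0}$ in the normalization of \eqref{lp4}. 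One then chooses $\oR_i\to+\infty$ slowly enough that both $r_i=\oR_i u_i(x_i)^{-(p_i-1)/2}\to 0$ (possible since $u_i(x_i)\to\infty$, using $\oR_i=o\big(u_i(x_i)^{(p_i-1)/2}\big)$) and the $C^2$ norm over $B(x_i,\oR_i)$ of the difference tends to $0$, by a diagonal argument from the locally uniform convergence.

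The main obstacle is the step asserting that $x_i$ is a \emph{local} maximum of $u_i$ in $M$, not merely a maximum over the shrinking ball $B(z_i,R\delta_i)$: one must rule out that $u_i$ has higher values just outside this ball but still near $p$. This is where the decay estimate \eqref{lp18} for the bubble is essential — it guarantees that on an annulus $R<|x|<R'$ (in rescaled variables, with $R'\gg R$ also fixed) $v_i$ stays uniformly below $v_i(\xi_i)$, so the max over the larger half-ball is still attained near the center; combined with a standard argument that outside any fixed geodesic ball around $p$ the sequence $u_i$ is bounded (by the very definition of the blow-up point and a covering/iteration argument), this yields that $x_i$ is a local maximum. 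Getting this cleanly, and in particular handling the interplay between the Riemannian metric errors $g_{jk}=\delta_{jk}+o(|x|^2)$ and the maximum principle near the boundary, is the delicate part; everything else is routine rescaling bookkeeping.
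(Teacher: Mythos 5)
Your proposal is correct and follows essentially the same route as the paper: the Ekeland rescaling from Proposition \ref{Ekeland}, convergence to a decaying bubble whose global maximum forces $v_i$ to attain its maximum at an interior point of a fixed rescaled half-ball, subharmonicity from $K<0$ to place that point on $\partial M$, and a diagonal choice of $\oR_i$. The only remark is that the ``main obstacle'' you flag at the end is not one: a maximum of $u_i$ over $B(z_i,R\delta_i)$ attained at a point at distance strictly less than $R\delta_i$ from $z_i$ is already, by definition, a local maximum of $u_i$ in $M$, so no control of $u_i$ outside that ball is required (and, incidentally, $v_i(\xi_i)$ converges to $\max b_{\beta_0}$, which here equals $b_{\beta_0}(0)=1$ only because the Ekeland bound \eqref{tfgc6} pins the limit's global maximum at the origin).
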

	
	\begin{proof} Given $p \in \cS_1$, we come back to the proof of Proposition \ref{Ekeland} (case 1). Hence we obtain that the rescaled functions $v_i$ converge locally in the $C^1$ sense to the function in \eqref{lp4}. Observe now that the limit solution \eqref{lp4} has a global maximum. As a consequence, the sequence $v_i$ attains also its maximum at a certain point $\tilde{x}_i$. Rescaling back, we obtain points $x_i$ which are local maxima of $u_i$.

		Let us now show that if $i$ is large enough, $x_i\in\partial M$. This is a consequence of the subharmonicity, given in turn by the hypothesis $K<0$, since
		\begin{equation*}
			-\Delta u_i(x_i) = K(x_i)u_i(x_i)^{p_i}-S \, u_i(x_i)<0.
		\end{equation*}
	The convergence is due to the convergence of the newly defined sequence of rescaled functions $v_i$, as in the proof of Case 1 of Proposition \ref{Ekeland}
	\end{proof}

\section{Properties of isolated simple blow-up points} \label{s:i-s-bu} \setcounter{equation}{0}

Along this section we analyze blow-up points $x \in \cS_1$ and the sequence of maxima $(x_i)\to x$ given by Lemma \ref{l:maxloc}, where locally the blow-up has profile given by \eqref{lp4}. 
Below, we define the notions of isolated and simple blow-up points, first introduced in  \cite{schoen}, adapted to our framework. Then we study the asymptotic behavior of $u_i$ around such points. This analysis is the counterpart 
of some results from \cite{li95} and \cite{djadlimalchiodiahmedou} (see also \cite{felli}), which we adapt here 
to the case of negative scalar curvature. From this study, one obtains that there are only finitely many isolated simple blow-up points. In next section we shall prove that if $n=3$ all blow-up points of $\mathscr{S}_1$ are isolated and simple. In particular this implies (2.1) of Theorem \ref{compactness}.

In the asymptotic analysis that follow, we sometimes use the notation of the Euclidean space, which makes no difference in the computations.

\begin{definition}\label{tec01} $\bar{x}\in\partial M$ is an isolated point of blow-up for $(u_i)$ if there exist  local maxima $(x_i)$ such that $(x_i)\to x$, $u_i(x_i)\to +\infty$ and 
	\begin{equation}\label{tec02}
	u_i(y)\leq \frac{C}{\dist(y,x_i)^\frac{2}{p_i-1}}, \hsp\mbox{ for every } y\in B(x_i,R)_+,
	\end{equation}
	where $C$ and $R$ are positive constants independent of $i$.
\end{definition}

\begin{definition}\label{tec03} Let $(u_i)$, $(x_i)$, $\bar{x}$ be as in Definition \ref{tec01}, and consider the radial averages
	\begin{equation*}
	\overline{u}_i(r) = \fint_{\de^+B(x_i,r)_+}u_i.
	\end{equation*}
	 $\bar{x}$ is called an isolated simple blow-up point of blow-up if there exists $\rho>0$, independent of $i$, such that the 1-dimensional functions
	\begin{equation*}
	\hat{u}_i(r)=r^{\frac{2}{p_i-1}}\overline{u}_i(r)
	\end{equation*}
	have exactly one critical point for $r\in (0,\rho)$.
\end{definition}

Next proposition gives a version of the well-known Harnack inequality: 
\begin{proposition}\label{tec05}
	Let $\Omega=B(p,R_0) \subset M$. Assume that $(u_i)$ is a sequence of solutions of \eqref{tfgc1} with $1\leq  p_i$, $p$ is an isolated point of blow-up. Let $(x_i) \to p$ and $R<R_0$ be as in Definition \ref{tec01}. Then, for every $0<r<\frac{R}{4}$, the following Harnack-type inequality holds
	\begin{equation*}
	\max_{B(x_i,2r)_+\backslash B(x_i,\frac{r}{2})_+}u_i(y)\leq C \min_{B(x_i,2r)_+\backslash B(x_i,\frac{r}{2})_+}u_i(y),
	\end{equation*}
	where $C$ is a positive constant independent of $i$ and $r$.
\end{proposition}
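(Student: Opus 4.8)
The plan is to establish the Harnack inequality by a scaling argument that reduces matters to standard elliptic estimates applied to the rescaled equation, exploiting the pointwise decay bound \eqref{tec02} that comes with the definition of an isolated blow-up point. The key point is that the bound \eqref{tec02} controls $u_i$ from above on each annulus $B(x_i,2r)_+\setminus B(x_i,r/2)_+$ by $C\,r^{-2/(p_i-1)}$, so after rescaling by this factor we obtain a sequence of functions that are uniformly bounded, and then Harnack for the rescaled equation (which has bounded coefficients) gives the matching lower bound.

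More precisely, first I would fix $0<r<R/4$ and define, in geodesic normal coordinates centred at $x_i$, the rescaled functions
\begin{equation*}
	w_i(z) = r^{\frac{2}{p_i-1}} u_i(x_i + r z), \qquad z \in \left(B(0,3)\setminus B(0,1/4)\right)\cap \tfrac{1}{r}(\text{coordinate patch}).
\end{equation*}
A direct computation, entirely analogous to the one in the proof of Proposition \ref{Ekeland}, shows that $w_i$ solves an equation of the form $-\mathrm{div}(A_i \nabla w_i) + r^2 c_i w_i = f_i w_i^{p_i}$ on the annular region, with a Neumann-type boundary condition $\partial w_i/\partial \eta = \tilde h_i w_i^{(p_i+1)/2}$ on the flat part; here $A_i \to \mathrm{Id}$, and $c_i$, $f_i$, $\tilde h_i$ are bounded in $C^1$ because $S$, $K_i$, $H_i$ are. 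By \eqref{tec02}, $w_i(z) \le C$ uniformly on $\{1/4 \le |z| \le 3\}$, so $w_i^{p_i-1} \le C$ as well, and we can absorb the nonlinear terms into bounded zeroth-order coefficients: $w_i$ satisfies a linear equation $-\mathrm{div}(A_i\nabla w_i) + b_i w_i = 0$ with $\|b_i\|_{L^\infty}$ bounded, together with a homogeneous-in-sign Neumann condition with bounded coefficient on the flat boundary portion.

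Next I would apply the Harnack inequality for such equations — the interior version from \cite{GilbargTrudinger} away from the boundary, and the boundary Harnack inequality (for the oblique/Neumann problem on the half-ball, which follows by a standard reflection or by the boundary Harnack estimates) on a neighbourhood of the flat part — on the fixed compact annulus $\{1/2 \le |z| \le 2\}\cap \overline{\R^n_+}$. A covering argument by finitely many balls (interior ones and boundary half-balls), with constants depending only on $n$ and the uniform bounds on the coefficients, yields
\begin{equation*}
	\max_{\{1/2 \le |z| \le 2\}_+} w_i \le C \min_{\{1/2 \le |z| \le 2\}_+} w_i,
\end{equation*}
with $C$ independent of $i$ and of $r$. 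Scaling back, $w_i(z) = r^{2/(p_i-1)} u_i(x_i+rz)$, and since the factor $r^{2/(p_i-1)}$ cancels on both sides, this is exactly the claimed inequality on $B(x_i,2r)_+\setminus B(x_i,r/2)_+$.

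The main obstacle I expect is the boundary part of the argument: ensuring that the Harnack constant in the oblique-derivative (Neumann-type) problem on half-balls is genuinely uniform in $i$, since the boundary coefficient $\tilde h_i = \tfrac{n-2}{2} H_i(x_i+rz) w_i^{(p_i-1)/2}$ depends on $r$ and $i$ — but it is bounded uniformly thanks to \eqref{tec02} and $H_i \to H$ in $C^2$, so the standard boundary Harnack estimate applies with a uniform constant. A secondary technical point is that the metric coefficients, while converging to the Euclidean ones, must be controlled uniformly in $C^1$ on the relevant scale; this is automatic since we work in a fixed geodesic ball $B(p,R_0)$ and the rescaling only improves the regularity (the metric in the $z$-variables is $g_{jk}(x_i+rz) = \delta_{jk} + O(r^2|z|^2)$). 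Everything else is routine.
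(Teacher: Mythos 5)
Your argument is correct and is exactly the standard proof that the paper delegates to \cite{HanLi}: rescale by $r$, use the isolated-blow-up bound \eqref{tec02} to get a uniform $L^\infty$ bound on the rescaled annulus so that the nonlinear terms become bounded zeroth-order coefficients (with the boundary condition scale-invariant, as you note), and then chain the interior and boundary Harnack inequalities with constants depending only on $n$ and the coefficient bounds. Nothing further is needed.
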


The details can be found in \cite{HanLi}. 


The main result of this section is the following description of the sequence $u_i$ around isolated simple blow-up points:

\begin{proposition}\label{tec2} Let us assume $\Omega=B_2=B(0,2)$ for simplicity. Assume $(K_i)\to K<0$, in $C^1(\overline{\Omega_+})$ and $(H_i)\to H$ in $C^2(\overline{\partial_0\Omega})$. Suppose that, for every $i\in\N$, $u_i$ is a positive solution of \eqref{tfgc1} and that $x_i\to 0$ is an isolated simple blow-up point with
\begin{equation}\label{tec20} u_i(x)\leq C_1 \abs{x-x_i}^{\frac{2}{p_i-1}}, \hsp \mbox{for all } x\in\Omega_+.
\end{equation}
Then there exists a constant $C=C(K,S,H,n,C_1)>0$ such that
\begin{equation}\label{tec4} u_i(x_i)u_i(x)\leq C\abs{x-x_i}^{2-n}, \hsp \mbox{for all } x\in B(x_i,1)_+.
\end{equation}
Moreover, there exist $a > 0$ and $b:\overline{(B_1)_+}\to\R$ satisfying 
\begin{equation} \label{eq:b}
\left\lbrace\begin{array}{ll}\Delta b=0 & \mbox{in } (B_1)_+ \\ \frac{\partial b}{\partial x_n} = 0 & \mbox{on } \partial_0(B_1)_+\end{array}\right.
\end{equation}
such that $u_i(x_i)u_i(x)\to a\abs{x}^{2-n}+b$ in $C^2_{loc}\left(\overline{(B_1)_+}\backslash\{0\}\right)$. Furthermore, 
for $\rho < 1$, in $\overline{(B_\rho)_+}$ one has that 
\begin{equation}\label{eq:1+o1}
	u_i = (1 + o_i(1) + o_\rho(1)) b_{\beta_i}; \qquad \nabla u_i = (1 + o_i(1) + o_\rho(1)) \nabla b_{\beta_i}, 
\end{equation}
with $\beta_i \to + \infty$, where $b_\beta$ is as in \eqref{lp4}. 
\end{proposition}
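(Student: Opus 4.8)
The strategy is the classical isolated–simple blow-up machinery of Schoen, Li, and Han–Li, adapted to the boundary setting with negative interior curvature. I break the argument into four steps: (i) a Harnack-type lower control near the blow-up point; (ii) the pointwise estimate \eqref{tec4}; (iii) extracting the limit $a|x|^{2-n}+b$; and (iv) identifying the local profile as a bubble $b_{\beta_i}$ with $\beta_i\to+\infty$.

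\textbf{Step 1: preliminary bounds.} Using the isolated–simple hypothesis together with the Harnack inequality of Proposition \ref{tec05}, one first shows the decay of the spherical averages $\bar u_i(r)$: since $\hat u_i(r)=r^{2/(p_i-1)}\bar u_i(r)$ has a unique critical point in $(0,\rho)$ and $\hat u_i$ is increasing near $0$ (because $v_i\to b_{\beta_0}$ locally), $\hat u_i$ must be decreasing on a fixed interval past the first critical radius; combined with the Harnack inequality this yields $u_i(x)\le C u_i(x_i)^{-1}\,|x-x_i|^{2-n}$ only on an annular region, which is then the starting point for a bootstrapped maximum-principle argument. Here the negativity of $K$ is a help rather than a hindrance: $-\Delta u_i = K_i u_i^{p_i}-Su_i$ so $u_i$ is \emph{subharmonic} away from where $S$ dominates, and one uses the operator $L(\phi)=C_n\Delta_g\phi + K_i u_i^{p_i-1}\phi$ together with the boundary condition, comparing $u_i(x_i)u_i$ against the harmonic function $|x-x_i|^{2-n}$. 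One must be careful that $Su_i$ is a lower-order term controlled by the already-known $L^{2^*}$ smallness of $u_i$ in small balls (standard $\e$-regularity for the blow-up).

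\textbf{Step 2: the pointwise estimate \eqref{tec4}.} This is the heart of the proof. Set $w_i(x)=u_i(x_i)u_i(x)$. From Step 1 one has the bound on annuli; to push it to all of $B(x_i,1)_+$ one argues by contradiction using the moving-sphere / Kelvin-transform technique, or more directly a barrier argument: one constructs a supersolution of the form $c\,|x-x_i|^{2-n}+(\text{correction})$ for the linearized boundary problem, valid because $\D_n(p)>1$ keeps the limiting bubble integrable at infinity with the stated decay \eqref{lp18}. The subcritical exponents $p_i<\frac{n+2}{n-2}$ must be tracked carefully — one absorbs the $u_i^{p_i}$ nonlinearity using $u_i\le C|x-x_i|^{-2/(p_i-1)}$ from \eqref{tec20}, and the gain $\frac{2}{p_i-1}>n-2$ for $i$ large is exactly what makes $K_i u_i^{p_i}$ an acceptable right-hand side. \textbf{I expect this step to be the main obstacle}, precisely because one must make the Schoen-type barrier work with the boundary Neumann-type condition $\frac{2}{n-2}\partial_\eta u_i = H_i u_i^{(p_i+1)/2}$ and with $S\not\equiv 0$; the boundary integral terms in the Pohozaev identity (Corollary \ref{pi4}, Proposition \ref{pri11}) are the tool that controls the sign of $b(0)$ and rules out a degenerate limit.

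\textbf{Steps 3 and 4: passing to the limit and identifying the profile.} Given \eqref{tec4}, the functions $w_i=u_i(x_i)u_i$ are uniformly bounded in $C^0$ on $\overline{(B_1)_+}\setminus\{0\}$; elliptic and boundary Schauder estimates (using $K_i\to K$ in $C^1$, $H_i\to H$ in $C^2$, and $p_i\to\frac{n+2}{n-2}$ so that $w_i^{p_i}u_i(x_i)^{-(p_i-1)}\to 0$ pointwise away from $0$) give $w_i\to w$ in $C^2_{loc}(\overline{(B_1)_+}\setminus\{0\})$, where $w$ is a nonnegative solution of $\Delta w=0$ in $(B_1)_+$, $\partial_{x_n}w=0$ on $\partial_0(B_1)_+$, singular at $0$. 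After even reflection across $\partial_0$, $w$ is harmonic on the punctured ball with a nonremovable singularity, hence $w(x)=a|x|^{2-n}+b(x)$ with $a\ge 0$ and $b$ harmonic; one shows $a>0$ by integrating the equation over a small half-sphere and using that $u_i(x_i)\to\infty$ forces a nontrivial flux. Finally, for the profile statement \eqref{eq:1+o1}: rescaling $u_i$ by its height at $x_i$ one knows from Lemma \ref{l:maxloc} that on $B(x_i,\bar R_i)$ the rescaled function converges to $b_{\beta_0}$; the estimate \eqref{tec4} upgrades this to convergence on a full fixed half-ball $(B_\rho)_+$ after rescaling, and tracking the scaling parameter shows the relevant bubble is $b_{\beta_i}$ with $\beta_i=\bar R_i\to+\infty$ (the bubble ``spreads out'' because in the original, unrescaled coordinates its concentration scale is $u_i(x_i)^{-(p_i-1)/2}\to 0$). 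The $o_i(1)+o_\rho(1)$ form of the error comes from combining the $C^2_{loc}$ convergence away from $0$ with the smallness of $w-a|x|^{2-n}$ for small $\rho$, i.e. $b(x)=b(0)+O(\rho)$.
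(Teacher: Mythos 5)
Your outline follows the same general route as the paper (the Schoen/Li/Han--Li isolated-simple machinery: Harnack, a barrier comparison via a maximum principle for the coupled interior/boundary operator, a Pohozaev identity, and a limit decomposition $a|x|^{2-n}+b$), but the step you yourself flag as ``the main obstacle'' is precisely where the real content lies, and your description of it is both incomplete and partly miscalibrated. The barrier argument does \emph{not} directly produce $u_i(x)\le C\,u_i(x_i)^{-1}|x-x_i|^{2-n}$, even on an annulus. What it produces first is the weaker bound $u_i(x_i)^{\lambda_i}u_i(x)\le C|x-x_i|^{2-n+\e_i}$ with $\lambda_i=(n-2-\e_i)\frac{p_i-1}{2}-1$ and $\e_i=O(\oR_i^{-2})$, using the comparison function $M_i|x-x_i|^{-\e_i}+\alpha u_i(x_i)^{-\lambda_i}|x-x_i|^{2-n+\e_i}$ on the semi-annulus $B(x_i,1)_+\setminus B(x_i,r_i)_+$. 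To upgrade $u_i(x_i)^{\lambda_i}$ to $u_i(x_i)$ one must show $u_i(x_i)^{\tau_i}=1+o_i(1)$, where $\tau_i=\frac{n+2}{n-2}-p_i$; this is exactly what the Pohozaev identity of Corollary \ref{pi4} is used for in this proposition (Lemma \ref{tec11}), not for ``controlling the sign of $b(0)$'' --- that is its role in the later argument that isolated blow-ups are simple. You have therefore misassigned the function of the key integral identity within this proof.

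Two further steps are missing entirely. First, even after the weak estimate and the control on $\tau_i$, the sharp bound \eqref{tec4} at $|x-x_i|=1$ requires a flux computation: one shows $u_i(x_i)\int_{\partial^+(B_1)_+}\partial_\eta u_i\to C<0$ (by integrating the equation and using the decay estimates to reduce to the bubble's flux), and separately $\overline{u_i}(1)^{-1}\int_{\partial^+(B_1)_+}\partial_\eta u_i\to a\int_{\partial^+(B_1)_+}\partial_\eta|x|^{2-n}<0$; comparing the two gives $\overline{u_i}(1)\le Cu_i(x_i)^{-1}$, and Harnack spreads this over the unit half-sphere. Second, to pass from $|x-x_i|=1$ to all radii $r_i<|x-x_i|<1$ one argues by contradiction: assuming $u_i(\tilde x_i)u_i(x_i)|\tilde x_i-x_i|^{n-2}\to+\infty$ for some intermediate $\tilde x_i$, one rescales by $\tilde r_i=|\tilde x_i-x_i|$, checks that the rescaled sequence again has an isolated simple blow-up at the origin (using the scale invariance of \eqref{tec02} and of the weighted radial averages), and applies the already-established case $|x|=1$ to reach a contradiction. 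Neither the flux identity nor this self-similar bootstrap appears in your proposal, and without them the ``moving-sphere or barrier'' suggestion does not close. The remaining parts (the decomposition $a|x|^{2-n}+b$ via even reflection, $a>0$ from the nonremovable singularity, and the identification of the profile $b_{\beta_i}$ on $(B_\rho)_+$ from the convergence of $K_iu_i(x_i)u_i^{p_i}$ and $H_iu_i^{(p_i+1)/2}$ to Dirac masses) are essentially as in the paper.
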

We split the proof of Proposition \ref{tec2} into some lemmas. As a preliminary result, we outline a version of the Maximum Principle  that can be found in \cite{HanLi}, pp. 539. 
\begin{lemma}\label{tec3} Let $\Omega$ be a bounded domain with piecewise smooth boundary $\partial\Omega = \partial_1\Omega\cup \partial_2\Omega$, $V\in L^\infty(\Omega)$ and $w\in L^\infty(\Omega)$. Suppose that $u\in C^2(\Omega)\cap C^1(\overline{\Omega})$, $u>0$, satisfies:
\begin{equation*}
\left\lbrace\begin{array}{ll} \Delta u + Vu \leq 0 & \mbox{in}\hsp \Omega \\ \frac{\partial u}{\partial \eta}-wu\geq 0 & \mbox{on}\hsp \partial_1\Omega\end{array}\right.
\end{equation*}
Then, for every $v\in C^2(\Omega)\cap C^1(\overline{\Omega})$ such that 
\begin{equation*}
\left\lbrace\begin{array}{ll} \Delta v + Vv \leq 0 & \mbox{in}\hsp \Omega \\ \frac{\partial v}{\partial \eta}-wv\geq 0 & \mbox{on}\hsp \partial_1\Omega \\ v\geq 0 & \mbox{on}\hsp \partial_2\Omega\end{array}\right.
\end{equation*}
we have $v\geq 0$ in $\overline{\Omega}$.
\end{lemma}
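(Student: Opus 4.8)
The natural route is the classical \emph{ground-state substitution}, following \cite{HanLi}. Recalling that $u>0$ on all of the compact set $\overline{\Omega}$, hence $u\ge c_0>0$ there, the quotient $\varphi:=v/u$ is well defined and belongs to $C^2(\Omega)\cap C^1(\overline{\Omega})$, and the assertion $v\ge 0$ is equivalent to $\varphi\ge 0$. The first step is to record the equation for $\varphi$: writing $v=\varphi u$ and expanding $\Delta(\varphi u)$, and using $v=\varphi u$ once more to absorb the $V$--terms, one gets in $\Omega$
\[
u\,\Delta\varphi+2\,\nabla u\cdot\nabla\varphi=(\Delta v+Vv)-\varphi\,(\Delta u+Vu).
\]
On the open set $\{\varphi<0\}\cap\Omega$ the right-hand side is $\le 0$, since $\Delta v+Vv\le 0$ by hypothesis while $-\varphi(\Delta u+Vu)\le 0$ because there $\varphi<0$ and $\Delta u+Vu\le 0$. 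Dividing by $u>0$ yields
\[
\Delta\varphi+b\cdot\nabla\varphi\le 0\quad\text{on }\{\varphi<0\}\cap\Omega,\qquad b:=\tfrac{2}{u}\nabla u\in L^\infty(\Omega),
\]
so $\varphi$ is a supersolution of a uniformly elliptic operator \emph{without zeroth-order term} on the region where it is negative.

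Next I would treat the boundary. On $\partial_1\Omega$ we have $u^2\,\partial_\eta\varphi=u\,\partial_\eta v-v\,\partial_\eta u$; inserting $\partial_\eta v\ge wv$ and $\partial_\eta u\ge wu$ gives $u^2\,\partial_\eta\varphi\ge uwv-v\,\partial_\eta u=-v\,(\partial_\eta u-wu)$, so at points of $\partial_1\Omega$ where $\varphi\le 0$ (equivalently $v\le 0$) one gets $\partial_\eta\varphi\ge 0$. On $\partial_2\Omega$ the hypothesis $v\ge 0$ gives directly $\varphi\ge 0$.

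Now suppose, for contradiction, that $m:=\min_{\overline{\Omega}}\varphi<0$. Since $\varphi\ge 0$ on $\partial_2\Omega$, the value $m$ is attained in $\Omega\cup\partial_1\Omega$. Applying the maximum principle for elliptic operators without zeroth-order term (see e.g.\ \cite[Ch.~3]{GilbargTrudinger}) to $\varphi$ on $D:=\{\varphi<0\}\cap\Omega$, the minimum of $\varphi$ over $\overline{D}$ is attained on $\partial D$; the portion of $\partial D$ lying inside $\Omega$ carries $\varphi=0\ne m$, so $m$ must be attained at some $x_0\in\partial\Omega$, and necessarily $x_0\in\partial_1\Omega$. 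At such a point (a smooth boundary point, so that an interior ball condition holds) Hopf's boundary point lemma applied to $\Delta\varphi+b\cdot\nabla\varphi\le 0$ in the neighbourhood $\{\varphi<0\}$ of $x_0$ forces $\partial_\eta\varphi(x_0)<0$, contradicting $\partial_\eta\varphi\ge 0$ on $\{\varphi\le 0\}\cap\partial_1\Omega$. If instead $\varphi\equiv m$ near $x_0$, the strong maximum principle \cite[Th.~3.5]{GilbargTrudinger} propagates this to the whole connected component of $\Omega$ containing it, whose boundary then meets $\partial_2\Omega$ (in the situations where this lemma is used, $\Omega$ is connected with $\partial_2\Omega\ne\emptyset$), again contradicting $\varphi\ge 0$ on $\partial_2\Omega$. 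In all cases we reach a contradiction, whence $m\ge 0$ and $v\ge 0$ on $\overline{\Omega}$.

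The main technical point is matching the interior and boundary arguments at the location of the minimum: one must guarantee that Hopf's lemma is applicable there — immediate at smooth boundary points, while at the finitely many non-smooth points of the piecewise-smooth boundary one either observes the minimum cannot be located there or argues by a routine approximation — and one must exclude the degenerate ``locally constant'' alternative by means of the strong maximum principle together with the standing assumption $\partial_2\Omega\neq\emptyset$. The sign bookkeeping in the quotient equation and in the boundary term is elementary once the substitution is set up, and it is precisely the cancellation of the $V$--terms that allows the scheme to run with merely $V\in L^\infty$.
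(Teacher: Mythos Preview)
Your proposal is correct and follows precisely the approach indicated in the paper, which simply states ``Apply the standard Maximum Principle to the function $w=\frac{v}{u}$.'' You have carried out in full the ground-state substitution argument that the paper only sketches in one line, with the correct sign bookkeeping in both the interior identity and the boundary computation.
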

\begin{proof}
Apply the standard Maximum Principle to the function $w = \frac{v}{u}$.
\end{proof}

In next lemma we give a less sharp version of \eqref{tec4}.

\begin{lemma}\label{tec5} Under the same hypotheses of Proposition \ref{tec2}, there exists $\e_i>0,$ $\e_i = O\left({\oR_i}^{-2}\right)$, such that 
\begin{equation*}
u_i(x_i)^{\lambda_i}u_i(x)\leq C\abs{x-x_i}^{2-n+\e_i},\hsp \mbox{for all } r_i\leq \abs{x-x_i}\leq 1,
\end{equation*}
with $\lambda_i = (n-2-\e_i)\frac{p_i-1}{2}-1$.
\end{lemma}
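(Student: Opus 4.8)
The strategy is the classical one from Schoen–Zhang / Han–Li, adapted to the boundary setting and the sign of $S$: construct an explicit comparison function and apply the boundary maximum principle of Lemma \ref{tec3} on the annular region $A_i = B(x_i,1)_+ \setminus B(x_i,r_i)_+$. First I would set, in geodesic normal coordinates centered at $x_i$ (so $x_i \simeq 0$), $M_i := u_i(x_i)$, and recall from Lemma \ref{l:maxloc} that on $B(x_i,r_i)_+$, after rescaling by $\delta_i = M_i^{-(p_i-1)/2}$, $u_i$ is $C^2$-close to the bubble $b_{\beta_0}$, which by \eqref{lp18} decays like $|x|^{2-n}$. From this one gets the two-sided bound on the sphere $\{|x-x_i| = r_i\}$, namely $u_i \sim C\,M_i\,\oR_i^{2-n}\,r_i^{2-n}\cdot$(something); combined with the Harnack inequality of Proposition \ref{tec05} and the pointwise bound \eqref{tec20}, this controls $u_i$ on the inner boundary $\partial^+ B(x_i,r_i)_+$.

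Next I would fix the exponent: choose $\e_i = O(\oR_i^{-2})$ and put $\lambda_i = (n-2-\e_i)\frac{p_i-1}{2} - 1$, and consider the comparison function
$$
\varphi_i(x) = A\, M_i^{\lambda_i - 1}\big(|x-x_i|^{2-n+\e_i} + |x-x_i|^{2-n-\e_i}\big)
$$
(or a single-power variant if the geometry permits), for a large constant $A$ to be fixed. The point of the two conjugate exponents $2-n\pm\e_i$ is that the Euclidean operator $\Delta$ applied to $|x|^{2-n+\e_i}$ produces a term of order $\e_i(\,\cdot\,)|x|^{-n+\e_i}$ with a favorable sign, which must dominate the perturbation $|K_i| u_i^{p_i-1} \lesssim |x-x_i|^{-2}$ coming from \eqref{tec20}: here one uses crucially that $p_i - 1 < \frac{4}{n-2}$ so that $(p_i-1)\cdot\frac{2}{p_i-1} = 2$ exactly, making the competition scale-invariant, and that $\e_i$ though small is of the right size $O(\oR_i^{-2})$ to win on the annulus $r_i \le |x-x_i| \le 1$ (where $|\log|x-x_i||$ stays bounded by $C\oR_i$ after rescaling — this is where $\e_i \sim \oR_i^{-2}$ enters). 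One checks the linear differential inequality $-\Delta \varphi_i + (\text{lower order})\,\varphi_i \ge -\Delta u_i + \cdots$ holds in $A_i$, the Neumann-type inequality $\frac{\partial \varphi_i}{\partial \eta} - w_i \varphi_i \ge 0 \ge \frac{\partial u_i}{\partial \eta} - w_i u_i$ holds on $\partial_0 A_i$ with $w_i = \frac{n-2}{2} H_i u_i^{(p_i-1)/2}$ (using $H$ bounded and $u_i^{(p_i-1)/2} \to 0$ away from $x_i$), and that $\varphi_i \ge u_i$ on both spherical boundary components (outer: $u_i$ bounded, $\varphi_i \gtrsim M_i^{\lambda_i-1} \to \infty$; inner: the bubble asymptotics above). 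Then Lemma \ref{tec3} gives $u_i \le \varphi_i$ on $A_i$, and multiplying through by $M_i^{\lambda_i}$ and discarding the subdominant $|x-x_i|^{2-n-\e_i}$ term (which is the smaller one for $|x-x_i| \le 1$) yields the claim.

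The main obstacle I expect is verifying the differential inequality in $A_i$ with the correct sign and the correct power of $M_i$ simultaneously: one must track the metric error terms $g^{jk} = \delta^{jk} + O(|x|^2)$, the first-order terms $\partial(\sqrt{|g|}g^{jk})$, and the linear term $\frac{1}{C_n}\sqrt{|g|} S$ — and here the hypothesis on the sign of $S$ is used, since for the comparison to close cleanly one wants the zeroth-order coefficient to have the right sign (this is exactly the place where the present paper departs from \cite{li95}, \cite{djadlimalchiodiahmedou}, which treat positive scalar curvature). A secondary delicate point is the calibration between $r_i = \oR_i M_i^{-(p_i-1)/2}$, the choice $\e_i = O(\oR_i^{-2})$, and the bubble estimate on $\{|x-x_i| = r_i\}$: one needs $A$ and $\e_i$ chosen so that $\varphi_i \ge u_i$ there \emph{uniformly in $i$}, which forces $\e_i$ not to go to zero too fast relative to $\oR_i$. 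Once these quantitative matchings are in place, the rest is the routine maximum-principle comparison.
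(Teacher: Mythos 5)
Your overall strategy --- an explicit comparison function plus the boundary maximum principle of Lemma \ref{tec3} on the annulus $B(x_i,1)_+\setminus B(x_i,r_i)_+$, with the inner sphere controlled by the bubble asymptotics of Lemma \ref{l:maxloc} --- is exactly the paper's. But three of the quantitative choices at the heart of the comparison are wrong or missing, and each one breaks the argument.

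First, the smallness of the potential. You propose to beat $|K_i|u_i^{p_i-1}\lesssim |x-x_i|^{-2}$ (deduced from \eqref{tec20}) by the term $-\Delta|x-x_i|^{2-n+\e_i}=\e_i(n-2-\e_i)|x-x_i|^{-n+\e_i}$, and you say that $\e_i=O(\oR_i^{-2})$ is ``of the right size to win''. The logic is inverted: the good term is \emph{proportional to} $\e_i$, so shrinking $\e_i$ only hurts, and a potential $C|x-x_i|^{-2}$ with $C$ of order one can never be dominated. What is actually needed, and what the paper proves first (its \eqref{tec7}), is that the \emph{constant} in front of $|x-x_i|^{-2}$ is itself $O(\oR_i^{-2})$; this does not follow from \eqref{tec20} but from the isolated-\emph{simple} hypothesis (monotonicity of the weighted averages on $(r_i,1)$) combined with the bubble value $u_i\leq Cu_i(x_i)\oR_i^{2-n}$ on $\{|x-x_i|=r_i\}$. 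Only then may one afford $\e_i=O(\oR_i^{-2})$. Second, your auxiliary power $|x-x_i|^{2-n-\e_i}$ corresponds to $\mu=n-2+\e_i\notin(0,n-2)$, so $-\Delta|x|^{-\mu}=\mu(n-2-\mu)|x|^{-\mu-2}<0$: that term has the wrong sign in the interior inequality $\mathfrak{L}_i(\varphi_i)\leq 0$ and cannot be fixed without a lower bound on the potential; the correct conjugate pair of exponents is $\{-\e_i,\ 2-n+\e_i\}$. Third, the normalization $M_i^{\lambda_i-1}$ with $M_i=u_i(x_i)$ cannot give the lemma: you would end with $u_i(x_i)^{\lambda_i}u_i\leq C\,u_i(x_i)^{2\lambda_i-1}|x-x_i|^{2-n+\e_i}$ and $2\lambda_i-1\to 1$, so the prefactor blows up. With the correct normalization $u_i(x_i)^{-\lambda_i}$ the comparison function tends to $0$ on $\{|x-x_i|=1\}$, where $u_i$ is only known to be $O(1)$, so the outer boundary condition fails. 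This is precisely why the paper's comparison function carries the extra summand $\bigl(\max_{\partial^+(B_1)_+}u_i\bigr)\,|x-x_i|^{-\e_i}$, and why it needs a separate closing step (its \eqref{tec10}): one more use of the monotonicity of the weighted radial averages, together with Lemma \ref{tec15} later on, to show a posteriori that $\max_{\partial^+(B_1)_+}u_i\leq Cu_i(x_i)^{-\lambda_i}$, which absorbs that summand and yields the stated estimate. Your plan is missing both the extra term and this absorption step.
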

\begin{proof} We  follow some ideas in \cite{li95}, together with the application of Lemma \ref{tec3}, as it is done in \cite{HanLi}.
Recall first the definition of $r_i$ in Lemma \ref{l:maxloc}. 
	
\

To begin, we claim that, for $\abs{x-x_i}=r_i$, the following bound holds
\begin{equation}\label{tec6} u_i(x)\leq C u_i(x_i){\oR_i}^{2-n}. 
\end{equation}
Indeed, rescaling back the second statement of Lemma \ref{l:maxloc}, we have
\begin{align*}
u_i(x)&\leq \frac{1}{u_i(x_i)}\frac{C}{\abs{x-x_i}^{n-2}} = \frac{1}{u_i(x_i)}\frac{C}{{r_i}^{n-2}} \\&= C{\oR_i}^{2-n}u_i(x_i)^{(n-2)\left(\frac{p_i-1}{2}\right)-1} \leq C{\oR_i}^{2-n}u_i(x_i).
\end{align*}
Consider the function $\tilde{u}_i:r\mapsto r^{\frac{1}{p_i-1}}\overline{u_i}(r)$. By our assumptions, it has a unique critical point in $(0,1)$. Moreover, by Lemma \ref{l:maxloc} we can always assume that $i$ is so large that the only critical point stays in $(0,r_i)$). Therefore, we can suppose without loss of generality that $\tilde{u}_i$ is strictly decreasing in $r_i<r<1$. Then,
\begin{align*}
\abs{x-x_i}^{\frac{2}{p_i-1}}u_i(x)\leq Cr_i^{\frac{2}{p_i-1}}\overline{u}_i(r_i)=C{\oR_i}^{\frac{2}{p_i-1}}u_i(x_i)^{-1}\fint_{\partial^+B(x_i,r_i)_+}u_i. 
\end{align*}
Using \eqref{tec6}, we conclude that 
\begin{equation*}
\abs{x-x_i}^{\frac{2}{p_i-1}}u_i(x)\leq C{\oR_i}^{-\frac{2}{p_i-1}}.
\end{equation*}
Consequently,
\begin{equation}\label{tec7}u_i(x)^{p_i-1}\leq C{\oR_i}^{-2+o_i(1)}\frac{1}{\abs{x-x_i}^2},\hsp \mbox{for all } r_i\leq \abs{x-x_i}\leq 1.
\end{equation}
Next, we apply a comparison argument. Consider the second-order elliptic operator
\begin{equation*}
\mathfrak{L}_i(\varphi)=\Delta\varphi + \frac{n-2}{4(n-1)}\left(K_iu_i^{p_i-1}+S\right)\varphi,
\end{equation*}
and the boundary operator 
\begin{equation*}
\mathfrak{B}_i(\varphi)=\frac{\partial \varphi}{\partial \eta}-\frac{n-2}{2}H_iu_i^{\frac{p_i-1}{2}}\varphi.
\end{equation*}
Since $u_i > 0$ solves \eqref{tfgc1}, $\mathfrak{L}_i(u_i)=\mathfrak{B}_i(u_i)=0$. Then, the pair $(\mathfrak{L}_i,\mathfrak{B}_i)$ satisfies the Maximum Principle stated in Lemma \ref{tec3}. We start constructing an adequate function to be compared with $u_i$. Considering $x\mapsto\abs{x-x_i}^{-\mu}$, it is clear that
\begin{equation*}
\Delta \abs{x-x_i}^{-\mu} = -\mu(n-2-\mu)\abs{x-x_i}^{-\mu-2}.
\end{equation*}
Then
\begin{align*}
\mathfrak{L}_i(\abs{x-x_i}^{-\mu})&=-\mu(n-2-\mu)\frac{1}{\abs{x-x_i}^{\mu+2}}\\&+C_n^{-1}K_iu_i^{p_i-1}\frac{1}{\abs{x-x_i}^\mu}-C_n^{-1}S\frac{1}{\abs{x-x_i}^{\mu}}.
\end{align*}
Using \eqref{tec7}, 
\begin{equation*}
\mathfrak{L}_i(\abs{x-x_i}^{-\mu}) = \left(-\mu(n-2-\mu)+C_n^{-1}K_i{\oR_i}^{-2}\right)\frac{1}{\abs{x-x_i}^{\mu+2}}-C_n^{-1}R_i\frac{1}{\abs{x-x_i}^{\mu}}.
\end{equation*}
Then, we can choose $\e_i \searrow 0,$ $\e_i = O\left({\oR_i}^{-2}\right)$ such that
\begin{equation}
\mathfrak{L}_i(\abs{x-x_i}^{-\e_i})\leq 0, \hsp \mbox{and}\hsp \mathfrak{L}_i(\abs{x-x_i}^{2-n+\e_i})\leq 0 \label{tec8}.
\end{equation}
Moreover, notice that
\begin{align*}
\mathfrak{B}_i(\abs{x-x_i}^{-\mu}) &= \mu \frac{x^n}{\abs{x-x_i}^{\mu+2}}-\frac{n-2}{2}H_iu_i^{\frac{p_i-1}{2}}\frac{1}{\abs{x-x_i}^{\mu}} \\&= \mu \frac{x^n}{\abs{x-x_i}^{\mu+2}} + O\left({\oR_i}^{-2}\right)\frac{1}{\abs{x-x_i}^{\mu}},
\end{align*}
again by \eqref{tec7}. Reasoning as above, we also have
\begin{equation}
\mathfrak{B}_i(\abs{x-x_i}^{-\e_i})\geq 0, \hsp \mbox{and}\hsp \mathfrak{B}_i(\abs{x-x_i}^{2-n+\e_i})\geq 0. \label{tec9}
\end{equation}
Set $M_i = \max_{\partial^+(B_1)_+}u_i$, $\lambda_i = (n-2+\e_i)\frac{p_i-1}{2}-1$ and define 
\begin{equation*}
\varphi_i = M_i\abs{x-x_i}^{-\e_i}+\alpha u_i(x_i)^{-\lambda_i}\abs{x-x_i}^{2-n+\e_i}, \hsp \mbox{for all } r_i \leq \abs{x-x_i}\leq 1,
\end{equation*}
with $\alpha>0$ yet to be determined. In order to apply the Maximum Principle and compare $\varphi_i$ with $u_i$, we choose as domain the semi-annulus $A=B(x_i,1)_+\backslash B(x_i,r_i)_+$. Observe that $\partial^+A$ is composed by two semi-spheres, namely, 
\begin{equation*}
\partial^+ A = \{x\in A:\abs{x-x_i}=1\}\sqcup\{x\in A:\abs{x-x_i}=r_i\}.
\end{equation*}
By \eqref{tec8} and \eqref{tec9}, $\mathfrak{L}_i(\varphi_i-u_i)\leq 0$ in $A$ and $\mathfrak{B}_i(\varphi_i-u_i)\geq 0$ on $\partial_0 A$. Then, we just need to prove that $\varphi_i \geq u_i$ on the circular boundaries.

\

If $\abs{x-x_i}=1,$ then $\varphi_i = M_i +\alpha u_i(x_i)^{-\lambda_i}\geq u_i$ by the choice of $M_i$.

\

On the other hand, if $\abs{x-x_i}=r_i$, then
\begin{equation*}
\varphi_i = M_i{\oR_i}^{-\e_i}u_i(x_i)^{\e_i\frac{p_i-1}{2}}+\alpha u_i(x_i)^{1+o_i(1)}{\oR_i}^{2-n+o_i(1)}. 
\end{equation*}
By \eqref{tec6}, it is possible to choose $\alpha>0$ large enough so that $\varphi_i\geq u_i$ for $\abs{x-x_i}=r_i.$ Then, the application of Lemma \ref{tec3} to $\varphi_i-u_i$ gives:
\begin{equation*}
u_i(x)\leq M_i\abs{x-x_i}^{-\e_i}+\alpha u_i(x_i)^{-\lambda_i}\abs{x-x_i}^{2-n+\e_i}.
\end{equation*}
Finally, using the monotonicity of the weighed radial averages, for every $r_i<\theta<1$ we obtain 
\begin{align}\label{tec10}
M_i \leq C\overline{u_i}(1)\leq C\theta^{\frac{p_i-1}{2}}\overline{u_i}(\theta)\leq C\theta^{\frac{p_i-1}{2}}\left(M_i\theta^{-\e_i}+\alpha u_i(x_i)^{-\lambda_i}\theta^{2-n+\e_i}\right).
\end{align}
We can choose $\theta$ small enough (up to taking a larger $i$) and derive that 
\begin{equation*}
M_i\leq Cu_i(x_i)^{-\lambda_i}.
\end{equation*}
The proof can be concluded applying the Harnack inequality and absorbing the first term of the right-hand side of \eqref{tec10}.
\end{proof}
\begin{lemma}\label{tec11}Under the assumptions of Proposition \ref{tec2},
\begin{equation*}
\tau_i := \frac{n+2}{n-2} - p_i = O\left(u_i(x_i)^{\frac{-2}{n-2}+o_i(1)}\right),\hsp\mbox{as}\hsp i\to +\infty.
\end{equation*}
Therefore,
\begin{equation*}
u_i(x_i)^{\tau_i} = 1+o_i(1).
\end{equation*}
\end{lemma}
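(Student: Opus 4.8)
The plan is to extract the decay rate of $\tau_i = \frac{n+2}{n-2}-p_i$ from a Pohozaev-type identity applied to the rescaled bubble, exploiting that $u_i$ looks like $b_{\beta_i}$ near $x_i$ up to scale $r_i = \oR_i u_i(x_i)^{-\frac{p_i-1}{2}}$. First I would recall that, by Lemma \ref{l:maxloc}, after rescaling by $\delta_i = u_i(x_i)^{-\frac{p_i-1}{2}}$ the functions $v_i(x) = \delta_i^{\frac{2}{p_i-1}}u_i(\delta_i x + x_i) = u_i(x_i)^{-1}u_i(\delta_i x + x_i)$ converge in $C^2(B(0,\oR_i))$ to the standard bubble $b_{\beta_0}$. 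I would then write down the Pohozaev identity of Corollary \ref{pi4} (or equivalently Lemma \ref{pi1} with $F = X$ the radial field) for $u_i$ on the half-ball $B(x_i,r_i)_+$, with $\mathfrak{f} = C_n^{-1}K_i$, $\mathfrak{g} = C_n^{-1}S$, $p = p_i$, $q = \frac{p_i+1}{2}$, $\mathfrak{h} = \frac{n-2}{2}H_i$. The key point is that the exponent mismatch $\frac{n}{p+1} - \frac{n-2}{2}$ — which vanishes exactly when $p = \frac{n+2}{n-2}$ — produces a term proportional to $\tau_i \int_{B(x_i,r_i)_+} |K_i| u_i^{p_i+1}$, and this must be balanced against boundary terms $\int_{\partial^+\Omega} B(u_i,\nabla u_i)$ and the lower-order volume integrals involving $\nabla K_i$, $S$, $\nabla H_i$.

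\textbf{Key estimates.} The main work is to estimate each term in \eqref{pi9} at scale $r_i$. Using the pointwise bound \eqref{tec20} (equivalently \eqref{tec02}) together with the refined bound \eqref{tec4} — which at this point in the paper is already available through Lemma \ref{tec5}, or I can run the two arguments in the right order — one checks: the interior term $\int_{B(x_i,r_i)_+} |K_i| u_i^{p_i+1}$ is comparable to $u_i(x_i)^{2} r_i^{n} \cdot u_i(x_i)^{p_i - \frac{n+2}{n-2}} \cdot (\cdots)$; after rescaling it is comparable to $u_i(x_i)^{\tau_i \cdot \frac{?}{}}$ times $\int_{\R^n_+} |K(0)| b_{\beta_0}^{2^*} > 0$, i.e. bounded away from zero. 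The gradient term $\int_{\partial^+ B(x_i,r_i)_+} B(u_i,\nabla u_i)$, by the scaling of $b_{\beta_0}$ and its asymptotics \eqref{lp18}, is $O(u_i(x_i)^{-2+o_i(1)})$ — this is exactly the point where Proposition \ref{pri11}-type cancellation of $B(G,\nabla G)$ for $G = a|x|^{2-n}$ is used: the leading radial part contributes zero and only the $O(r_i)$ correction survives, producing the power $u_i(x_i)^{-\frac{2}{n-2}+o_i(1)}$. The terms with $\nabla K_i$, $\nabla H_i$ and $S$ are lower order by a further power of $r_i$ (hence of $u_i(x_i)^{-\frac{2}{n-2}}$ up to $o_i(1)$ in the exponent). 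Collecting, the Pohozaev balance reads, schematically,
\begin{equation*}
\tau_i \cdot c_0 (1 + o_i(1)) = O\!\left(u_i(x_i)^{-\frac{2}{n-2}+o_i(1)}\right),
\end{equation*}
with $c_0 = \frac{n-2}{2n}\int_{\R^n_+}|K(0)|b_{\beta_0}^{2^*}$ (after dividing by $u_i(x_i)^{\tau_i} = 1 + o_i(1)$, which is circular-looking but handled by the usual bootstrap: $\tau_i \to 0$ forces $u_i(x_i)^{\tau_i}\to 1$ a posteriori). This yields $\tau_i = O(u_i(x_i)^{-\frac{2}{n-2}+o_i(1)})$, and then $u_i(x_i)^{\tau_i} = \exp(\tau_i \log u_i(x_i)) = \exp(O(u_i(x_i)^{-\frac{2}{n-2}+o_i(1)}\log u_i(x_i))) = 1 + o_i(1)$.

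\textbf{Main obstacle.} The delicate point is controlling the boundary term $\int_{\partial^+ B(x_i,r_i)_+} B(u_i,\nabla u_i)$ with the correct power of $u_i(x_i)$, rather than a crude bound. This requires knowing not just the pointwise size of $u_i$ on $\partial^+ B(x_i,r_i)_+$ but also that $\nabla u_i$ there is well-approximated by $\nabla b_{\beta_i}$ in a scale-invariant sense, so that the quadratic expression $B$ inherits the exact cancellation observed for pure radial profiles $a|x|^{2-n}$; the $C^2$ convergence up to the moving scale $\oR_i$ from Lemma \ref{l:maxloc}, combined with Lemma \ref{tec5}, is exactly what makes this rigorous. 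A secondary subtlety is the apparent circularity with $u_i(x_i)^{\tau_i}$, resolved by first proving $\tau_i \to 0$ (immediate since $p_i \to \frac{n+2}{n-2}$) and only then upgrading to the rate.
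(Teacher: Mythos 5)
Your overall strategy is the paper's: apply the Pohozaev identity of Corollary \ref{pi4}, isolate the term whose coefficient vanishes like $\tau_i$, and bound everything else by $O(u_i(x_i)^{-\frac{2}{n-2}+o_i(1)})$ using the bubble asymptotics and Lemma \ref{tec5}. However, there are two concrete gaps. First, the choice of domain: you apply the identity on $B(x_i,r_i)_+$ with $r_i=\oR_i u_i(x_i)^{-\frac{p_i-1}{2}}$, but then claim $\int_{\partial^+B(x_i,r_i)_+}B(u_i,\nabla u_i)=O(u_i(x_i)^{-2+o_i(1)})$. On that sphere this is false: rescaling by $\delta_i=u_i(x_i)^{-\frac{p_i-1}{2}}$ shows this boundary term equals $u_i(x_i)^{O(\tau_i)}\int_{\partial^+B_{\oR_i}}B(b_\beta,\nabla b_\beta)+o(1)$, which is only $O(\oR_i^{1-n})$ (even after exploiting the cancellation $B(a|x|^{2-n},\nabla(a|x|^{2-n}))=0$). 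Since $\oR_i\to+\infty$ at an unspecified rate with no quantitative link to $u_i(x_i)$, this cannot be converted into the power $u_i(x_i)^{-\frac{2}{n-2}+o_i(1)}$, and the argument stalls. The paper instead takes $\Omega=B(x_i,1)_+$ of fixed radius, where Lemma \ref{tec5} (plus elliptic estimates for the gradient) gives $u_i,|\nabla u_i|=O(u_i(x_i)^{-1+o_i(1)})$ on $\partial^+\Omega$, hence $\int_{\partial^+\Omega}B(u_i,\nabla u_i)=O(u_i(x_i)^{-2+o_i(1)})$, which is admissible.

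Second, the coefficient multiplying $\tau_i$ is not $c_0=\frac{n-2}{2n}\int_{\R^n_+}|K(0)|b_{\beta}^{\dst}$. In \eqref{pi9} \emph{both} the interior term $\big(\tfrac{n}{p_i+1}-\tfrac{n-2}{2}\big)\int\mathfrak{f}u^{p_i+1}$ and the boundary term $2(n-1)\big(\tfrac{n-2}{2}-\tfrac{n-1}{q_i+1}\big)\int\mathfrak{h}u^{q_i+1}$ have coefficients vanishing like $\tau_i$ and integrals converging to finite nonzero limits, so the relevant constant is the combination $\int_{\R^n_+}K(0)b_\beta^{\dst}+2(n-1)\int_{\partial\R^n_+}H(0)b_\beta^{\dsh}$. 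Since $K(0)<0$ while $H(0)>0$ on $\cS_1$, the two contributions have opposite signs, and without further argument the combination could a priori vanish, destroying the estimate. The paper resolves this by multiplying the bubble equation by $b_\beta$ and integrating by parts, obtaining that the combination equals $C_n\int_{\R^n_+}|\nabla b_\beta|^2>0$ (identity \eqref{tec13}); this non-degeneracy step is missing from your proposal. Your handling of the apparent circularity in $u_i(x_i)^{\tau_i}$ is fine and matches the paper.
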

\begin{proof}
We apply Corollary \ref{pi4} for $u(x)=u_i(x+x_i)$ on $\Omega = B(0,1)$, and estimate all the terms using Lemmas \ref{l:maxloc} and \ref{tec5}:
\begin{align*}
\int_{\partial^+\Omega_+}\abs{K_i}{u_i}^{p_i+1}&\leq C\int_{\partial^+\Omega_+}\abs{K_i}u_i(x_i)^{\lambda_i(p_i+1)}\abs{x}^{(2-n+\e_i)(p_i+1)} \\ &= O\left( u_i(x_i)^{-(p_i+1)+o_i(1)}\right).
\end{align*}
Analogously,
\begin{align*}
\int_{\partial^+\Omega_+}\abs{S}{u_i}^2 &= O\left(u_i(x_i)^{-2+o_i(1)}\right), \\ \int_{\partial(\partial_0\Omega_+)}\abs{H_i}{u_i}^{\frac{p_i+3}{2}}&=O\left(u_i(x_i)^{-\frac{p_i+3}{2}+o_i(1)}\right).
\end{align*}
In order to simplify the notation, let $\delta_i=u_i(x_i)^{-\frac{p_i-1}{2}}$. We now bound the gradient terms using a rescaling argument:
\begin{align*}
&\int_{\Omega_+}u_i(x+x_i)^{p_i+1}\abs{X\cdot \nabla K_i(x+x_i)}dx \\&= u_i(x_i)^{-\frac{2}{n-2}+o_i(1)}\int_{\left(B_{\delta_i^{-1}}\right)_+}v_i(y)^{p_i+1}Y\cdot\nabla K_i(\delta_ix+x_i)dy,
\end{align*}
where $v_i(y)=u_i(\delta_iy+x_i)$. By Lemma \ref{l:maxloc}, $v_i\to b_\beta$ in $C^2_{loc}\left((B_{\delta_i^{-1}})_+\right)$ for some $\beta>0$. This, together with the asymptotic behaviour of $b_\beta$ described in \eqref{lp18} and the fact that $K_i(\delta_i x + x_i)\to K(0)$ uniformly, gives:
\begin{equation*}
\int_{\Omega_+}u_i(x+x_i)^{p_i+1}\abs{X\cdot \nabla K_i(x+x_i)}dx = O\left(u_i(x_i)^{-\frac{2}{n-2}+o_i(1)}\right). 
\end{equation*}
In the same way, we obtain:
\begin{align*}
\int_{\Omega_+}u_i(x+x_i)^2 \abs{X\cdot \nabla S(x+x_i)}dx &= O\left(u_i(x_i)^{-\frac{6}{n-2}+o_i(1)}\right), \\ \int_{\partial_0\Omega_+}u_i(x+x_i)^{\frac{p_i+3}{2}}\abs{X\cdot \nabla H_i(x+x_i)}dx &=O\left(u_i(x_i)^{-\frac{2}{n-2}+o_i(1)}\right),
\end{align*}
and, using the same argument,
\begin{equation*}
\int_{\Omega_+}\abs{S(x+x_i)}u_i(x+x_i)^2dx = O\left(u_i(x_i)^{-\frac{4}{n-2}+o_i(1)}\right).
\end{equation*}
Moreover, by standard elliptic theory, Proposition \ref{pri11} and Lemma \ref{tec5}, we have:
\begin{equation*}
\int_{\partial^+\Omega_+}B_1(u_i,\nabla u_i) = O\left(u_i(x_i)^{-2+o_i(1)}\right).
\end{equation*}
Again, by a rescaling argument we see that
\begin{align}
&\tau_i\left(\int_{B(x_i,1)_+}K_iu_i^{p_i+1}+2(n-1)\int_{\partial_0B(x_i,1)_+}H_iu_i^{\frac{p_i+3}{2}}\right) \nonumber\\ &= \tau_i\left(\int_{\R^n_+}K(0){b_\beta}^\dst+2(n-1)\int_{\partial\R^n_+}H(0){b_\beta}^{\dsh}+o_i(1)\right).\label{tec12}
\end{align}
We need to check that the coefficient of $\tau_i$ is positive. With this in mind, we recall that $b_\beta$ solves the problem
\begin{equation*}
\left\lbrace\begin{array}{ll}-\frac{4(n-1)}{n-2}\Delta b_\beta = K(0){b_\beta}^{\frac{n+2}{n-2}} & \mbox{in } \R^n_+, \\ \frac{2}{n-2}\frac{\de b_\beta}{\de \eta}=H(0){b_\beta}^{\frac{n}{n-2}}& \mbox{on }\de\R^n_+.\end{array}\right.
\end{equation*}
Multiplying by $b_\beta$ and integrating by parts, we obtain
\begin{equation}\label{tec13}
\int_{\R^n_+}K(0){b_\beta}^\dst+2(n-1)\int_{\partial\R^n_+}H(0){b_\beta}^{\dsh} = \frac{4(n-1)}{n-2}\int_{\R^n_+}\abs{\nabla b_\beta}^2 > 0.
\end{equation}
The combination of \eqref{tec12}, \eqref{tec13} and Corollary \ref{pi4} finally gives
\begin{equation}\label{tec14}
\tau_i \left(\int_{\R^n_+}\abs{\nabla b_\beta}^2+o_i(1)\right)=O\left(u_i(x_i)^{-\frac{2}{n-2}+o_i(1)}\right).
\end{equation}
This concludes the proof. 
\end{proof}
\begin{lemma}\label{tec15}Under the assumptions of Proposition \ref{tec2}, there holds:
\begin{equation*}
u_i(x_i)\int_{\partial^+(B_1)_+}\frac{\de u_i}{\de\eta} \to C  < 0,
\end{equation*}
for a negative constant $C = C(n,\beta)$.
\end{lemma}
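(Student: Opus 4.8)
The plan is to convert the boundary flux into interior and boundary integrals by the divergence theorem, to pass to the limit by blowing up at $x_i$, and to identify the limit through the decay of the limiting bubble. \emph{First,} working in geodesic normal coordinates centred at $x_i$ — in which, as remarked above, the metric may be treated as Euclidean up to lower–order corrections — I integrate \eqref{tfgc1} over $(B_1)_+$. The divergence theorem, the relation $\Delta u_i=\frac{n-2}{4(n-1)}(Su_i-K_iu_i^{p_i})$, and the boundary condition (the outer normal of $(B_1)_+$ along $\partial_0(B_1)_+$ being $-e_n$) give
\begin{equation*}
\int_{\partial^+(B_1)_+}\frac{\partial u_i}{\partial\eta}=\frac{n-2}{4(n-1)}\int_{(B_1)_+}\big(Su_i-K_iu_i^{p_i}\big)-\frac{n-2}{2}\int_{\partial_0(B_1)_+}H_iu_i^{\frac{p_i+1}{2}}+o_i(1),
\end{equation*}
the remainder collecting the metric corrections, which is $o_i(1)$ after multiplication by $u_i(x_i)$ by the estimates below.

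\emph{Then,} set $\delta_i=u_i(x_i)^{-\frac{p_i-1}{2}}$ and $v_i(y)=u_i(x_i)^{-1}u_i(\delta_iy+x_i)$, so $v_i\to b_{\beta_0}$ in $C^2_{loc}(\overline{\R^n_+})$ by Lemma \ref{l:maxloc}. Changing variables and using $u_i(x_i)^{1+p_i}\delta_i^{\,n}=u_i(x_i)^{1+\frac{p_i+1}{2}}\delta_i^{\,n-1}=u_i(x_i)^{\frac{n-2}{2}\tau_i}$ with $\tau_i=\frac{n+2}{n-2}-p_i$, these prefactors tend to $1$ since $u_i(x_i)^{\tau_i}\to1$ (Lemma \ref{tec11}), while the tail of each rescaled integral is dominated, uniformly in $i$, by an integrable function, thanks to the pointwise bound of Lemma \ref{tec5} (which rescales to $v_i(y)\le C|y|^{2-n+\e_i}$ on $\oR_i\le|y|\le\delta_i^{-1}$, $\e_i\to0$). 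Hence, by dominated convergence together with $K_i\to K$, $H_i\to H$,
\begin{equation*}
u_i(x_i)\int_{(B_1)_+}K_iu_i^{p_i}\to\int_{\R^n_+}K(0)b_{\beta_0}^{\frac{n+2}{n-2}},\qquad u_i(x_i)\int_{\partial_0(B_1)_+}H_iu_i^{\frac{p_i+1}{2}}\to\int_{\partial\R^n_+}H(0)b_{\beta_0}^{\frac{n}{n-2}}.
\end{equation*}
The term $u_i(x_i)\int_{(B_1)_+}Su_i$ is absent when $S=0$ (the setting of Theorem \ref{minmax}); for $S\le0$ the same rescaling shows it remains bounded with non-positive limit, which only reinforces the conclusion.

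\emph{Finally,} by the previous step $u_i(x_i)\int_{\partial^+(B_1)_+}\frac{\partial u_i}{\partial\eta}$ converges to
\begin{equation*}
-\frac{n-2}{4(n-1)}\int_{\R^n_+}K(0)b_{\beta_0}^{\frac{n+2}{n-2}}-\frac{n-2}{2}\int_{\partial\R^n_+}H(0)b_{\beta_0}^{\frac{n}{n-2}}=\int_{\R^n_+}\Delta b_{\beta_0}-\frac{n-2}{2}\int_{\partial\R^n_+}H(0)b_{\beta_0}^{\frac{n}{n-2}},
\end{equation*}
using $\Delta b_{\beta_0}=-\frac{n-2}{4(n-1)}K(0)b_{\beta_0}^{\frac{n+2}{n-2}}$. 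Integrating $\Delta b_{\beta_0}$ over $\R^n_+$ by parts: the flux through $\partial^+(B_R)_+$ tends, as $R\to\infty$, to $-\frac{n-2}{2}\,\omega_{n-1}\lim_{|x|\to\infty}|x|^{n-2}b_{\beta_0}(x)$ by \eqref{lp18}, while the flux through $\partial\R^n_+$ equals $\frac{n-2}{2}\int_{\partial\R^n_+}H(0)b_{\beta_0}^{\frac{n}{n-2}}$ by the boundary condition for $b_{\beta_0}$. The two integrals over $\partial\R^n_+$ cancel, whence
\begin{equation*}
u_i(x_i)\int_{\partial^+(B_1)_+}\frac{\partial u_i}{\partial\eta}\longrightarrow C:=-\frac{n-2}{2}\,\omega_{n-1}\lim_{|x|\to\infty}|x|^{n-2}b_{\beta_0}(x)=-\frac{n-2}{2}\,\omega_{n-1}\big(n(n-2)\big)^{\frac{n-2}{2}}\beta_0^{\frac{n-2}{2}}<0,
\end{equation*}
a constant depending only on $n$ and $\beta_0$ (for $S<0$ an extra non-positive term is present, so $C<0$ still holds). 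In particular the limit does not depend on the subsequence extracted in Lemma \ref{l:maxloc}, so the full sequence converges.

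\emph{The main difficulty} lies in the passage to the limit of the previous paragraph: the integrals are over the \emph{fixed} half-ball $(B_1)_+$, so one must control uniformly in $i$ the contribution of $\{|x-x_i|\gtrsim r_i\}$ and keep track carefully of the powers of $u_i(x_i)$, exactly as in the proof of Lemma \ref{tec11}; the borderline $Su_i$ term requires its own, though easy, treatment.
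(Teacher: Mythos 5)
Your proposal is correct and follows essentially the same route as the paper: integrate the equation by parts over $(B_1)_+$, control the annulus $r_i\leq|x-x_i|\leq 1$ via the decay estimate of Lemma \ref{tec5}, rescale to the bubble, and identify the limit with the flux of $b_{\beta_0}$ through large hemispheres via \eqref{lp18}. The only (harmless) difference is organizational — you pass through the full-space integrals $\int_{\R^n_+}K(0)b_{\beta_0}^{2^*}$ and $\int_{\partial\R^n_+}H(0)b_{\beta_0}^{2^\sharp}$ and then cancel the boundary contributions, making the value of the constant $C(n,\beta)$ explicit where the paper merely asserts it.
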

\begin{proof}
Integrating \eqref{tfgc1} by parts, we get the relation
\begin{align}
&u_i(x_i)\int_{\de^+(B_1)_+}\frac{\de u_i}{\de\eta} = u_i(x_i)\frac{n-2}{4(n-1)}\int_{(B_1)_+}Su_i\nonumber\\&+ u_i(x_i)\left(\frac{n-2}{4(n-1)}\int_{(B_1)_+}\abs{K_i}{u_i}^{p_i}-\frac{n-2}{2}\int_{\de_0(B_1)_+}H_iu_i^{\frac{p_i+1}{2}}\right).\label{tec16}
\end{align}
Using Lemma \ref{tec5}, we can bound the previous integrals for $r_i\leq \abs{x-x_i}\leq 1$, as in \cite{djadlimalchiodiahmedou}:
\begin{align*}
\int_{(B_1)_+\backslash (B_{r_i})_+}{u_i}^{p_i} &\leq C \int_{(B_1)_+\backslash (B_{r_i})_+}u_i(x_i)^{-\lambda_ip_i}\abs{x-x_i}^{p_i(2-n+\e_i)}dx \\ &\leq C u_i(x_i)^{-1+o_i(1)}{\oR_i}^{-2+o_i(1)} = o_i(1)u_i(x_i)^{-1+o_i(1)},
\end{align*}
and, analogously
\begin{align*}
\int_{(B_1)_+\backslash (B_{r_i})_+}u_i &= O\left({\oR_i}^{2+o_i(1)}\right)u_i(x_i)^{-1+o_i(1)} \\ \int_{\de_0(B_1)_+\backslash \de_0(B_{r_i})_+}u_i^{\frac{p_i+1}{2}} &= o_i(1)u_i(x_i)^{-1+o_i(1)}.
\end{align*}
Then, \eqref{tec16} becomes:
\begin{align*}
&u_i(x_i)\int_{\de^+(B_{r_i})_+}\frac{\de u_i}{\de\eta} = u_i(x_i)\frac{n-2}{4(n-1)}\int_{(B_{r_i})_+}Su_i\nonumber\\&+ u_i(x_i)\left(\frac{n-2}{4(n-1)}\int_{(B_{r_i})_+}\abs{K_i}{u_i}^{p_i}-\frac{n-2}{2}\int_{\de_0(B_{r_i})_+}H_iu_i^{\frac{p_i+1}{2}}\right) \\ &+u_i(x_i)^{\tau_i}\left(o_i(1)+u_i(x_i)^{-\frac{4}{n-2}}{\oR_i}^{2+o_i(1)}\right).
\end{align*}
By Lemmas \ref{l:maxloc} and \ref{tec11} and a rescaling argument, we obtain:
\begin{align*}
u_i(x_i)\int_{\partial^+(B_1)_+}\frac{\de u_i}{\de\eta}&=\lim_{R\to+\infty}\int_{\de^+(B_R)_+}\frac{\de b_\beta}{\de\eta}+o_i(1) \\&=-\tilde{C}(\beta)\omega_{n-1}\frac{n-2}{2}+o_i(1) < 0,
\end{align*}
with $\tilde{C}(\beta)>0$.
\end{proof}
\begin{proof}[Proof of Proposition \ref{tec2}]
Inequality \eqref{tec4} for $0\leq \abs{x-x_i}\leq r_i$ is a consequence of Lemmas \ref{l:maxloc}, \ref{tec5} and \ref{tec11}, so let us address the inequality only for $r_i<\abs{x-x_i}\leq 1.$

\

Let $\overline{u_i}$ be the radial average of $u_i$, and consider $\overline{u_i}(1)$. By \eqref{tec7},
\begin{equation*}
u_i(x)=O\left({\oR_i}^{-\frac{2}{p_i-1}+o_i(1)}\right)\abs{x-x_i}^{-\frac{2}{p_i-1}}, \hsp\mbox{for}\hsp r_i\leq\abs{x-x_i}\leq 1,
\end{equation*}
then $\overline{u_i}(1)\leq C{\oR_i}^{-\frac{2}{p_i-1}}\to 0$, as $i\to+\infty.$ Define the sequence $\xi_i=\overline{u_i}(1)^{-1}u_i$. It is easy to see that $\xi_i$ satisfies
\begin{align*}
\left\lbrace\begin{array}{ll}-\Delta\xi_i = \frac{n-2}{4(n-1)}\overline{u_i}(1)^{p_i-1}K_i{\xi_i}^{p_i}-\frac{n-2}{4(n-1)}S\xi_i & \mbox{in}\hsp (B_2)_+\backslash\{0\}, \\ \frac{\de\xi_i}{\de\eta} = \frac{n-2}{2}\overline{u_i}(1)^{\frac{p_i-1}{2}}H_i{\xi_i}^{\frac{p_i+1}{2}}&\mbox{on}\hsp\partial_0(B_2)_+\backslash\{0\}.\end{array}\right.
\end{align*}
Harnack's Inequality holds in the annulus $(B_2)_+\backslash\{0\}$ and we can pass to the limit to find a function $h$ such that $\xi_i\to h$ in $C^2_{loc}\left((B_2)_+\backslash\{0\}\right)$. This function verifies
\begin{align}\label{tec17}
\left\lbrace\begin{array}{ll}-\Delta h = 0 & \mbox{in}\hsp (B_2)_+\backslash\{0\}, \\ \frac{\de h}{\de\eta} = 0&\mbox{on}\hsp\partial_0(B_2)_+\backslash\{0\}.\end{array}\right.
\end{align}
Since the origin is an isolated simple blow-up point, $h$ must be singular at $0$. Equation \eqref{tec17} allows us to consider the symmetric extension of $h$ to $B_2\backslash\{0\}$, given by
\begin{equation*}
\tilde{h}(x)=\left\lbrace\begin{array}{ll}h(x)&\mbox{if}\hsp x_n\geq0, \\ h(x^1,\ldots,x^{n-1},-x^n)&\mbox{otherwise},\end{array}\right.
\end{equation*} 
which is positive and harmonic in $B_2$. By uniqueness of the harmonic extension, it must coincide with the one given by  Schwartz's Reflection Principle, so we can write
\begin{equation*}
h(x)=a\abs{x}^{2-n}+b(x),
\end{equation*}
where $a$ is a positive constant and $b$ satisfies 
\begin{align*}
\left\lbrace\begin{array}{ll}-\Delta b = 0 & \mbox{in}\hsp (B_1)_+\backslash\{0\}, \\ \frac{\de b}{\de\eta} = 0&\mbox{on}\hsp\partial_0(B_1)_+\backslash\{0\}. \end{array}\right.
\end{align*}
Let us prove \eqref{tec2} for $\abs{x-x_i}=1$, namely, $\overline{u_i}(1)\leq Cu_i(x_i)^{-1}$. By the harmonicity of $b$,
\begin{equation*}
0 = \int_{(B_1)_+}\Delta b = \int_{\partial^+(B_1)_+}\frac{\partial b}{\de \eta},
\end{equation*}
from which we deduce that
\begin{equation}\label{tec18} \lim_{i\to+\infty}\overline{u_i}(1)^{-1}\int_{\de^+(B_1)_+}\frac{\de u_i}{\de \eta}=\int_{\de^+(B_1)_+}\frac{\de h}{\de \eta}=a\int_{\de^+(B_1)_+}\frac{\de \abs{x}^{2-n}}{\de \eta}<0.
\end{equation}
The result follows from Lemma \ref{tec15}. For the general case $r_i<\abs{x-x_i}<1$ we use a rescaling argument to reduce ourselves to the case $\abs{x-x_i}=1$, as in \cite{li95}. Assume by contradiction that there exists $\tilde{x}_i$, $r_i\leq \abs{\tilde{x}_i-x_i}\leq 1$, such that
\begin{equation}\label{tec19} \lim_{i\to+\infty}u_i(\tilde{x}_i)u_i(x_i)\abs{\tilde{x}_i-x_i}^{n-2}=+\infty.
\end{equation}
Set $\tilde{r}_i = \abs{\tilde{x}_i-x_i}$, and $\tilde{u}_i={\tilde{r}_i}^{\frac{2}{p_i-1}}u_i(\tilde{r}_ix+x_i).$ As in Section \ref{s:blowup}, one can prove that $\tilde{u}_i$ solves the boundary value problem
\begin{align*}
\left\lbrace\begin{array}{ll} \frac{4(n-1)}{n-2}\Delta \tilde{u}_i(x) = K_i(\tilde{r}_ix+x_i)\tilde{u}_i(x)^{p_i}-{\tilde{r}_i}^2S(\tilde{r}_ix+x_i)\tilde{u}_i(x)&\mbox{in}\hsp (B_{\tilde{r}_i^{-1}})_+, \\ \frac{2}{n-2}\frac{\de \tilde{u}_i(x)}{\de \eta}=H_i(\tilde{r}_ix+x_i)\tilde{u}_i(x)^{\frac{p_i+1}{2}}&\mbox{on}\hsp \partial_0(B_{\tilde{r_i}^{-1}})_+.\end{array} \right.
\end{align*}
We claim that $0$ is  isolated simple  for $\tilde{u}_i.$ First, observe that
\begin{equation*}
\tilde{u}_i(0)={\tilde{r}_i}^{\frac{2}{p_i-1}}u_i(x_i)\geq {r_i}^{\frac{2}{p_i-1}}u_i(x_i) = {\oR_i}^{\frac{2}{p_i-1}}\to +\infty.
\end{equation*}
Moreover, rescaling \eqref{tec20}, we get:
\begin{equation*}
\tilde{u}_i(x)={\tilde{r}_i}^{\frac{2}{p_i-1}}u_i(\tilde{r}_ix+x_i)\leq C_1\abs{x}^{-\frac{2}{p_i-1}},\hsp \mbox{for}\hsp \abs{x}\leq\frac{2}{\tilde{r}_i}.
\end{equation*}
Finally, it is easy to check that the weighed radial averages of $\tilde{u}_i$ and $u_i$ verify the relation
\begin{align}
\widehat{(\tilde{u}_i)}(r)=r^{\frac{2}{p_i-1}}\overline{\tilde{u}_i}(r)&=r^{\frac{2}{p_i-1}}\frac{2}{\omega_{n-1}r^{n-1}}\int_{\partial^+(B_r)_+}{\tilde{r}_i}^{\frac{2}{p_i-1}}u_i(\tilde{r}_ix+x_i)dx\nonumber\\&=(r\tilde{r}_i)^{\frac{2}{p_i-1}}\fint_{\partial^+\left(B(x_i,r\tilde{r}_i)\right)_+}u_i(y)dy=\hat{u_i}(r\tilde{r}_i),\label{tec2000}
\end{align}
from which it follows that $\widehat{(\tilde{u}_i)}$ has a unique critical point in the interval $(0,\tilde{r}_i^{-1})$, concluding the proof of the claim. Therefore, the hypotheses of Proposition \ref{tec2} hold for $\tilde{u}_i$, and in particular \eqref{tec4} in the unit sphere. This gives
\begin{equation*}
\tilde{u}_i(0)\tilde{u}_i\left(\frac{\tilde{x}_i-x_i}{\tilde{r}_i}\right)={\tilde{r}_i}^{\frac{4}{p_i-1}}u_i(\tilde{x}_i)u_i(x_i)\leq C, \hsp\mbox{for all}\hsp i.
\end{equation*} 
Hence, 
\begin{equation*}
u_i(\tilde{x}_i)u_i(x_i)\abs{\tilde{x}_i-x_i}^{n-2} \leq C{\tilde{r}_i}^{\tau_i},
\end{equation*}
contradicting \eqref{tec19}.

Concerning the final statement of the proposition, \eqref{eq:1+o1} clearly holds in a 
ball of size $R_i u_i(x_i)^{-\frac{p_i-1}{2}}$, by Lemma \ref{l:maxloc}. Notice then that 
by \eqref{tec4} and Lemma \ref{tec11}, the measures $K_i u_i(x_i) u_i^{p_i}$ and 
$H_i u_i^{\frac{p_i+1}{2}}$ converge to Dirac masses at the origin on the closure 
of $(B_1)_+$ and on $\partial_0 (B_1)_+$ respectively. This implies that 
$u_i(x_i) u_i(x) \to a |x|^{2-n} + b$, with $a$ and $b$ as in the statement, 
also uniformly  on $(B_1(x_i))_+ \setminus (B_{R_i u_i(x_i)^{-\frac{p_i-1}{2}}})_+$, 
which proves the first estimate in \eqref{eq:1+o1}. The gradient estimate 
follows by standard regularity results. 
\end{proof}

\section{Blow-up points in $\mathscr{S}_1$ are isolated and simple for $n=3$}\setcounter{equation}{0} \label{s:n=3}

In this section we prove the following result:

\begin{theorem}\label{3d15} Suppose that $n=3$. Then $\mathscr{S}_1$ consists of isolated simple points of blow-up. In particular, in view of Proposition \ref{tec2}, $\mathscr{S}_1$ is finite.
\end{theorem}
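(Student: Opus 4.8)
## Proof strategy for Theorem \ref{3d15}

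The plan is to establish two facts in turn: first, that every $p\in\cS_1$ is an isolated blow-up point in the sense of Definition \ref{tec01}; second, that every isolated blow-up point in $\cS_1$ is simple in the sense of Definition \ref{tec03}. Once both are proved, the finiteness of $\cS_1$ follows immediately from Proposition \ref{tec2}, which shows that around each such point $u_i$ is, up to scaling, a single bubble $b_{\beta_i}$ with $\beta_i\to+\infty$; hence two distinct points of $\cS_1$ carry mutually comparable, concentrated energy and cannot accumulate.

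\emph{Isolatedness.} I would argue by contradiction through a selection procedure. By Lemma \ref{l:maxloc}, near a point $p\in\cS_1$ there is a sequence of local maxima $x_i\to p$ about which the rescaled functions converge in $C^2$ to $b_{\beta_0}$ on the balls $B(x_i,\oR_i)$ of the rescaled variable, $\oR_i\to+\infty$. If \eqref{tec02} failed for every choice of $C$ and $R$, one could select, for each $i$, a point $y_i$ in a fixed neighbourhood of $p$, $y_i\ne x_i$, maximising $u_i(y)\,\dist(y,x_i)^{\frac{2}{p_i-1}}$ and with this quantity unbounded; rescaling \eqref{tfgc1} around $y_i$ at the natural scale $u_i(y_i)^{-\frac{p_i-1}{2}}$, and exploiting the bound supplied by the maximality of $y_i$ together with the Harnack inequality of Proposition \ref{tec05}, one would obtain in the limit a nontrivial solution of the limit problem \eqref{lp1} on $\R^n_+$ (or on $\Rn$) carrying a second concentration point at finite rescaled distance. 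This is excluded by the classification in Proposition \ref{lp2}, since on the boundary only the single-peak profiles $b_\beta$ occur and they are regular on $\overline{\R^n_+}$; here the hypothesis $K<0$ is used, exactly as in Lemma \ref{l:maxloc}, to push all such concentration points onto $\partial M$ by subharmonicity. Iterating the selection terminates after finitely many steps and yields \eqref{tec02}.

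\emph{Simplicity.} Suppose $p\in\cS_1$ is isolated but not simple. Then the weighted average $\hat u_i(r)=r^{\frac{2}{p_i-1}}\overline{u_i}(r)$ has a second critical point $\sigma_i\to 0$, lying far beyond the bubble scale $r_i=\oR_i\,u_i(x_i)^{-\frac{p_i-1}{2}}$. Rescaling $u_i$ about $x_i$ at scale $\sigma_i$ and normalising by $\overline{u_i}(\sigma_i)$, the decay estimate \eqref{tec4} forces the nonlinear coefficient of the rescaled equation to vanish in the limit, so the rescaled functions converge in $C^2_{loc}(\overline{\R^n_+}\setminus\{0\})$ to a \emph{positive} function $w$ solving the \emph{linear} Neumann problem $\Delta w=0$ in $\R^n_+$, $\frac{\partial w}{\partial\eta}=0$ on $\partial\R^n_+$, singular at the origin; thus $w(x)=a|x|^{2-n}+b(x)$ with $a>0$ and $b$ harmonic with $\frac{\partial b}{\partial\eta}=0$, whence, extending $b$ evenly, its spherical average equals the constant $b(0)$. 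The condition that $\hat w$ inherit a critical point at $r=1$ then reads $b(0)=a>0$. On the other hand, by Corollary \ref{pi4} applied to the genuine solutions $u_i$ on a small fixed half-ball $B(x_i,\delta)_+$ and passed to the limit — using Lemma \ref{tec11}, the decay bounds of Lemma \ref{tec5}, and Proposition \ref{pri11} with $h=u_i(x_i)u_i$ — the flux term $\int_{\partial^+\Omega}B(u_i,\nabla u_i)$ converges, after the appropriate normalisation by $u_i(x_i)^2$, to $-(n-1)(n-2)\omega_{n-1}\,a\,b(0)$, while \emph{all} remaining bulk and boundary contributions involving $K$, $S$ and $H$ are $o_i(1)$ precisely because $n=3$. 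This forces $a\,b(0)\leq 0$, contradicting $b(0)=a>0$, and hence the blow-up is simple.

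The main obstacle is the last step of \emph{Simplicity}: controlling uniformly, as $i\to\infty$, all the error terms in the Pohozaev identity of Corollary \ref{pi4}, and this is exactly where the restriction to $n=3$ is indispensable. The exponents appearing in Lemmas \ref{tec11} and \ref{tec5} — notably $\tau_i=O\big(u_i(x_i)^{-\frac{2}{n-2}+o_i(1)}\big)$ and $u_i(x_i)^{\lambda_i}u_i(x)\leq C|x-x_i|^{2-n+\e_i}$ — make the volume integrals of $|K_i|u_i^{p_i+1}$ and $|S|u_i^2$, the surface integrals of $|H_i|u_i^{\frac{p_i+3}{2}}$, and their $X\cdot\nabla(\cdot)$ counterparts all negligible in the limit only in low dimension; for $n\geq 4$ these would give finite or divergent contributions and a subtler local analysis (involving the Weyl tensor, which vanishes identically when $n=3$) would be needed. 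One must also check that the function obtained by rescaling at scale $\sigma_i$ still has an isolated blow-up at the origin, so that \eqref{tec4} may legitimately be invoked for it; this is verified through the scaling identity for the weighted radial averages, just as in the last paragraph of the proof of Proposition \ref{tec2}.
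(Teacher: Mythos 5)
Your \emph{Simplicity} step reproduces the paper's Proposition \ref{3d7} faithfully: rescaling at the scale of the second critical point of $\hat u_i$, obtaining the limit $a|x|^{2-n}+b$ with $b$ constant (Liouville after even reflection) and $b=a$ from the critical-point condition at $r=1$, and then reaching a sign contradiction in the Pohozaev identity of Corollary \ref{pi4} because for $n=3$ the boundary flux term of Proposition \ref{pri11}, of size $a^2\lambda_i^{2-n}=a^2\lambda_i^{-1}$, dominates all remaining contributions. That part is sound, and your explanation of where $n=3$ enters is correct.

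The gap is in your \emph{Isolatedness} step. The classification in Proposition \ref{lp2} only excludes a second concentration point at \emph{finite rescaled distance} from the selected point (two peaks surviving in the same limit profile, which the single-bubble structure of $b_\beta$ forbids). It says nothing about a cluster of bubbles whose mutual distances tend to zero while remaining much larger than the concentration scales $u_i(q_j^i)^{-\frac{p_i-1}{2}}$: in that regime every rescaled limit is a single clean bubble and your argument detects no contradiction. Your assertion that "iterating the selection terminates after finitely many steps" is precisely the statement to be proved; the standard selection (the paper's Proposition \ref{3d1}) produces $m_i$ points satisfying \eqref{3d2}, but $m_i$ may grow with $i$ and the radii $r_j^i$ may collapse --- the paper explicitly flags this as the difference between \eqref{3d3} and \eqref{tec02}. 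The paper closes this gap with a second, genuinely different Pohozaev argument: set $\sigma_i=\dist(q_1^i,q_2^i)$ equal to the minimal mutual distance, rescale at scale $\sigma_i$, show that $0$ and $y_2$ (with $|y_2|=1$) are both isolated blow-up points for the rescaled sequence (hence isolated simple by Proposition \ref{3d7}), apply Proposition \ref{tec2} to obtain the limit $a_1|y|^{2-n}+a_2|y-y_2|^{2-n}+b(y)$, and derive a contradiction from Proposition \ref{pri11} because the \emph{second} bubble contributes a strictly positive regular part $f(0)=a_2|y_2|^{2-n}+b(0)>0$ at the origin. This two-bubble interaction term is what actually rules out accumulation, and it is absent from your proposal.
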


In order to prove Theorem \ref{3d15}, some preliminary results are needed. The following proposition can be proved as in \cite{HanLi}, with minor modifications.
\begin{proposition}\label{3d1}Let $(u_i)$ be a blowing-up sequence of solutions of \eqref{tfgc1}. Given a large $\oR>0$ and a small $\e>0$, for large enough $i$ there exists a constant $C=C(\oR,\e)>0$ and a finite set of points $\left\lbrace q_1^i,\ldots,q_{m_i}^i\right\rbrace$, with $m_i\geq 1$, such that each $(q_j^i)$ is a local maximum for $u_i$ and satisfies:
\begin{enumerate}
	\item $\left\lbrace B\left(q_j^i,r_j^i\right)_+:\hsp j=1,\ldots,m_i\right\rbrace$ is a disjoint collection for $r_j^i= \oR u_i(q_j^i)^{-\frac{p_i-1}{2}}$,
	\item If $y = (y_1,\ldots,y_n)$ are geodesic coordinates centered at $q_j^i$, then
	\begin{equation}\label{3d10}
	\norm{u_i(q_j^i)^{-1}u_i\left(u_i(q_j^i)^{-\frac{p_i-1}{2}}y\right)-b_{\beta_j}(y)}_{C^2(B_{\oR})}<\e, 
	\end{equation}
	with $b_{\beta_j}$ being a solution of \eqref{lp1} in $\R^n_+$ of the form \eqref{lp4}.
	\item \begin{equation}\label{3d2}
	u_i(x)\leq \frac{C}{\dist(x,\left\lbrace q_1^i,\ldots,q_N^i\right\rbrace)^{\frac{2}{p_i-1}}} \hsp \mbox{ for every } x\in M.
	\end{equation}
	Moreover, $u_i(q_j^i)\dist(q_j^i,q_k^i)\geq C^{-1}$ for every $j\neq k$.
\end{enumerate}
\end{proposition}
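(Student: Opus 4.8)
The plan is to run the classical \emph{bubble-selection} scheme of Schoen \cite{schoen}, in the form carried out for boundary problems by Han--Li in \cite{HanLi}, adjusting for the sign condition $K<0$, the subcritical exponents $p_i$, and the boundary limit problem \eqref{lp1}.

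First I would extract the first concentration point. Since $(u_i)$ blows up, $M_i:=\max_{\overline M}u_i\to+\infty$; let $q_1^i$ realise this maximum. Because $K<0$, at any interior point where $u_i$ is large one has $-\Delta_g u_i=C_n^{-1}(Ku_i^{p_i}-Su_i)<0$, so $q_1^i\in\partial M$ for $i$ large. Rescaling by $u_i(q_1^i)^{-(p_i-1)/2}$ as in Section \ref{s:blowup}, using the local elliptic estimates there together with Proposition \ref{lp2} and Lemma \ref{l:maxloc}, the rescaled functions converge in $C^2_{loc}$ to a solution of \eqref{lp1} equal to $1$ with a local maximum at the origin; in the regime where the statement is applied (a neighbourhood of a point of $\cS_1$, where $\D_n>1$) this limit is a bubble $b_{\beta_1}$ of the form \eqref{lp4}, so given $\oR$ and $\e$ the estimate \eqref{3d10} holds at $q_1^i$ for large $i$.

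Then I would iterate. Assume $q_1^i,\dots,q_k^i$ are local maxima of $u_i$ with the balls $B(q_j^i,r_j^i)$, $r_j^i=\oR\,u_i(q_j^i)^{-(p_i-1)/2}$, pairwise disjoint and \eqref{3d10} holding at each, and set $\Theta_k^i=\sup_{x\in\overline M}u_i(x)\,\dist(x,\{q_1^i,\dots,q_k^i\})^{2/(p_i-1)}$. If $\Theta_k^i\le C_1$ for a constant $C_1=C_1(\oR,\e,n)$ fixed in advance (large enough that a bubble with parameter in the range produced above has weighted height below $C_1$ outside its $\oR$-ball), I stop and set $m_i=k$ --- this is exactly \eqref{3d2}. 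Otherwise I pick $\tilde q_{k+1}^i$ almost realising $\Theta_k^i$, show (using \eqref{3d10} at the previous points and the decay of a bubble) that $\dist(\tilde q_{k+1}^i,\{q_1^i,\dots,q_k^i\})\gg u_i(\tilde q_{k+1}^i)^{-(p_i-1)/2}$, rescale around $\tilde q_{k+1}^i$ so that the previous bubbles escape to infinity, obtain convergence to a single bubble, slide $\tilde q_{k+1}^i$ slightly to a genuine local maximum $q_{k+1}^i$ (a bubble has a strict maximum), and read off disjointness of the $\oR$-balls together with \eqref{3d10} at $q_{k+1}^i$. For each fixed $i$ the process stops after finitely many steps, since each disjoint ball $B(q_j^i,r_j^i)$ carries a fixed amount of gradient energy, $\int_{B(q_j^i,r_j^i)}|\nabla u_i|^2\ge c_0>0$ with $c_0$ independent of $j$, while $\int_M|\nabla u_i|^2<\infty$ for the fixed smooth function $u_i$; hence $m_i<\infty$ (though $m_i$ may well diverge with $i$). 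The final bound $u_i(q_j^i)\dist(q_j^i,q_k^i)\ge C^{-1}$ then follows by combining $\dist(q_j^i,q_k^i)\ge\oR\,u_i(q_j^i)^{-(p_i-1)/2}$ with $\dist(q_j^i,q_k^i)\le\operatorname{diam}(M)$ and $p_i\nearrow\frac{n+2}{n-2}$.

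The main work, and where the ``minor modifications'' relative to \cite{HanLi} sit, is the decoupling inside the iteration: one must show that each newly chosen $\tilde q_{k+1}^i$ is well separated from all the earlier points \emph{relative to the scale of its own bubble}, so that after rescaling around it the previously produced bubbles drift off to infinity and a single limiting profile of the form \eqref{lp4} is recovered. This is precisely what forces all the $\oR$-balls to be disjoint and makes \eqref{3d10} hold simultaneously at every $q_j^i$, and it requires the bubble-interaction estimates of \cite{HanLi} to be kept uniform in $i$ while $p_i\nearrow\frac{n+2}{n-2}$. Two features of the present setting help rather than hinder: $K<0$ confines all concentration to $\partial M$ by subharmonicity, and --- working in a neighbourhood of a point of $\cS_1$, where $\D_n>1$ --- the classification of Proposition \ref{lp2} leaves only bubble profiles, so the horospherical solutions $v_\alpha$ (which possess no strict local maximum) never appear at the selected points.
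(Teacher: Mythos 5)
Your overall strategy is exactly the one the paper intends: the paper's ``proof'' of Proposition \ref{3d1} is a one-line delegation to \cite{HanLi}, and what you write out is precisely the Schoen/Han--Li bubble-selection scheme (first maximum forced to $\partial M$ by subharmonicity where $u_i$ is large, iterative selection via the weighted quantity $\Theta_k^i$, termination for fixed $i$ by energy quantization on disjoint balls). Your remark that the selected profiles can only be of the form \eqref{lp4} in a region where $\D_n>1$ is a genuine subtlety that the paper glosses over; as literally stated the proposition is global, and near $\cS_0$ the limit profiles $v_\alpha$ do attain (non-strict) maxima on $\partial\R^n_+$, so your parenthetical ``no strict local maximum'' does not by itself exclude them --- but restricting to a neighbourhood of $\cS_1$, as you do and as the application in Theorem \ref{3d15} requires, is the honest reading.

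There is, however, one concrete slip: your derivation of the last estimate in item (3). From disjointness you get $\dist(q_j^i,q_k^i)\ge \oR\,u_i(q_j^i)^{-\frac{p_i-1}{2}}$, hence $u_i(q_j^i)\dist(q_j^i,q_k^i)\ge \oR\,u_i(q_j^i)^{1-\frac{p_i-1}{2}}$. For $n=3$ one has $\frac{p_i-1}{2}\to 2$, so the exponent tends to $-1$ and the right-hand side tends to $0$ as $u_i(q_j^i)\to+\infty$; adding $\dist\le\operatorname{diam}(M)$ does not repair this. So your argument proves nothing in the only dimension the paper uses. What disjointness does give, in every dimension, is the standard weighted separation
\begin{equation*}
u_i(q_j^i)\,\dist(q_j^i,q_k^i)^{\frac{2}{p_i-1}}\;\ge\;\oR^{\frac{2}{p_i-1}},
\end{equation*}
which is the form appearing in \cite{HanLi} and which suffices for the only use made of this estimate in the proof of Theorem \ref{3d15} (namely $\sigma_i\to 0$ forces $u_i(q_j^i)\to+\infty$). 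You should either prove the weighted form and note that the unweighted one in the statement is presumably a misprint, or supply a separate argument for the unweighted bound when $n=3$; as written, that step is a gap.
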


We highlight what condition (3) locally means; if $y=(y_1,\ldots,y_n)$ is a normal coordinate system centered at $q_j^i$, then
\begin{equation}\label{3d3}
u_i(y) \leq \frac{C}{\dist(y,q_j^i)^{\frac{2}{p_i-1}}},\hsp\mbox{for}\hsp y\in B(q_j^i,r_j^i)_+. 
\end{equation}
\begin{remark} The essential difference between \eqref{3d3} and \eqref{tec02} is that, in the former, the radius depends on $i$ and could colapse.
\end{remark}
In the sequel, we follow the steps in \cite{khurimarquesshoen}, using the assumption $n=3$ as in \cite{djadlimalchiodiahmedou}. We start proving that every isolated blow-up point in $\mathscr{S}_1$ is, in fact, isolated simple.
\begin{proposition}\label{3d7} Let $n=3$. If $q\in \mathscr{S}_1$ is an isolated point of blow-up, then it is isolated simple.
\end{proposition}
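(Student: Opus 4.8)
The plan is to argue by contradiction, rescaling at the \emph{second} critical point of the weighted spherical average and reducing to the isolated--simple picture of Proposition \ref{tec2}.

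Suppose $q=0$ is isolated but not isolated simple, with local maxima $(x_i)\to 0$ as in Lemma \ref{l:maxloc}. Since $\hat u_i(r)=r^{\frac2{p_i-1}}\overline u_i(r)\to 0$ as $r\to 0$ for each fixed $i$, its first critical point $r_i^{(1)}$ is a local maximum, lying at the bubble scale $r_i^{(1)}\simeq c\,u_i(x_i)^{-\frac{p_i-1}2}$ (on scales $\lesssim \oR_i u_i(x_i)^{-\frac{p_i-1}2}$ one has $\hat u_i\simeq \widehat{b_{\beta_0}}$, which has a unique critical point). Non--simpleness means that for every $\rho>0$, $\hat u_i$ has at least two critical points in $(0,\rho)$ for large $i$; letting $\mu_i$ be the second critical point, a diagonal choice forces $\mu_i\to 0$, and because there is no critical point inside the bubble region other than $r_i^{(1)}$ we also get $\mu_i/r_i^{(1)}\to+\infty$. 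In geodesic normal coordinates centred at $x_i$ set $w_i(y)=\mu_i^{\frac2{p_i-1}}u_i(x_i+\mu_i y)$ on $(B_{\delta/\mu_i})_+$. Then $w_i$ solves a problem of type \eqref{tfgc1} with $K_i,H_i,S$ replaced by $K_i(x_i+\mu_i\cdot)$, $H_i(x_i+\mu_i\cdot)$, $\mu_i^2 S(x_i+\mu_i\cdot)$, so the curvature gradients carry an extra factor $\mu_i$ and the scalar--curvature term a factor $\mu_i^2$; moreover $0$ is a local maximum of $w_i$, $w_i(0)=\mu_i^{\frac2{p_i-1}}u_i(x_i)\to+\infty$ (since $\mu_i\gg u_i(x_i)^{-\frac{p_i-1}2}$), the bound \eqref{tec02} rescales to $w_i(y)\le C|y|^{-\frac2{p_i-1}}$ (so $0$ is isolated for $w_i$), and $\hat w_i(r)=\hat u_i(\mu_i r)$ gives $\hat w_i'(1)=\mu_i\hat u_i'(\mu_i)=0$ while $\hat w_i$ has exactly one critical point in $(0,\tfrac12)$ for $i$ large, namely $r_i^{(1)}/\mu_i\to 0$. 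Hence $0$ is an isolated \emph{simple} blow-up point for $w_i$, and $r=1$ is a further critical point of $\hat w_i$ (a local minimum).

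Now apply Proposition \ref{tec2} to $w_i$ (on a ball large enough to contain $r=1$): there are $a>0$ and $b$ solving \eqref{eq:b} with $w_i(0)w_i\to h:=a|y|^{2-n}+b$ in $C^2_{loc}(\overline{(B_1)_+}\setminus\{0\})$; reflecting $b$ evenly across $\partial_0$ makes it harmonic, so its half-sphere averages satisfy $\overline b(r)\equiv b(0)$. Put $G_i(r)=w_i(0)\hat w_i(r)=r^{\frac2{p_i-1}}\overline{(w_i(0)w_i)}(r)$; since $\frac2{p_i-1}\to\frac{n-2}2$ and the convergence above is $C^2$ away from $0$, $G_i\to G$ in $C^1$ near $r=1$ with $G(r)=a\,r^{\frac{2-n}2}+b(0)\,r^{\frac{n-2}2}$. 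From $G_i'(1)=w_i(0)\hat w_i'(1)=0$ we get $G'(1)=\frac{n-2}2\,(b(0)-a)=0$, i.e. $b(0)=a>0$.

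To reach a contradiction, apply Corollary \ref{pi4} to $w_i$ on $B_\sigma$, $0<\sigma<1$ fixed, multiply by $w_i(0)^2$, and let $i\to\infty$ and then $\sigma\to0$. One side is $w_i(0)^2\int_{\partial^+(B_\sigma)_+}B(w_i,\nabla w_i)=\int_{\partial^+(B_\sigma)_+}B(w_i(0)w_i,\nabla(w_i(0)w_i))\to\int_{\partial^+(B_\sigma)_+}B(h,\nabla h)$, which by Proposition \ref{pri11} tends to $-(n-1)(n-2)\omega_{n-1}a\,b(0)=-(n-1)(n-2)\omega_{n-1}a^2<0$. Every other term — the bulk integrals weighted by $X\cdot\nabla$ of the rescaled curvatures (extra factor $\mu_i$), the scalar--curvature terms (factor $\mu_i^2$), the terms $\big(\tfrac n{p_i+1}-\tfrac{n-2}2\big)\int K_i w_i^{p_i+1}$ and $2(n-1)\big(\tfrac{n-2}2-\tfrac{n-1}{(p_i+3)/2}\big)\int H_i w_i^{(p_i+3)/2}$, whose coefficients are $O(\tau_i)$ with $\tau_i=O(w_i(0)^{-2/(n-2)+o_i(1)})$ by Lemma \ref{tec11}, and the equatorial boundary terms — must be shown to vanish after the same normalisation, using the decay \eqref{tec4}, the bubble profile near $0$ (Lemma \ref{l:maxloc} and \eqref{lp18}) and Lemma \ref{tec11}. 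This is exactly where $n=3$ enters: the scaling exponents of $w_i(0)$ produced by these estimates are nonpositive precisely when $n=3$ (they become positive for $n\ge4$). Granting this, $0=-(n-1)(n-2)\omega_{n-1}a^2$, contradicting $a>0$; hence $q$ is isolated simple. The main obstacle is precisely this last bookkeeping of scaling exponents in the Pohozaev identity for $w_i$, which is where the restriction to dimension three is used.
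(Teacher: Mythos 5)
Your argument is essentially the paper's own proof: contradiction, rescaling by the second critical point $\mu_i$ (the paper's $t_i$) of the weighted average, verifying that the origin becomes an isolated \emph{simple} blow-up for $w_i$ while $\hat w_i'(1)=0$, invoking Proposition \ref{tec2} and Proposition \ref{pri11}, and closing with the Pohozaev identity of Corollary \ref{pi4}. The one genuine (and pleasant) variation is how you get $a=b(0)$: you compute the half-sphere averages of $h$ directly via the mean-value property of the even reflection of $b$, whereas the paper first shows $b$ is \emph{constant} by extending the convergence to all of $\R^n_+\setminus\{0\}$, reflecting, and applying Liouville; your local argument avoids Liouville and needs only the regularity of $b$ at the origin guaranteed by \eqref{eq:b}, so it is a legitimate shortcut.

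The caveat is that the decisive step — checking that every non-singular term in \eqref{pi9}, after multiplication by $w_i(0)^2$, vanishes in the double limit $i\to\infty$, $\sigma\to 0$ — is asserted rather than carried out, and this is where all the work (and the dimensional restriction) lives. The paper does this bookkeeping explicitly: writing $\lambda_i=t_i\delta_i^{-1}\geq \oR_i\to\infty$, the curvature-gradient and $\tau_i$-weighted terms are $O(\lambda_i^{-1+O(\tau_i)})o_i(1)$, the scalar-curvature and outer-boundary terms are $O(\lambda_i^{-2+O(\tau_i)})$ or better, while the singular term is $-(n-1)(n-2)\omega_{n-1}a^2\lambda_i^{2-n+O(\tau_i)}$ (see \eqref{3d14}--\eqref{3d21}, using $\lambda_i^{O(\tau_i)}=1+o_i(1)$ from Lemma \ref{tec11} and \eqref{3d20}). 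The contradiction requires $\lambda_i^{2-n}$ to dominate $\lambda_i^{-1}o_i(1)+\lambda_i^{-2}$, which holds exactly for $n=3$; so your diagnosis of where $n=3$ enters is correct, but to make the proof complete you must reproduce these estimates (using \eqref{tec4}, \eqref{lp18} and Lemma \ref{tec11}) rather than grant them.
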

\begin{proof}
We proceed by contradiction, recalling Definition \ref{tec03}. If $(q_i)\to q\in \mathscr{S}_1$ is not isolated simple, then there are at least two critical points of $\hat{u}_i$ in the interval $(0,\overline{t}_i)$, for some sequence $\overline{t}_i\to 0$. 

We know by Lemma \ref{l:maxloc} that the sequence $u_i$, after rescaling it as in that statement, 
converges to a bubble $b_{\beta}$ on arbitrarily large balls, so rescaling back we see that there is, at most, one critical point in the interval $\left(0,\oR_i u_i(q_i)^{-\frac{p_i-1}{2}}\right)$ for the counterpart of the 
function $\hat{u}_i$ in Definition \ref{tec03}. Therefore the second critical point, that we call $t_i$, must verify
\begin{equation}\label{3d11} 
\frac{\oR_i}{u_i(q_i)^\frac{p_i-1}{2}}\leq t_i \leq \overline{t}_i.
\end{equation}
Now, take  normal coordinates $y=(y_1,\ldots,y_n)$ centered at $q_i$, and rescale $u_i$ in the following way:
\begin{equation*}
w_i(y)=t_i^{\frac{2}{p_i-1}}u_i(t_iy),\hsp\mbox{for}\hsp y\in B\left(0,t_i^{-1}\right)_+.
\end{equation*}
We claim that $w_i$ has an isolated simple blow-up point at the origin 
according to Definition \ref{tec03}, after properly dilating the arguments of the  
functions $K_i$ and $H_i$. 

\

Since $q$ is an isolated blow-up point for $u_i$, \eqref{tec02} holds in a ball of fixed radius $\rho>0$. As this inequality is 
scale-invariant, the same stands for $w_i$ for $\abs{y}<\rho t_i^{-1}$. Moreover, inequality \eqref{3d11} implies that
\begin{equation*}
w_i(0)={t_i}^{\frac{2}{p_i-1}}u_i(0)\geq \oR_i \to +\infty,
\end{equation*}
so $\{0\}$ is an isolated blow-up point for $w_i$. The next step is to show that the weighted radial averages $\hat{w}_i$ have a unique critical point in the interval $(0,1)$. This follows from \eqref{tec2000} together with the fact that, by definition of $t_i$, $u_i$ has a unique critical point in $(0,t_i)$, proving the above claim. 

We highlight that this also implies, via the chain rule, that
\begin{equation}\label{3d12}
\left.\frac{d}{dr}\right|_{r=1}\hat{w}_i(r) = t_i \hat{u}_i'(t_i)=0.
\end{equation}
The sequence $w_i$ satisfies the hypotheses of Proposition \ref{tec2}, so after passing to a subsequence we have
\begin{equation*}
w_i(0)w_i(y)\to h(y)=a\abs{y}^{2-n}+b(y), \hsp \mbox{locally in } C^2(\R^n_+\backslash\{0\}), 
\end{equation*}
with $a > 0$ and $b$ as in \eqref{eq:b}. 
We consider the function $b^*:\R^n\to \R$ defined by symmetrization as
\begin{equation}\label{3d17}
b^*(x)=\left\lbrace\begin{array}{ll} b(x_1,\ldots,x_n) & \mbox{ if } x\in \R^n_+ \\ b(x_1,\ldots,x_{n-1},-x_n) & \mbox{ if } x\in \R^n_-
\end{array}\right.,
\end{equation}
and we notice that it is  harmonic, by the fact that $b$ satisfies Neumann boundary conditions. Also, since $h(y)>0$, $\liminf_{\abs{y}\to+\infty}b^*(y)\geq 0$, so $b^*$ (and consequently $b$) is constant by Liouville's theorem, and using \eqref{3d12} we obtain:
\begin{equation*}
0 = \left.\frac{d}{dr}\right|_{r=1}\frac{\hat{h}(r)}{w_i(0)} = \frac{1}{w_i(0)}\left.\frac{d}{dr}\right|_{r=1}\left(\frac{a}{r^{\frac{n-2}{2}}}+br^{\frac{n-2}{2}}\right)=a-b.
\end{equation*}
Then, 
\begin{equation*}
w_i(0)w_i(y)\to a\abs{y}^{2-n}+a.
\end{equation*}
Proposition \ref{pri11} then gives
\begin{equation}\label{3d13}
\lim_{r\to 0}\int_{\partial^+(B_r)_+}B(w_i,\nabla w_i)=-(n-1)(n-2)\omega_{n-1}\frac{a^2}{w_i(0)^2}.
\end{equation}
On the other hand, we can write $w_i$ in terms of the limiting profile. Set $\delta_i = u_i(0)^{-\frac{p_i-1}{2}}$, and recall that $v_i(y)=\delta_i^{\frac{2}{p_i-1}}u_i(\delta_iy)$ satisfies $v_i(y)\to b_{\beta}(y)$ in the $C^2$ sense on balls of radius $\oR_i$. With this in mind, we can define $\lambda_i:=t_i\delta_i^{-1}\geq \oR_i$ and write
\begin{equation*}
w_i(y) =\lambda_i^\frac{2}{p_i-1}\delta_i^\frac{2}{p_i-1}u_i(\lambda_i\delta_iy)\to \lambda_i^\frac{2}{p_i-1}b_{\beta}(\lambda_iy).
\end{equation*}
With this notation, \eqref{3d13} becomes 
\begin{equation}\label{3d14}
\lim_{r\to 0}\int_{\partial^+(B_r)_+}B(w_i,\nabla w_i)=-(n-1)(n-2)\omega_{n-1}\frac{a^2}{\lambda_i^{n-2+O(\tau_i)}}.
\end{equation}
Now, we apply Corollary \ref{pi4} with $u(y)=w_i(y)$, $f(y)=K_i(t_iy)$, $g(y)={t_i}^2S(t_iy)$ and $h(y)=h_i(t_iy)$. We estimate all the  terms involved using the asymptotic behaviour of $b_\beta$ described in \eqref{lp18}, Lemma \ref{tec11} and a rescaling argument:
\begin{align*}
\int_{(B_r)_+}w_i(x)^{p_i+1}\abs{X\cdot \nabla K_i(t_ix)}dx &= O\left(\lambda_i^{-1+O(\tau_i)}\right)o_i(1), \\
\int_{(B_r)_+}{t_i}^2w_i(x)^2\abs{X\cdot \nabla S(t_ix)}dx &= O\left({\lambda_i}^{-3+O(\tau_i)}\right)o_i(1), \\
\tau_i\int_{(B_r)_+}K_i(t_ix)w_i(x)^{p_i+1}dx &= O\left(\lambda_i^{-1+O(\tau_i)}\right)o_i(1),\hsp\mbox{and}\\
\int_{(B_r)_+}{t_i}^2S(t_ix)w_i(x)^2dx &=O\left({\lambda_i}^{-2+O(\tau_i)}\right)o_i(1).
\end{align*}
We remark  that, by Proposition \ref{tec11}
\begin{equation}\label{3d20}
\tau_i = O\left(u_i(0)^{-\frac{2}{n-2}}\right) = O\left({\delta_i}^{-1+o_i(1)}\right)=O\left({\lambda_i}^{-1+o_i(1)}\right)t_i.
\end{equation}
In a similar way, we estimate
\begin{align*}
\int_{\de^+(B_r)_+}{t_i}^2S(t_ix)w_i(x)^2dx &=O\left({\lambda_i}^{-2+O(\tau_i)}\right)o_i(1), \\
\int_{\de^+(B_r)_+}K_i(t_ix)w_i(x)^{p_i+1}dx &= O\left({\lambda_i}^{-3+O(\tau_i)}\right),\\
\tau_i\int_{\de_0(B_r)_+}H_i(t_ix)w_i^{\frac{p_i+3}{2}}(x)dx &= O\left({\lambda_i}^{-n+O(\tau_i)}\right)o_i(1), \\
\int_{\de(\de_0(B_r)_+)}H_i(t_ix)w_i(x)^{\frac{p_i+3}{2}}\abs{X\cdot \nu}dx &= O\left({\lambda_i}^{-n+O(\tau_i)}\right),\hsp \mbox{and}\\
\int_{\de_0(B_r)_+}w_i(x)^{\frac{p_i+3}{2}}\abs{X\cdot\nabla H_i(t_ix)}dx&=O\left({\lambda_i}^{1-n+O(\tau_i)}\right)o_i(1).
\end{align*}
Then, taking $r>0$ small enough, by Corollary \ref{pi4}, \eqref{3d14} and the previous estimates:
\begin{equation}\label{3d21}
O\left({\lambda_i}^{-1+O(\tau_i)}\right)o_i(1) +O\left({\lambda_i}^{-2+O(\tau_i)}\right)= -(n-1)(n-2)\omega_{n-1}a^2{\lambda_i}^{2-n+O(\tau_i)}.
\end{equation}
Note that, by Lemma \ref{tec11} and \eqref{3d20},
\begin{equation*}
\lambda_i^{O(\tau_i)} = (1+o_i(1)){t_i}^{O(\tau_i)} = 1+o_i(1).
\end{equation*}
Therefore, \eqref{3d21} leads to a contradiction when $n=3$.
\end{proof}
We now proceed to rule out bubble accumulations.

\begin{proof}[Proof of Theorem \ref{3d15}]
Our goal is to prove that there exists $C>0$, independent of $i$, such that $\dist(q_j^i,q_k^i)\geq C$ for every $j\neq k$ in $\{1,\ldots,m_i\}$. Assume by contradiction that this is not the case. Then,
\begin{equation*}
\lim_{i\to +\infty} \min_{j\neq k} \dist(q_j^i,q_k^i) = 0.
\end{equation*}
Since the blow-up points $\{q_1^i,\ldots,q_{m_i}^i\}$ as in Proposition \ref{3d1} are finitely-many for every $i$, without loss of generality we can assume that 
\begin{equation*}
\sigma_i :=\min_{j\neq k} \dist(q_j^i,q_k^i) = \dist(q_1^i,q_2^i).
\end{equation*}
A direct application of item (3) in Proposition \ref{3d1} gives  that
\begin{equation*}
u_i(q_j^i)\sigma_i\geq \frac{1}{C}, \hsp\mbox{for } j=1,2.
\end{equation*}
Hence, $u_i(q_k^i)\to +\infty$ as $i\to +\infty$. Now, we take geodesic normal coordinates around $q_1^i$ and rescale the functions $u_i$ in the following way:
\begin{equation*}
v_i(y)=\sigma_i^{\frac{2}{p_i-1}}u_i(\sigma_iy), \hsp \mbox{for } y\in B\left(0,{\sigma_i}^{-1}\right)_+.
\end{equation*}
Moreover, if $q_k^i\in B\left(0,\frac{1}{\sigma_i}\right)$ and if we set $y_k^i=\frac{q_k^i}{\sigma_i}$, then each $y_k^i$ is a local maximum of $v_i$ and $\dist(y_1^i,y_2^i)=\abs{y_2^i}=1$, so up to a subsequence we can assume that $y_2^i\to y_2$ with $\abs{y_2}=1.$ 

\

We claim that both $y_1=0$ and $y_2$ are isolated blow-up points  for $v_i$. In  first place, we check that $v_i(y_j)\to +\infty$ for $j=1,2$.

\

If $v_i(y_2)$ remains bounded but $v_i(0)\to +\infty$, then $0$ is an isolated simple blow-up point for $v_i$ by Proposition \ref{3d7}, while the sequence is bounded from above around $y_2$. Then, by Proposition \ref{tec2}, $v_i(y_2)\to 0$. However, by item (1) of Proposition \ref{3d1} and the fact that the radii must be collapsing, for $\oR>0$,
\begin{equation}
\sigma_i\geq \max\left\lbrace \frac{\oR}{u_i(q_1^i)^{\frac{p_i-1}{2}}},\frac{\oR}{u_i(q_2^i)^{\frac{p_i-1}{2}}} \right\rbrace.\label{3d16}
\end{equation}
Rescaling back the previous inequality we obtain $\min\left\lbrace v_i(0),v_i(y_2) \right\rbrace\geq \oR$, contradicting $v_i(y_2)\to 0$. On the other hand, if both $v_i(0)$ and $v_i(y_2)$ are bounded, we can apply Harnack's inequality and find a limiting function $v_i\to v$ in $C^2_{loc}(\R^n_+)$ such that
\begin{align*}
\left\lbrace \begin{array}{ll} -\Delta v = \frac{n-2}{4(n-1)}K(p)v^{\frac{n+2}{n-2}} & \mbox{on }\R^n_+. \\[4px] \frac{\partial v}{\partial \eta} = \frac{n-2}{2}H(p)v^{\frac{n}{n-2}} & \mbox{on }\partial\R^n_+. \\[4px] \nabla v(0)=\nabla(y_2)=0.\end{array}\right.
\end{align*}
However, the classification given by \cite{ChipotFilaShafrir}, recalled in Proposition \ref{lp2}, yields  $v=0$, which again contradicts \eqref{3d16}. Now, observe that, by (1): 
\begin{equation*}
u_i(x)\leq \frac{C}{\abs{x-q_j^i}}^{\frac{2}{p_i-1}},\hsp\mbox{ for } \abs{x}\leq \frac{\sigma_i}{2}\hsp\mbox{ and}\hsp j=1,2.
\end{equation*}
Then, $v_i$ satisfies
\begin{equation*}
v_i(y)=\sigma_i^{\frac{2}{p_i-1}}u_i(\sigma_iy)\leq \frac{C}{\abs{y-y_j^i}^{\frac{2}{n-2}}}, \hsp \mbox{for }\abs{x}\leq \frac{1}{2}\hsp\mbox{and}\hsp j=1,2,
\end{equation*}
and the claim is proved. Proposition \ref{3d7} then guarantees that $0$ and $y_2$ are isolated simple blow-up points of $v_i$ (after a proper dilation of the arguments of $K_i$ and $H_i$, as in the proof of Proposition \ref{3d7}) and we can apply Proposition \ref{tec2} to obtain 
\begin{equation*}
v_k(0)v_i(y)\to h(y)=a_1\abs{y}^{2-n}+a_2\abs{y-y_2}^{2-n}+b(y), \hsp\mbox{locally in } C^2(\R^n_+\backslash \mathcal{S}),
\end{equation*}
with $\mathcal{S}$ being the blow-up set for $v_i$, and $b$ a function satisfying
\begin{align*}
\left\lbrace\begin{array}{ll} \Delta b = 0 & \mbox{on } \R^n_+\backslash \{\mathcal{S}\backslash \{0,y_2\}\}, \\\frac{\partial b}{\partial \eta }=0 & \mbox{on } \partial  \R^n_+\backslash \{\mathcal{S} \backslash \{0,y_2\}\}.\end{array}\right.
\end{align*}
Define 
\begin{equation*}
f(y)=a_2\abs{y-y_2}^{2-n}+b(y).
\end{equation*} 
It is clear that, for small $r>0$, $f\in C^1(\overline{B_+(r)})$ and, by the maximum principle, $f(0)>0$. Thus, we are under the assumptions of Proposition \ref{pri11} and we can reason as in the last part of Proposition \ref{3d7}.
\end{proof}

\section{Conclusion of the proof of Theorem \ref{compactness}} \label{s:pf-thm} \setcounter{equation}{0}


In this section we conclude the proof of Theorem \ref{compactness}. Let us recall that Proposition \ref{perturbed} and Theorem \ref{compactness} imply Theorem \ref{minmax}.

\subsection{Proof of Theorem \ref{compactness}, (2.2)}

In this subsection we prove that if $n=3$ and the scalar curvature $S$ satisfies $S \leq 0$, then $\mathscr{S}_1 = \emptyset$. For this purpose, we present a capacity argument that exploits fundamental differences between sequences of solutions of \eqref{tfgc1} and sequences of bubbles. Since we proved in the previous section that solutions resemble bubbles at scales of order $1$ near points in $\mathscr{S}_1$, we are able to dismiss this kind of blow-up in three dimensions. We have first  two propositions, 
which holds true for all dimensions  $n \geq 3$. 

\begin{proposition}\label{dtb1} Consider a sequence of positive solutions $(u_i)$  of \eqref{tfgc1} with $K_i\to K<0$ in $C^1(\overline{M})$, $S \leq 0$, and $H_i\to H$ in $C^2(\partial M)$. Then, given a small $\delta >0$, for large enough values of $i$, there exists a positive constant $C=C(\delta)$ such that
	\begin{equation}
	\int_{\{u_i\leq1\}}\frac{\modgrad{u_i}}{{u_i}^{\frac{p_i+3}{2}}} + \int_{\{u_i>1\}}\frac{\modgrad{u_i}}{{u_i}^{p_i+1+\delta}} < C. 
	\end{equation}
\end{proposition}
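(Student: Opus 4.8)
The plan is to test equation \eqref{tfgc1} against a nonlinearity $\psi_i(u_i)$ built as a \emph{piecewise power} of $u_i$, with exponents chosen so that after integration by parts the two potentially unbounded zeroth-order terms become uniformly controlled, while the sign conditions $K_i<0$ and $S\leq 0$ take care of everything else. Concretely, fix $\delta>0$ small and, for $i$ so large that $K_i<0$ on $\overline M$ and $p_i>1$, set
\[
\psi_i(t)=\begin{cases}t^{-\frac{p_i+1}{2}}, & 0<t\leq 1,\\[4pt] t^{-(p_i+\delta)}, & t>1.\end{cases}
\]
This $\psi_i$ is continuous, strictly decreasing and Lipschitz on compact subsets of $(0,\infty)$; since $u_i\in C^2(\overline M)$ is strictly positive on the compact $\overline M$, the function $\psi_i(u_i)$ lies in $H^1(M)$ and is thus an admissible test function (harmless for each fixed $i$, and the final bound will be uniform anyway).

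First I would multiply the interior equation in \eqref{tfgc1} by $\psi_i(u_i)$, integrate over $M$, integrate by parts, and substitute the boundary condition in the form $C_n\,\partial u_i/\partial\eta=2(n-1)H_i\,u_i^{\frac{p_i+1}{2}}$. Using $\psi_i'\leq 0$ and $-K_i=|K_i|>0$, this gives
\[
C_n\int_M|\psi_i'(u_i)|\,|\nabla u_i|^2+\int_M(-S)\,u_i\,\psi_i(u_i)=\int_M|K_i|\,u_i^{p_i}\,\psi_i(u_i)-2(n-1)\int_{\partial M}H_i\,u_i^{\frac{p_i+1}{2}}\,\psi_i(u_i).
\]
Since $S\leq 0$, the second term on the left is nonnegative and can be dropped, and bounding $-H_i\leq|H_i|$ on $\partial M$ yields
\[
C_n\int_M|\psi_i'(u_i)|\,|\nabla u_i|^2\leq\int_M|K_i|\,u_i^{p_i}\,\psi_i(u_i)+2(n-1)\int_{\partial M}|H_i|\,u_i^{\frac{p_i+1}{2}}\,\psi_i(u_i).
\]

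The point I would then verify is that the right-hand side is bounded independently of $i$, which is exactly what the choice of $\psi_i$ secures: splitting each integral over $\{u_i\leq 1\}$ and $\{u_i>1\}$, on $\{u_i\leq 1\}$ one has $u_i^{p_i}\psi_i(u_i)=u_i^{(p_i-1)/2}\leq 1$ and $u_i^{\frac{p_i+1}{2}}\psi_i(u_i)=1$, whereas on $\{u_i>1\}$ one has $u_i^{p_i}\psi_i(u_i)=u_i^{-\delta}\leq 1$ and $u_i^{\frac{p_i+1}{2}}\psi_i(u_i)=u_i^{\frac{1-p_i}{2}-\delta}\leq 1$ (negative exponents and $u_i>1$). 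Hence the right-hand side is at most $\|K_i\|_{\infty}|M|+2(n-1)\|H_i\|_{\infty}|\partial M|$, which is bounded in $i$ by the $C^0$ convergence of $K_i$ and $H_i$. To conclude I would note that $|\psi_i'(t)|=\frac{p_i+1}{2}\,t^{-\frac{p_i+3}{2}}$ on $(0,1]$ and $|\psi_i'(t)|=(p_i+\delta)\,t^{-(p_i+1+\delta)}$ on $(1,\infty)$, with both coefficients bounded below by a positive constant for $i$ large; dividing the previous inequality by $C_n$ times that constant gives
\[
\int_{\{u_i\leq 1\}}\frac{|\nabla u_i|^2}{u_i^{\frac{p_i+3}{2}}}+\int_{\{u_i>1\}}\frac{|\nabla u_i|^2}{u_i^{p_i+1+\delta}}\leq C(\delta),
\]
as desired.

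I do not expect a genuine obstacle here: the argument is essentially a single, carefully weighted integration by parts. The only delicate bookkeeping is (i) the admissibility of $\psi_i(u_i)$, which is trivial for each fixed $i$ since $u_i$ is smooth and bounded away from $0$ on $\overline M$, and (ii) keeping track of where the two sign hypotheses are used: $K_i<0$ is what turns $-\int_M K_i u_i^{p_i}\psi_i(u_i)$ into the bounded positive quantity $\int_M|K_i|u_i^{p_i}\psi_i(u_i)$, and $S\leq 0$ is what allows one to discard $\int_M(-S)u_i\psi_i(u_i)\geq 0$ from the left-hand side; dropping either hypothesis breaks the scheme.
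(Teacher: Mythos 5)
Your proposal is correct and follows essentially the same route as the paper: the same piecewise-power test function, the same integration by parts with the boundary condition substituted, and the same use of $S\leq 0$ to discard the zeroth-order term and of $K_i<0$ to turn the interior term into a bounded positive quantity. The only difference is cosmetic bookkeeping (your exponent checks on $\{u_i\leq 1\}$ and $\{u_i>1\}$ are spelled out a bit more explicitly than in the paper).
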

\begin{proof}
Consider the continuous function
\begin{align*}
f(u)=\left\lbrace\begin{array}{ll}
u^{-\left(\frac{p_i+1}{2}\right)} & \mbox{if } \:0 < u\leq 1, \\
u^{-\left(p_i+\delta\right)} & \mbox{if } \:u> 1.
\end{array}\right.
\end{align*}
 
Multiplying \eqref{tfgc1} by $f(u_i)$ and  integrating by parts, we obtain:
\begin{align*}
&\frac{4(n-1)}{n-2}\int_M f'(u_i)\modgrad{u_i} = -\int_M Su_if(u_i)+\int_{\{u_i\leq1\}}K_iu_i^{\frac{2}{n-2}+o_i(1)}\\&+\int_{\{u\geq 1\}}K_iu_i^{-\delta+o_i(1)}-2(n-1)\int_{\{u_1\leq1\}\cap\de M}H_i \\&+2(n-1)\int_{\{u_i>1\}\cap\de M}H_iu_i^{-\frac{2}{n-2}-\delta+o_i(1)}.
\end{align*}
Notice that all  terms in the right-hand side are uniformly bounded, with the possible exception of the first one: however, that term is non-negative by our assumptions.  Hence, we get an upper bound for the left-hand side: 
\begin{align*}
&\frac{4(n-1)}{n-2}\left\lbrace\frac{p_i+1}{2}\int_{\{u\leq 1\}}\frac{\modgrad{u}}{u^{\frac{p_i+3}{2}}}+\left(p_i-\delta\right)\int_{\{u> 1\}}\frac{\modgrad{u}}{u^{p_i+1+\delta}} \right\rbrace < C, 
\end{align*}
concluding the proof.
\end{proof}
Next, we show that the above property is not satisfied for the \emph{bubbles}, i.e. the solutions to \eqref{lp1} when $\D_n(p)>1$, described in \eqref{lp4}. We recall that this one-parameter family of solutions can be written as follows:
\begin{equation}\label{dtb4}
b_\beta(x)=\frac{C_n\beta^{\frac{n-2}{2}}}{\left(\abs{x-x_0(\beta)}^2-\beta^2\right)^{\frac{n-2}{2}}} \hsp\forall x\in \R^n_+,
\end{equation}
with $C_n=(n(n-2))^\frac{n-2}{4}$ and  $x_0(\beta)=\left(0,\ldots,0,-\D_n(p)\beta\right)$. Straightforward computations show that
\begin{equation*}
\frac{\modgrad{b_\beta}}{{b_\beta}^\mu} = \tilde{C}_n \beta^{(2-\mu)\frac{n-2}{2}}\frac{\abs{x-x_0(\beta)}^2}{\left(\abs{x-x_0(\beta)}^2-\beta^2\right)^{n-\mu\frac{n-2}{2}}},
\end{equation*}
where $\tilde{C}_n = (n-2)^2{C_n}^{2-\mu}$ and $\mu>0$. Moreover, the domain $\{b_\beta\leq 1\}$ is the complement of a ball centered in $x_0(\beta)$ with radius  tending to zero.  Notice that $b_\beta(x)\leq 1$ whenever $\abs{x-x_0(\beta )}^2 > \beta^2 + \sqrt{n(n-2)}\beta =: {r_\beta}^2$, thus 
\begin{equation*}
\{b_\beta\leq 1\}=\R^n_+ \backslash B^n(x_0(\beta),r_\beta).
\end{equation*}
For small enough values of $\beta$, we can take $0< r_\beta < r < R$ so that $$A_\beta(r,R) := A(x_0(\beta),r,R)\cap \R^n_+\subset \{b_\beta\leq 1\}.$$ The aim is now to prove that
\begin{equation}
\label{dtb2}\lim_{\beta\to0}\int_{A_\beta(r,R)}\frac{\modgrad{b_\beta}}{{b_\beta}^\mu} = +\infty 
\end{equation}
for some $0<\mu<\dsh$, obtaining an opposite conclusion to that of Proposition \ref{dtb1}. For a fixed $\beta>0$, 
\begin{equation*}
\int_{A_\beta(r,R)} \frac{\modgrad{b_\beta}}{{b_\beta}^\mu} = \tilde{C}_n\int_r^R\int_{\texttt{SC}^{n-1}(x_0(\beta),s,s-\beta\D_n(p))}\frac{\beta^{(2-\mu)\frac{n-2}{2}}s^2}{(s^2-\beta^2)^{n-\mu\frac{n-2}{2}}}dxds,
\end{equation*}
where $\texttt{SC}^{n-1}(x_0,r,h)$ denotes the $(n-1)-$dimensional spherical cap centered at $x_0\in\R^n$, with radius $r>0$ and height $0\leq h <r$. We know that 
\begin{equation}
\abs{\texttt{SC}^{n-1}(x,r,h)}= w_n r^{n-1} J(r,h),
\end{equation}
for some uniformly bounded function $J$. Thus, for some dimensional constant $\hat{C}_n$,
\begin{equation*}
\int_{A_\beta(r,R)} \frac{\modgrad{b_\beta}}{{b_\beta}^\mu} \geq \hat{C}_n\beta^{(2-\mu)\frac{n-2}{2}} \int_r^R\frac{s^{n+1}}{(s^2-\beta^2)^{n-\mu \frac{n-2}{2}}}ds. 
\end{equation*}
Since  $r> r_\beta > \beta$, the integral is uniformly bounded when $\beta\to0$, so it is enough to take $\mu > 2$ to have \eqref{dtb2}. We then proved the following result. 

\begin{proposition}\label{dtb3} Let $b_\beta$ be the family of functions in \eqref{dtb4}. Then,
	\begin{equation*}
	\lim_{\beta\to0}\int_{A_\beta(r,R)}\frac{\modgrad{b_\beta}}{{b_\beta}^\mu} = +\infty,
	\end{equation*}
	for all $A_\beta(r,R) \subset \{b_\beta \leq 1\}$ and $\mu >2$.
\end{proposition}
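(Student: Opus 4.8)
The plan is to reduce the claimed divergence to an elementary one-dimensional estimate, using the fact that the integrand depends on $x$ only through the distance to $x_0(\beta)$. First I would use the explicit identity recorded just above the statement, namely
\[
\frac{\modgrad{b_\beta}}{{b_\beta}^\mu} = \tilde{C}_n\,\beta^{(2-\mu)\frac{n-2}{2}}\,\frac{\abs{x-x_0(\beta)}^2}{\bigl(\abs{x-x_0(\beta)}^2-\beta^2\bigr)^{\,n-\mu\frac{n-2}{2}}},\qquad \tilde{C}_n=(n-2)^2{C_n}^{2-\mu},
\]
together with the fact that $r_\beta=\sqrt{\beta^2+\sqrt{n(n-2)}\,\beta}\to 0$ as $\beta\to 0$, so that for $\beta$ small one has $\beta<r_\beta<r<R$ and in particular $A_\beta(r,R)\subset\{b_\beta\leq 1\}$ is an annular region sitting at distances bounded away from the singularity of $b_\beta$.

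Next I would integrate in spherical coordinates centered at $x_0(\beta)=(0,\dots,0,-\D_n(p)\beta)$. For each $s\in(r,R)$ the slice $\{\abs{x-x_0(\beta)}=s\}\cap\R^n_+$ is a spherical cap, and since $x_0(\beta)\to 0\in\partial\R^n_+$ as $\beta\to 0$, this cap approaches a half-sphere; hence its $(n-1)$-dimensional measure is comparable to $s^{n-1}$, uniformly for $\beta$ small. Keeping only the lower bound on the cap measure yields
\[
\int_{A_\beta(r,R)}\frac{\modgrad{b_\beta}}{{b_\beta}^\mu}\;\geq\;\hat{C}_n\,\beta^{(2-\mu)\frac{n-2}{2}}\int_r^R\frac{s^{n+1}}{(s^2-\beta^2)^{\,n-\mu\frac{n-2}{2}}}\,ds
\]
for a positive dimensional constant $\hat{C}_n$.

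Then I would observe that, since $\beta<r$ for $\beta$ small, the quantity $s^2-\beta^2$ stays between two fixed positive constants on $[r,R]$, so the remaining one-dimensional integral is bounded above and below by positive constants independent of $\beta$. Consequently the right-hand side is at least $c_0\,\beta^{(2-\mu)\frac{n-2}{2}}$ for some $c_0>0$; since $\mu>2$ and $n-2>0$, the exponent $(2-\mu)\frac{n-2}{2}$ is strictly negative, so this quantity tends to $+\infty$ as $\beta\to 0$. This is precisely \eqref{dtb2}, and the proposition follows.

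The argument is a direct computation, so I do not expect a genuine obstacle. The only point requiring some care is the uniform lower bound on the measure of the spherical cap: one must verify that it does not degenerate to a set of vanishing measure as $\beta\to 0$, which holds precisely because the center $x_0(\beta)$ approaches the boundary hyperplane $\{x_n=0\}$ rather than moving into the interior. It is also worth noting the order of limits — $r$ and $R$ are fixed first and $\beta\to 0$ afterwards — so that the ``bounded above and below'' claims are legitimate and the inclusion $A_\beta(r,R)\subset\{b_\beta\leq 1\}$ is automatic for small $\beta$.
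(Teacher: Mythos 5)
Your proposal is correct and follows essentially the same route as the paper: the explicit identity for $\modgrad{b_\beta}/{b_\beta}^\mu$, slicing $A_\beta(r,R)$ into spherical caps centered at $x_0(\beta)$, the resulting lower bound $\hat{C}_n\,\beta^{(2-\mu)\frac{n-2}{2}}\int_r^R s^{n+1}(s^2-\beta^2)^{-n+\mu\frac{n-2}{2}}\,ds$, and the observation that the one-dimensional integral stays bounded between positive constants while the prefactor diverges for $\mu>2$. Your explicit remark that the cap measure is bounded \emph{below} (because $x_0(\beta)$ approaches the boundary hyperplane, so the caps tend to hemispheres) is a point the paper leaves implicit, and it is handled correctly.
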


\

To prove that $\mathscr{S}_1$ is empty, it is then sufficient to combine Proposition \ref{dtb1}, 
Proposition \ref{dtb3}, Proposition  \ref{3d7} and \eqref{eq:1+o1} in Lemma \ref{l:maxloc}.

\subsection{Proof of Theorem \ref{compactness}, (2.3)}

In this subsection we consider  a sequence of solutions $(u_i)$ to \eqref{tfgc1} 
and we assume that $\mathscr{S}_0 \neq \emptyset$. We recall that the limit profile is one-dimensional, as given in \eqref{lp3}. In particular, we have that $\rho_i :=\int_{\de M}H_i{u_i}^{\frac{p_i+3}{2}}\to +\infty$. 

By assumption, $I_i(u_i)$ is uniformly bounded:
\begin{equation}\label{im2}
4\int_M \modgrad{u_i}+\frac{1}{2}\int_M S {u_i}^2-\frac{1}{p_i+1}\int_M\abs{K_i}{u_i}^{p_i+1} -\frac{2}{p_i+3}\int_{\de M}H_i{u_i}^{\frac{p_i+3}{2}} = O(1). 
\end{equation}
A second relation between the integrals is given by the fact that $I'(u_i)[u_i]=0$, namely
\begin{equation}\label{im3}
8\int_M\modgrad{u_i}+\int_M S{u_i}^2+\int_M\abs{K_i}{u_i}^{p_i+1}-4\int_{\de M}H_i{u_i}^{\frac{p_i+3}{2}} = 0. 
\end{equation}
Using the previous two relations, we will try to estimate the integrals in terms of $\rho_i$. First notice that, by 
the positivity of the first and third terms in \eqref{im3} and H\"{o}lder's inequality,
\begin{equation*}
\int_M S u_i^2 = O\left({\rho_i}^{\frac{2}{p_i+1}}\right).
\end{equation*}
Therefore, by \eqref{im2} and \eqref{im3}
\begin{equation}\label{im4}
8\int_M \modgrad{u_i} = \frac{1}{3}\int_M\abs{K_i}{u_i}^{p_i+1} +o(\rho_i) = \int_{\de M}H_i{u_i}^{\frac{p_i+3}{2}} +o_i(\rho_i).
\end{equation}
Now, take an arbitrary $\varphi \in C^2(M)$, multiply \eqref{tfgc1} by $u_i\varphi$ and integrate by parts. Then, the following identity holds:
\begin{align}\label{im5}
\int_M\left(8\abs{\nabla u_i}^2+\abs{K_i}{u_i}^{p_i+1}\right)\varphi -2\int_{\de M} H_i{u_i}^{\frac{p_i+3}{2}}\varphi \nonumber \\ = -\int_M S{u_i}^2\varphi-8\int_M u_i\nabla u_i\cdot \nabla \varphi.
\end{align}
By H\"{o}lder inequality
\begin{equation}\label{im6}
\int_M \abs{S}{u_i}^2\varphi = O\left({\rho_i}^{\frac{2}{\dst}}\right), \quad \hsp \int_Mu_i\nabla u_i\nabla \varphi = O\left({\rho_i}^{\frac{n-1}{n}}\right).
\end{equation}
The combination of \eqref{im4}, \eqref{im5} and \eqref{im6} yields that there exists a positive measure $\sigma$ defined in $\bar{M}$ such that 
\begin{enumerate}
	\item[(i)] $4\rho_i^{-1}H_i{u_i}^{\frac{p_i+3}{2}}\weakto \sigma|_{\partial M}$, and
	\item[(ii)] $\rho_i^{-1}\left(8\abs{\nabla u_i}^2+\abs{K_i}{u_i}^{p_i+1}\right)\weakto \sigma$ 
	weakly in the sense of measures. 
\end{enumerate} 
Observe that $\supp \sigma \subset \mathscr{S} \subset \{p\in \de M: \D_n(p)\geq 1\}$ by Theorem \ref{compactness}.

In dimension $n=3$, each blow-up point in $\mathscr{S}_1$ is isolated and simple by Theorem \ref{3d15}, and around 
such points we can control $u_i$ with good precision by 
Proposition \ref{tec2}. We combine both results to conclude the following:
\begin{lemma}\label{tfgc11} If $n=3$, then $\supp \sigma \subset \mathscr{S}_0$.
\end{lemma}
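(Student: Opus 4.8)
The plan is to argue by contradiction, isolating a single point of $\mathscr{S}_1$ and showing it cannot lie in $\supp\sigma$. We already know $\supp\sigma\subset\mathscr{S}=\mathscr{S}_0\sqcup\mathscr{S}_1$, so if the lemma failed there would be a point $p\in\mathscr{S}_1\cap\supp\sigma$. First I would exploit the structure near such a point: $\D_n$ is continuous with $\D_n(p)>1$, and by Theorem \ref{3d15} the set $\mathscr{S}_1$ is finite and consists of isolated simple blow-up points. Hence, in geodesic normal coordinates centred at $p$, I can fix $\rho_0>0$ so small that $\D_n>1$ on $\overline{(B_{\rho_0})_+}$ and $\mathscr{S}\cap\overline{(B_{\rho_0})_+}=\{p\}$; then $\supp\sigma\cap\overline{(B_{\rho_0})_+}\subseteq\{p\}$, and it suffices to show $\sigma(\{p\})=0$.

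The heart of the argument is a uniform \emph{local energy bound}: there is a constant $C$, independent of $i$, with
\begin{equation}\label{eq:locbd}
	\int_{(B_{\rho_0})_+}\Big(8\,\modgrad{u_i}+|K_i|\,u_i^{p_i+1}\Big)\le C .
\end{equation}
Granting \eqref{eq:locbd}, take a cutoff $\varphi\in C^\infty(\overline M)$, $0\le\varphi\le1$, with $\varphi\equiv1$ near $p$ and $\supp\varphi$ contained in the geodesic ball of radius $\rho_0$ about $p$; testing property (ii) in the construction of $\sigma$ gives
\begin{equation*}
	\sigma(\{p\})\le\int_{\overline M}\varphi\,d\sigma=\lim_{i\to\infty}\rho_i^{-1}\int_M\varphi\Big(8\,\modgrad{u_i}+|K_i|\,u_i^{p_i+1}\Big)\le\lim_{i\to\infty}\frac{C}{\rho_i}=0,
\end{equation*}
because $\rho_i=\int_{\partial M}H_i u_i^{\frac{p_i+3}{2}}\to+\infty$. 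Thus $\sigma(\{p\})=0$, the desired contradiction.

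To prove \eqref{eq:locbd} I would reuse the analysis of Section \ref{s:i-s-bu}. Let $(x_i)\to p$ be the local maxima from Lemma \ref{l:maxloc}, set $\delta_i=u_i(x_i)^{-\frac{p_i-1}{2}}\to0$, and split $(B_{\rho_0})_+$ into the concentration region $B(x_i,\oR_i\delta_i)_+$ and its complement (the ``neck''). On the concentration region, rescaling by $v_i(y)=u_i(x_i)^{-1}u_i(\delta_i y+x_i)$ and using Lemma \ref{l:maxloc} ($v_i\to b_{\beta_0}$ in $C^2(B_{\oR_i})$), a change of variables turns the two integrals over $B(x_i,\oR_i\delta_i)_+$ into powers of $u_i(x_i)$ whose exponents tend to $0$ as $p_i\nearrow\frac{n+2}{n-2}$ (here one uses Lemma \ref{tec11}, $u_i(x_i)^{\tau_i}=1+o_i(1)$) times integrals of $b_{\beta_0}$ that converge to finite quantities by the decay \eqref{lp18}; this is precisely the bookkeeping already carried out in the proofs of Lemmas \ref{tec11} and \ref{tec15}, so it produces an $O(1)$ bound. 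On the neck, the sharp estimate \eqref{tec4}, $u_i(x)u_i(x_i)\le C|x-x_i|^{2-n}$, together with standard interior and boundary elliptic estimates for the now-small $u_i$, makes the neck contributions to $\int\modgrad{u_i}$ and $\int|K_i|u_i^{p_i+1}$ negligible, i.e. $o_i(1)$ (after inserting $\delta_i$ these are bounded by negative powers of $\oR_i$). Alternatively, when $n=3$ one can shorten this: multiplying \eqref{tfgc1} by $u_i$, integrating by parts over $(B_{\rho_0})_+$, and using \eqref{tec4} to discard the outer spherical boundary term and the term $\int S u_i^2$, reduces \eqref{eq:locbd} to the uniform boundedness of the single boundary mass $\int_{\partial_0(B_{\rho_0})_+}H_i u_i^{\frac{p_i+3}{2}}$, which again follows from the bubble profile near $x_i$ and \eqref{tec4} on the neck. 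The main obstacle I anticipate is exactly this scale-matching in \eqref{eq:locbd} — checking that every exponent of $u_i(x_i)$ closes up to $0$ and that the neck contributions genuinely vanish — and it is here, through Theorem \ref{3d15} (blow-ups in $\mathscr{S}_1$ are isolated and simple) and Proposition \ref{tec2}, that the restriction $n=3$ is used.
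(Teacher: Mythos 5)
Your argument is correct and follows essentially the same route as the paper: both proofs isolate a point of $\mathscr{S}_1$, invoke Theorem \ref{3d15} and the decay of Proposition \ref{tec2} (with Lemma \ref{tec11}) to show that the un-normalized local mass near that point is $O(1)$, and then divide by $\rho_i\to+\infty$ to conclude $\sigma(\{p\})=0$. The only cosmetic difference is that you test the interior density from item (ii), whereas the paper tests the boundary density $H_iu_i^{(p_i+3)/2}$ from item (i); the two are equivalent via the identity \eqref{im5}.
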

\begin{proof} Supose by contradiction that there exists $p\in \mathscr{S}_1$ and take $\varphi\in C^2(M)$, with compact support contained in $B_\e(p)$, such that 
\begin{equation}\label{im8}
\int_{\partial M}\varphi\:d\sigma >0.
\end{equation}
Then, for large enough $i$, there exists a constant $C>0$ such that 
\begin{equation*}
{\rho_i}^{-1}\int_{\partial M} H_i {u_i}^{\frac{p_i+3}{2}}\varphi ds_g = {\rho_i}^{-1}\int_{B_\e\cap \de M} H_i {u_i}^{\frac{p_i+3}{2}}\varphi ds_g > C^{-1}.
\end{equation*}
However, taking normal coordinates at $p$ and using Proposition \ref{tec2} jointly with 
Lemma \ref{tec11}, we find 
\begin{align*}
{\rho_i}^{-1}\int_{\partial M\cap B_\e} H_i {u_i}^{\frac{p_i+3}{2}}\varphi ds_g &\simeq {\rho_i}^{-1} H(p) \int_{\R^{2}}\frac{1}{(1+\abs{\tilde{x}}^2)^2}d\tilde{x} \\[0.3cm] & \leq {\rho_i}^{-1}O(1)\to 0,
\end{align*}
where '$\simeq$' represents equality up to a factor $(1+O(\e^2))(1+o_i(1))$ and $\tilde{x}=(x_1,\ldots,x_{n-1},0)$. This is a contradiction to \eqref{im8}.
\end{proof}
We can extract more information from the finite-energy hypothesis on $(u_i)$. In fact, following the steps of the proof for \eqref{ti2}, we obtain the following identities for the limiting measure $\sigma$.
\begin{proposition}\label{im9} For $n = 3$, let $(u_i)$ be a sequence of solutions of \eqref{tfgc1} with $I_i(u_i)$ uniformly bounded and $\mathscr{S}_0 \neq \emptyset$. Then,
	\begin{enumerate}
		\item[(i)] $4{\rho_i}^{-1}H_i{u_i}^{\frac{p_i+3}{2}}\weakto \sigma$,
		\item[(ii)] ${\rho_i}^{-1}\abs{K_i}{u_i}^{p_i+1} \weakto \frac{3}{4}\sigma$, and
		\item[(iii)] $8{\rho_i}^{-1}\modgrad{u_i} \weakto \frac{1}{4}\sigma.$
	\end{enumerate}
\end{proposition}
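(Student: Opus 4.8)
We first observe that item (i) is already available: it is the first of the two weak limits recorded just before Lemma \ref{tfgc11}, namely $4\rho_i^{-1}H_i u_i^{\frac{p_i+3}{2}}\weakto\sigma$, and since $\supp\sigma\subset\mathscr{S}_0\subset\partial M$ by Lemma \ref{tfgc11} this is a statement about measures on $\overline M$. The second of those limits reads $\rho_i^{-1}\big(8\modgrad{u_i}+\abs{K_i}u_i^{p_i+1}\big)\weakto\sigma$. The nonnegative measures $\rho_i^{-1}\abs{K_i}u_i^{p_i+1}$ have uniformly bounded mass, so along a subsequence $\rho_i^{-1}\abs{K_i}u_i^{p_i+1}\weakto\mu$ for some nonnegative $\mu$; subtracting, $8\rho_i^{-1}\modgrad{u_i}\weakto\sigma-\mu\ge0$, and in particular $\mu\le\sigma$, so both $\mu$ and $\sigma-\mu$ are supported in $\mathscr{S}_0$. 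By \eqref{im4} their total masses are $\mu(\overline M)=\tfrac34\sigma(\overline M)$ and $(\sigma-\mu)(\overline M)=\tfrac14\sigma(\overline M)$. Hence (ii) and (iii) will follow as soon as we prove the sharp measure inequality $\mu\le\tfrac34\sigma$: together with the equality of masses this forces $\mu=\tfrac34\sigma$ (hence $8\rho_i^{-1}\modgrad{u_i}\weakto\tfrac14\sigma$), and uniqueness of the limit upgrades both convergences to the full sequences.

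To prove $\mu\le\tfrac34\sigma$ I would re-run the computation in the proof of Proposition \ref{traceineq}, localised near $\mathscr{S}_0$. Fix $\e>0$. Since $\D_n\equiv 1$ on the compact set $\mathscr{S}_0$, while $\mathscr{S}_1$ is finite (Theorem \ref{3d15}) and disjoint from $\mathscr{S}_0$, there is a neighbourhood $U=U_\e$ of $\mathscr{S}_0$ with $\overline U\cap\mathscr{S}_1=\emptyset$ and $\D_n<1+\e$ on $\overline U\cap\partial M$. Take $\varphi\in C^2(M)$, $\varphi\ge0$, $\supp\varphi\subset U$, extend $H_i$ to $M$, and pick $N\in\mathfrak X(M)$ with $\abs N\le1$ and $N=\eta$ on $\partial M$. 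The divergence theorem applied to $\varphi H_i u_i^{\frac{p_i+3}{2}}N$, exactly as in \eqref{ti1}, gives
\[
\int_{\partial M}\varphi H_i u_i^{\frac{p_i+3}{2}}=\frac{p_i+3}{2}\int_M\varphi H_i u_i^{\frac{p_i+1}{2}}(\nabla u_i\cdot N)+O\Big(\int_M u_i^{\frac{p_i+3}{2}}\Big),
\]
where the error is $o(\rho_i)$ by H\"older's inequality, exactly as the terms in \eqref{im6}. On $U$ one has $\sqrt{n(n-1)}\,\abs{H_i}/\sqrt{\abs{K_i}}\to\D_n<1+\e$, hence $\abs{H_i}u_i^{\frac{p_i+1}{2}}\le\frac{1+\e}{\sqrt{n(n-1)}}\sqrt{\abs{K_i}}\,u_i^{\frac{p_i+1}{2}}$ for $i$ large; Young's inequality with the critical parameter $\lambda=\frac{2\sqrt{n(n-1)}}{n-2}$, just as in the proof of Proposition \ref{traceineq}, then yields
\[
\int_{\partial M}\varphi H_i u_i^{\frac{p_i+3}{2}}\le\frac{p_i+3}{2}\cdot\frac{1+\e}{\sqrt{n(n-1)}}\int_M\varphi\Big(\frac{\lambda}{2}\modgrad{u_i}+\frac1{2\lambda}\abs{K_i}u_i^{p_i+1}\Big)+o(\rho_i).
\]

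Dividing by $\rho_i$ and letting $i\to\infty$ along the subsequence, using $\frac{p_i+3}{2}\to\dsh$, $\rho_i^{-1}H_i u_i^{\frac{p_i+3}{2}}\weakto\tfrac14\sigma$, $\rho_i^{-1}\modgrad{u_i}\weakto\tfrac18(\sigma-\mu)$ and $\rho_i^{-1}\abs{K_i}u_i^{p_i+1}\weakto\mu$, the numerical factors collapse (precisely as they do in the passage leading to \eqref{ti2}) to the inequality, valid for $n=3$,
\[
\frac14\int_M\varphi\,d\sigma\le(1+\e)\Big(\frac12\int_M\varphi\,d(\sigma-\mu)+\frac16\int_M\varphi\,d\mu\Big).
\]
A short rearrangement followed by $\e\to0$ gives $\int_M\varphi\,d\mu\le\tfrac34\int_M\varphi\,d\sigma$. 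For an arbitrary nonnegative $\psi\in C^2(M)$ one applies this with $\varphi=\chi_\e\psi$, where $\chi_\e$ is a cutoff equal to $1$ on a neighbourhood of $\mathscr{S}_0$ and supported in $U_\e$; since $\mu$ and $\sigma$ are supported in $\{\chi_\e=1\}$, the cutoff disappears from the two integrals, and letting $\e\to0$ yields $\int\psi\,d\mu\le\tfrac34\int\psi\,d\sigma$ for every such $\psi$, i.e.\ $\mu\le\tfrac34\sigma$ as measures. This completes the proof, as explained in the first paragraph.

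The crux is obtaining the \emph{sharp} constant $\tfrac34$, which is also why the localisation near $\mathscr{S}_0$ is essential: there the constant $\bar D$ of \eqref{ti2} is effectively $1$, and only for the critical value $\lambda=\frac{2\sqrt{n(n-1)}}{n-2}$ does the whole chain of estimates — discarding the tangential part in the normal-derivative step, then $\sqrt{n(n-1)}\abs H\le\sqrt{\abs K}$, then Young — become asymptotically tight; its equality case is exactly the one-dimensional horosphere profile $v_\alpha$ of \eqref{lp3}, which is the blow-up profile at points of $\mathscr{S}_0$, so the heuristic and the rigorous bound match. One also has to be sure that all error terms are genuinely $o(\rho_i)$, which is where $\int_M u_i^{\frac{p_i+3}{2}}=o(\rho_i)$ enters, and one needs $\supp\sigma\subset\mathscr{S}_0$ (Lemma \ref{tfgc11}): the contributions of every integral away from $\mathscr{S}_0$ — in particular those concentrated near the finitely many points of $\mathscr{S}_1$ when $n=3$, controlled by \eqref{eq:1+o1} — are only $o(\rho_i)$, so nothing is lost in passing to the localised estimate.
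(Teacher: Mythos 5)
Your proposal is correct and follows the same overall strategy as the paper's proof: apply the divergence theorem to the field $\varphi H_i u_i^{\frac{p_i+3}{2}}N$, bound the resulting interior term by Young's inequality, pass to the limit in the normalized measures, and close via the mass identity \eqref{im4}. The one substantive difference is your choice of the Young parameter, and it matters. In the paper's parametrization the step reads $H_i\abs{\nabla u_i}u_i^{\frac{p_i+1}{2}}\le\frac{\delta^2}{2}\modgrad{u_i}+\frac{1}{2\delta^2}H_i^2u_i^{p_i+1}$, and the paper takes $\delta^2=\tfrac23$, which (with $\sigma_1=\sigma-\mu$, $\sigma_2=\mu$ in its notation) yields $\sigma_1\le3\sigma_2$, i.e. $\mu\ge\frac14\sigma$; together with the mass identity $\mu(\overline M)=\frac34\sigma(\overline M)$ this does \emph{not} force $\mu=\frac34\sigma$ (for instance $\mu=\delta_a+2\delta_b$, $\sigma-\mu=\delta_a$ satisfies both constraints), so the paper's closing sentence ``Equality follows from \eqref{im4}'' is not justified for that direction of the inequality. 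Your choice $\lambda=\frac{2\sqrt{n(n-1)}}{n-2}$ corresponds to $\delta^2=2$ and produces the complementary bound $\mu\le\frac34\sigma$, for which the nonnegative measure $\frac34\sigma-\mu$ has zero total mass and hence vanishes; this is exactly the direction needed for the saturation argument, so your write-up actually completes the step the paper leaves implicit (or gets backwards). The remaining ingredients of your argument --- the $o(\rho_i)$ control of the lower-order terms via H\"older as in \eqref{im6}, the inclusion $\supp\mu\subset\supp\sigma\subset\mathscr{S}_0$ from $\mu\le\sigma$ and Lemma \ref{tfgc11} so that the localization and the cutoff are harmless, the use of $\D_n<1+\e$ near $\mathscr{S}_0$ in place of the paper's $H^2=\frac16\abs{K}+o_\e(1)$, and the upgrade from subsequential to full convergence by uniqueness of the limit --- are all handled correctly.
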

\begin{proof} Take any test function $\phi \in C^2_c(M)$ and a vector field $N$ on $M$ with $\abs{N}\leq 1$ and such that $N=\eta$ on $\partial M$. Straightforward computations show that, calling $X=H_i\phi u^{\frac{p_i+3}{2}} N$,
\begin{align} \dive(X) = u^{\frac{p_i+3}{2}} \left(H_i\phi\dive N + \phi N\cdot \nabla H_i+H_i\nabla \phi\cdot N+\frac{p_i+3}{2}H_i\phi \frac{\nabla u}{u}\cdot N\right).
\end{align}
Thus, by the Divergence Theorem, there exists a constant $C$, which depends on $\norm{\phi}_{C^1}$, $\norm{N}_{C^1}$ and $\norm{H}_\infty$, such that
\begin{equation*}
\int_{\de M}H_i\phi u^{\frac{p_i+3}{2}} \leq C\int_M u^{\frac{p_i+3}{2}} + \frac{p_i+3}{2} \int_M H_i \phi \abs{\nabla u}u^{\frac{p_i+1}{2}}.
\end{equation*}
By Cauchy-Schwartz's inequality,
\begin{align}\label{tfgc10}
\int_{\de M}H_i\phi u^{\frac{p_i+3}{2}} \leq C\int_M u^{\frac{p_i+3}{2}} + \frac{p_i+3}{2} \left(\frac{\delta^2}{2}\int_M \modgrad{u}\phi+\frac{1}{2\delta^2}\int_M H_i^2u^{p_i+1}\phi\right),
\end{align}
for every $\delta >0$. Now, let us define the measures $\sigma_1, \sigma_2$ by
\begin{enumerate}
	\item[(i)] $8{\rho_i}^{-1}\modgrad{u_i}\weakto \sigma_1$, and
	\item[(ii)] ${\rho_i}^{-1}\abs{K}{u_i}^{p_i+1} \weakto \sigma_2$.
\end{enumerate}
Notice that by \eqref{im4} and the definition of $\rho_i$ these measures are finite. 
By Lemma \ref{tfgc11}, we also know that $\sigma_1+\sigma_2 = \sigma,$ and that $\supp \sigma_j \subset \{\D_n = 1\}$. Multiplying by ${\rho_i}^{-1}$ and taking limits, using the fact that $H^2 =\frac 16 \abs{K}$ on $\{\D_n = 1\}$,
we obtain 
\begin{align*}
	\frac{1}{4}\int_{\de M}\phi\,d\sigma &\leq \frac{\delta^2}{4}\int_{\de M} \phi\,d\sigma_1+\frac{1+o_\e(1)}{3 \delta^2}\int_{\de M} \phi\,d\sigma_2.
\end{align*}
Therefore,
\begin{equation*}
\left(\frac{1}{4}-\frac{\delta^2}{4}\right)\int_{\de M} \phi\,d\sigma_1 \leq \left(\frac{1}{3\delta^2}-\frac{1}{4}\right)\int_{\de M}\phi\,d\sigma_2.
\end{equation*}
Choosing $\delta^2 = \frac{2}{3}$, we get the inequality
\begin{equation*}
\int_{\de M}\phi\,d\sigma_1 \leq 3 \int_{\de M}\phi\,d\sigma_2.
\end{equation*}
Equality follows from \eqref{im4}.
\end{proof}
Our final step will be to show that the support of the measure $\sigma$ is formed by critical points of $\D_n$. In this way, the assertion (2.3) of Theorem \ref{compactness} would be proved.

%
%
%
\begin{proposition}\label{p:tfgc12}For $n = 3$, let $(u_i)$ be a sequence of solutions of \eqref{tfgc1} with $I_i(u_i)$ uniformly bounded and $\mathscr{S}_0 \neq \emptyset$. Then, $\nabla^T \D_n = 0$ on  $\supp \sigma$. 
\end{proposition}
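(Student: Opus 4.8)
## Proof Strategy for Proposition \ref{p:tfgc12}

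\textbf{Overall approach.} The plan is to apply the domain-variation identity of Corollary \ref{pi4} (and its refinement in Proposition \ref{pri11}) with a vector field tailored to probe the tangential direction along $\partial M$ near a point $p \in \supp \sigma \subset \{\D_n = 1\}$. Because the blow-up profile at such a point is one-dimensional (the function $v_\alpha$ in \eqref{lp3}), after the appropriate rescaling all the bulk terms that carry derivatives of $K$, $S$ and $H$ should, at leading order, only see the \emph{tangential} gradient of $\D_n$; the boundary term $\int_{\partial^+\Omega} B(u,\nabla u)$ will be controlled by the precise description of $u_i$ near the blow-up, and the terms involving $\nabla^T \mathfrak h$ will produce exactly $\nabla^T \D_n$ after we use the relation $H^2 = \tfrac16 |K|$ on $\{\D_n = 1\}$ together with Proposition \ref{im9}. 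The conclusion is then that if $\nabla^T \D_n(p) \neq 0$, the identity forces a contradiction after dividing by the divergent quantity $\rho_i$ and letting $i \to \infty$.

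\textbf{Key steps, in order.} First I would fix $p \in \supp \sigma$, take Fermi (boundary normal) coordinates centered at $p$ so that $\partial M$ is locally $\{x_n = 0\}$, and choose a vector field of the form $F = \phi(x)\, e$, where $e$ is a fixed unit tangent direction at $p$ and $\phi$ is a cutoff equal to $1$ on $B_r$ and supported in $B_{2r}$ — so $F$ is tangent to $\partial M$ and $F \cdot \eta = 0$ on $\partial_0 \Omega$. Plugging this $F$ into Lemma \ref{pi1} (the general domain-variation identity), the left-hand side becomes a combination of $\int DF(\nabla u_i, \nabla u_i)$, $\int |\nabla u_i|^2 \dive F$, and the terms $\int \mathfrak f\, F \cdot \nabla(u_i^{p_i+1})$ and $\int \mathfrak g\, F\cdot \nabla(u_i^2)$; integrating by parts moves the derivative onto $\mathfrak f = K_i$ and $\mathfrak g = S$, producing the "good" terms $\int u_i^{p_i+1}\, F \cdot \nabla K_i$ and $\int u_i^2\, F \cdot \nabla S$, plus the boundary remainder on $\partial^+\Omega$ and a boundary term on $\partial_0\Omega$ of the form $\int_{\partial_0\Omega} H_i u_i^{\frac{p_i+3}{2}} (F \cdot \nabla^T \text{-type})$. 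Second, I would normalize by $\rho_i$ and pass to the limit using Proposition \ref{im9}: the gradient term and the $|K_i|u_i^{p_i+1}$ term converge (weakly) to $\tfrac14 \sigma$ and $\tfrac34 \sigma$ respectively, the $S$-terms are $O(\rho_i^{2/2^*}) = o(\rho_i)$ and drop out, and the terms with $\dive F$ or $\nabla \phi$ are supported in the annulus $B_{2r}\setminus B_r$ and vanish as $r \to 0$ by the fact that $\sigma$ is concentrated near $p$. Third, the crucial surviving contributions are $\rho_i^{-1}\int u_i^{p_i+1}\, \phi\, e\cdot \nabla K_i$ and $\rho_i^{-1}\int_{\partial_0\Omega} H_i u_i^{\frac{p_i+3}{2}}\, \phi\, e \cdot \nabla^T H_i$ (coming from the last term of \eqref{pi9}); using $\sigma_1 = \tfrac14\sigma$, $\sigma_2 = \tfrac34\sigma$ and $\sqrt{n(n-1)}\,H = \sqrt{|K|}$ on $\{\D_n=1\}$, these combine to a positive multiple of $e \cdot \nabla^T \D_n(p)\, \int \phi\, d\sigma$, while the boundary remainder $\int_{\partial^+\Omega} B(u_i,\nabla u_i)$ — analyzed via the reflected-harmonic decomposition $u_i(x_i) u_i \to a|x|^{2-n} + b$ as in Proposition \ref{pri11}, or rather its $1$-dimensional analogue appropriate to the profile $v_\alpha$ — is of lower order. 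Finally, since $e$ is an arbitrary tangent vector and $\int \phi\, d\sigma > 0$ for suitable $\phi \geq 0$ supported near $p$, we conclude $\nabla^T \D_n(p) = 0$.

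\textbf{Main obstacle.} The hard part will be controlling the boundary term $\int_{\partial^+\Omega} B(u_i, \nabla u_i)$ and showing it does not contribute at order $\rho_i$. Unlike the isolated-simple case of Section \ref{s:i-s-bu}, where $u_i(x_i) u_i$ converges to a clean $a|x|^{2-n} + b$ profile on a fixed ball, here the blow-up at points of $\mathscr{S}_0$ has a \emph{one-dimensional} limiting profile with no pointwise upper bound (the conformal factor genuinely diverges on an entire portion of the boundary, cf. the explicit example in Section \ref{s:prel}), so one must instead work with the weak-measure convergence from Proposition \ref{im9} and show that $B(u_i,\nabla u_i)$, after integration over $\partial^+ B_r$ and normalization, tends to $0$ as first $i \to \infty$ and then $r \to 0$. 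This should follow because $B(u_i, \nabla u_i)$ is quadratic in $u_i$ and $\nabla u_i$ with a factor $|x - p|$ in front, and on $\partial^+ B_r$ (away from $\partial M$, where the profile is not singular transversally but the diverging direction is along the boundary) the relevant integrals scale favorably; making this rigorous, possibly by a further cutoff separating the region very close to $\partial M$ from the rest, together with keeping track of the anisotropic nature of the $1$-d profile, is the delicate technical core of the argument. A secondary subtlety is ensuring the manifold/metric error terms (Christoffel symbols, the fact that $|x-y|$ is only the Riemannian distance, deviations of $g$ from the Euclidean metric) are genuinely lower order after the rescaling — this is routine but must be checked since we are now using non-radial, merely tangential deformations.
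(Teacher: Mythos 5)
Your overall skeleton (a tangential domain variation, normalization by $\rho_i$, passage to the limit via Proposition \ref{im9}, and the algebraic identity $\tfrac1H\nabla^T H-\tfrac1{2K}\nabla^T K=\nabla^T\D_n$ on $\{\D_n=1\}$) matches the paper, and your identification of the surviving first-order terms is correct. But there is a genuine gap in the middle: you dismiss $\int_M DF(\nabla u_i,\nabla u_i)$, $\int_M\modgrad{u_i}\dive F$ and $\int_{\partial M}H_iu_i^{\frac{p_i+3}{2}}\dive^T F$ as annulus-supported and hence negligible. That is only true in the flat half-space with a literally constant direction $e$. On a curved manifold the covariant derivatives of your field $F=\phi\,e$ do not vanish where $\phi\equiv 1$ (in Fermi coordinates the Christoffel symbols are $O(1)$ at the boundary, carrying the second fundamental form), so $DF(\nabla u_i,\nabla u_i)$ and $\dive F$ contribute at order $\rho_i$ and cannot be absorbed. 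The paper resolves this with two ingredients you are missing. First, a preliminary application of the identity with $F=\mathtt{d}\,\chi(\mathtt{d})\nabla\mathtt{d}$ ($\mathtt{d}$ the distance to $\partial M$) yields $\int_M\modgrad{u_i}=(1+o_i(1))\int_M(\nabla u_i\cdot\nabla\mathtt{d})^2$, i.e.\ the gradient concentrates in the normal direction. Second, the tangential field $V$ is extended to $M$ with $\dive F=0$ near $\partial M$; the relation $0=\dive F=\dive^T V+h\langle\eta,F\rangle+D_\eta F_\eta$ then gives $D_\eta F_\eta=-\dive^T V$, so by the normality of the gradient $-8\rho_i^{-1}\int_M DF(\nabla u_i,\nabla u_i)\to\frac14\int_{\partial M}\dive^T V\,d\sigma$, which cancels \emph{exactly} against the limit of $\rho_i^{-1}\frac{8}{p_i+3}\int_{\partial M}H_iu_i^{\frac{p_i+3}{2}}\dive^T F$. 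Without this cancellation mechanism your limiting identity would retain uncontrolled geometric terms of the same order as $e\cdot\nabla^T\D_n(p)$.

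Two smaller points. The "main obstacle" you single out, $\int_{\partial^+\Omega}B(u_i,\nabla u_i)$, is not where the difficulty lies: with a compactly supported cutoff (or, as in the paper, with a globally defined field and integration over all of $M$) this term simply does not appear, so there is no need for the delicate anisotropic analysis of the one-dimensional profile you sketch. Also, your localization near a single $p\in\supp\sigma$ requires choosing radii along which $\sigma(\overline{B_{2r}}\setminus B_r)$ is small relative to $\sigma(B_r)$, since $\sigma$ need not be doubling; the paper avoids this entirely by testing against arbitrary tangent fields $V$ and concluding $\int_{\partial M}(V\cdot\nabla^T\D_n)\,d\sigma=0$ globally.
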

\begin{proof}
%
%
Integrating by parts in \eqref{pi3}, we obtain the following identity:
\begin{align} \label{tfgc13} \nonumber
\frac{1}{p_i+1}\int_M u_i^{p_i+1} F\cdot \nabla K_i +  \frac{8}{p_i+3}\int_{\de M} u_i^{\frac{p_i+3}{2}} F\cdot \nabla^T H_i + \frac{1}{p_i+1}\int_M K_i u_i^{p_i+1} \dive F \\ \nonumber +  \frac{8}{p_i+3} \int_{\de M}H_i u_i^{\frac{p_i+3}{2}} \dive^T F - 4\int_M \modgrad{u_i}\dive F + \frac{1}{2} \int_M S F\cdot \nabla (u_i^2)= \\ 
\frac{1}{p_i+1} \int_{\de M} K_i u_i^{p_i+1} F \cdot \eta - 8\int_M DF(\nabla u_i, \nabla u_i) - 4\int_{\de M}\modgrad{u_i}F\cdot \eta.
\end{align}

The general idea is to choose suitable vector fields $F$ in this identity, divide by $\rho_i$ and take the limit. Using H\"{o}lder's inequality, one can see that

\begin{equation*}  \rho_i^{-1} \int_M S\  F \cdot \nabla(u_i^2)  \to 0. \end{equation*}


As a first step we consider the distance function $\mathtt{d}$ from the boundary, and with 
positive sign inside $M$. Given a small number $\delta >0$, we consider smooth non-increasing cut-off function $\chi_\delta(t)$ such that 
$$
  \begin{cases}
  	\chi(t) = 1 & t \leq \delta; \\ 
  	\chi(t) = 0 & t \geq 2 \delta, 
  \end{cases}
$$
and the vector field $F = \mathtt{d} \chi(\mathtt{d}) \nabla \mathtt{d}$. It is well known that 
the divergence of $\nabla \mathtt{d}$ equals the opposite of the mean curvature of the 
level sets of $\mathtt{d}$, see e.g. \cite[Chapter 6: Exercise 11.d]{docarmo}, 
which are smooth near $\partial M$. Therefore we obtain that 
$$
  \dive F = 1 + O(\mathtt{d}); \quad F|_{\partial M} = 0. 
$$

By \eqref{tfgc13}, we obtain
\begin{align*} 
	\frac{1+o_i(1)}{\dst}\int_M \abs{K_i} u^{p_i+1}   + (4+o_i(1))\int_M \modgrad{u_i} =  (8+o_i(1))\int_M 
	(\nabla u_i \cdot \, \nabla \mathtt{d})^2. 
\end{align*}
Using Proposition \ref{im9} we then deduce  
\begin{equation} \label{tfgc15}
	\int_M \modgrad{u_i} = (1 + o_i(1)) \int_M (\nabla u_i \cdot \, \nabla \mathtt{d})^2, 
\end{equation}
which means that the gradient of $u_i$ is mostly normal to the boundary.

We now choose an arbitrary vector field $V$ tangent to $\partial M$, and extend it as a vector 
field $F$ in the interior of $M$ such that $\dive F = 0$ in a neighborhood of 
$\partial M$. Observe that now all terms in \eqref{tfgc13} involving  $F \cdot \nu$ on $\partial M$ or $\dive F$ will cancel.

If one splits on $\partial M$ a vector field $\tilde{F}$ into its tangential component $\tilde{F}^T$ 
and it normal one $\tilde{F}_\eta$,  for a tangent vector $v$ to $\partial M$ 
there holds $\nabla^M_v \tilde{F}^T = \nabla^{\partial M}_v \tilde{F}^T + A(v,\tilde{F}^T)\eta$,
where $A$ is the second fundamental form of $\partial M$. Taking the 
trace of this relation and adding the covariant derivative of the normal component, 
one finds for $\tilde{F} = F$ as above 
$$
  0 = (\dive_M F)|_{\partial M} = \dive^T F + h \langle \eta, F \rangle + D_{\eta} F_\eta, 
$$
where $h$ is the mean curvature of the boundary. By the fact that $\langle \eta, F \rangle = 0$ 
and $F = V$ on $\partial M$,  this implies in particular  
$$
  D_{\eta} F_\eta = - \dive^T V \mbox{ on } \partial M. 
$$
By \eqref{tfgc15} we then find 
$$
 - 8 \rho_i^{-1} \int_M DF(\nabla u_i, \nabla u_i)  \longrightarrow  \frac{1}{4} \int_{\partial M} \dive^T V \, d \sigma. 
$$

Observe also that, by Proposition  \ref{im9},

$$ \rho_i^{-1} \frac{8}{p_i+3} \int_{\de M} H_i u_i^{\frac{p_i+3}{2}} \dive^T F \to \frac{1}{4} \int_{\partial M} \dive^T V \, d \sigma. $$

Hence, when we divide \eqref{tfgc13} by $\rho_i$ and pass to the limit, the above two terms cancel and we obtain:

$$  \frac 14 \int_{\partial M} \frac{1}{H} \, (V \cdot \nabla^T H) \, d \sigma - \frac 18 
  \int_{\partial M} \frac{1}{K} \,  (V \cdot \,  \nabla^T K) \, d \sigma = 0.
$$
Notice however that, by Lemma \ref{tfgc11}, $\sigma$  is supported 
in $\{\D_n = 1 \}$, and hence on its support we have that $\frac{1}{H} \nabla^T H - \frac{1}{2K} \nabla^T K  
= \frac{1}{\D_n} \nabla^T \D_n = \nabla^T \D_n$. This observation and the 
last formula then imply 
$$
 \int_{\partial M} (V \cdot \, \nabla^T \D_n) \, d \sigma = 0 
$$
for all vector fields $V$ on $\partial M$. This means that $\nabla^T \D_n = 0$ on the support of $\sigma$, 
as desired. 
\end{proof}

\medskip

The proposition implies (2.3) in Theorem \ref{compactness}, since we were assuming that 
$1$ is a regular value of $\D_n$.

\medskip 

\begin{remark}\label{r:dim-D}
The proof of the latter proposition would work with minor changes in general dimension 
if we had anyway the conclusion of Lemma \ref{tfgc11} about the support of the measure $\sigma$.  	
\end{remark}

\section{Appendix: proof of Lemma \ref{tf15}} \label{s:app} \setcounter{equation}{0}

\begin{proof}  Since $M$ is a smooth manifold with boundary, we can find an extension $(\hat{M},\hat{g})$ of 
	the Riemannian structure including an exterior neighborhood of $\partial M$. 
	For $p\in\partial M$, let $\eta(p)$ be the  the exterior unit normal vector: 
	given two  two positive parameters $\beta$ and $D$, with $\beta$ small and $D > \frac{1}{\sqrt{n(n-1)}}$, 
	define the point 
	\begin{equation*}
		P_{\beta,D}= Exp^{\hat{g}}_p  (\sqrt{n(n-1)}D\beta\eta(p)), 
	\end{equation*}
where $Exp^{\hat{g}}_p $ stands for the exponential map of $\hat{g}$ at $p$. 
Inspired by the solutions to  problem \eqref{lp1} classified in \cite{ChipotFilaShafrir} , we consider the family of functions defined on $M$
	\begin{equation}
		\varphi_{\beta,D}(x) = \frac{\beta^{\frac{n-2}{2}}}{\left(\dist_{\hat{g}}{(x, P_{\beta,D})}^2-\beta^2\right)^{\frac{n-2}{2}}},
	\end{equation}
	and the modified family
	\begin{equation}\label{tf13}
		\func(x)=\mu^{\frac{n-2}{2}} \varphi_{\beta, D}(x),
	\end{equation}
	where $\mu$ is a positive constant, yet to be set. The strategy will be to estimate the energy $I$ on 
	such functions,  choosing an adequate value for $\mu$ so that $I(\func)\to -\infty$ as $D\to \frac{1}{\sqrt{n(n-1)}}$. In order to simplify the notation, let us also define
	\begin{equation*}
		{\e_D}^2 = n(n-1)D^2-1,
	\end{equation*}
	and notice that $\e_D\to 0$ as $D\to\frac{1}{\sqrt{n(n-1)}}.$ It is easy to see that
	\begin{align}\label{tf12}I(\func)&=\frac{2(n-1)}{n-2}\mu^{n-2}\int_M \modgrad{\varphi_{\beta,D}}+\frac{\mu^{n-2}}{2}\int_M S {\varphi_{\beta, D}}^2 + \nonumber \\&+\frac{n-2}{2n}\mu^n\int_M \abs{K}{\varphi_{\beta, D}}^\dst-(n-2)\mu^{n-1}\int_{\partial M}H {\varphi_{\beta,D}}^\dsh.
	\end{align}
	Take $r>0$ small but fixed: it is easy to show that all the above integrals, 
	restricted to the regions (both at the interior and at the boundary) outside $B(p,r)$ tend 
	to zero as $\beta \to 0$, uniformly for $D$ close to $\frac{1}{\sqrt{n(n-1)}}$. 
	
	Having observed this, we proceed by estimating the \emph {boundary term} as 
	\begin{align*}
		\mbox{(B)}:=\int_{\partial M}H{\varphi_{\beta, D}}^\dsh &= \int_{\partial M\cap B^{n-1}(p,r)}H{\varphi_{\beta, D}}^\dsh+
		o_\beta(1) \\ &\geq \left(\min_{\partial M \cap B^{n-1}(p,r)}H\right) \int_{\partial M\cap B^{n-1}(p,r)}{\varphi_{\beta, D}}^\dsh + o_\beta(1), 
	\end{align*}
 where $o_\beta(1)$ tends to zero as $\beta$ does. 
Take normal coordinates $\{x_1,\ldots,x_n\}$ around $P_{\beta,D}$ in such a way that the 
geodesic joining $p$ to $P_{\beta,D}$ is mapped into the $x_n$-axis, and set $x' = (x_1, \dots, x_{n-1})$. 
We can rewrite the previous integral in the following way, up to multiplicative errors of the form $(1 + o_r(1))$,
with $o_r(1)$ tending to zero as $r \to 0$,  due to the presence of integral volume elements: 
	\begin{align*}
		\mbox{(B)} &\geq \left(\min_{\partial M \cap B^{n-1}(p,r)}H\right) \int_{B^{n-1}(p,r)}\frac{\beta^{n-1}dx_1\cdots dx_{n-1}}{\left(|x'|^2+n(n-1)D^2\beta^2-\beta^2\right)^{n-1}} + o_\beta(1)\\ &= \left(\min_{\partial M \cap B^{n-1}(p,r)}H\right) \int_0^r\int_{\Sph^{n-2}(0,s)}\frac{\beta^{n-1}dx_1\cdots dx_{n-1}ds}{\left(s^2+\beta^2{\e_D}^2\right)^{n-1}} + o_\beta(1) \\ &= \left(\min_{\partial M \cap B^{n-1}(p,r)}H\right) \vert \Sph^{n-2}\vert \beta^{n-1}\int_0^r\frac{s^{n-2}ds}{(s^2+\beta^2{\e_D}^2)^{n-1}}
		+ o_\beta(1). 
	\end{align*}
	The latter integral can be addressed via the change of variable $t=\frac{s}{\e_D\beta}$:
	\begin{equation}\label{tf3}
		\beta^{n-1}\int_0^r\frac{s^{n-2}ds}{(s^2+\beta^2{\e_D}^2)^{n-1}} = \frac{1}{{\e_D}^{n-1}}\int_0^{\frac{r}{\e_D\beta}}\frac{t^{n-2}dt}{(1+t^2)^{n-1}}.
	\end{equation}
	We can always take $r,\beta$ and $D$ in such a way that $\frac{r}{\e_D \beta}\to +\infty$ as $\e_D$ tends to zero. Calling
	\begin{equation*}
		\gamma_n = \int_0^{+\infty}\frac{t^{n-2}dt}{(1+t^2)^{n-1}} = \frac{\sqrt{\pi}\:\Gamma\left(\frac{n-1}{2}\right)}{2^{n-1}\Gamma\left(\frac{n}{2}\right)},
	\end{equation*}
	we finally have:
	\begin{equation}\label{tf1}
		\int_{\partial M} H {\varphi_{\beta, D}}^\dsh \geq H(p) \vert \Sph^{n-2}\vert \gamma_n\frac{1+o_r(1)}{{\e_D}^{n-1}}+o_\beta(1).
	\end{equation}
	Now, in \eqref{tf12} we bound the \emph{critical term in the interior} of $M$ as:
	\begin{align*}
		\mbox{(I)}:&= \int_M \abs{K}{\varphi_{\beta, D}}^\dst = \int_{M\cap B^{n}(p,r)}\abs{K}{\varphi_{\beta, D}}^\dst  
		+ o_\beta(1) \\ &\leq \left(\max_{M\cap B^n(p,r)}\abs{K}\right)\int_{M\cap B^{n}(p,r)}{\varphi_{\beta, D}}^\dst + o_\beta(1). 
	\end{align*}
	Taking normal coordinates  as before we obtain, up to multiplicative constants of 
	order $(1 + o_r(1))$:
	\begin{align*}
		\mbox{(I)}\leq \left(\max_{M\cap B^n(p,r)}\abs{K}\right) \int_{B^n_+(0,r)}\frac{\beta^n dx_1\cdots dx_n}{\left(|x'|^2+(x_n+\sqrt{n(n-1)}D\beta)^2-\beta^2\right)^n} + o_\beta(1). 
	\end{align*}
	Notice that
	\begin{align*}
		\mbox{(I')}&:=\int_{B^n_+(0,r)}\frac{\beta^n dx_1\cdots dx_n}{\left(|x'|^2+(x_n+\sqrt{n(n-1)}D\beta)^2-\beta^2\right)^n} \\ &=\int_0^r\int_0^{\sqrt{r^2-x_n^2}}\int_{\Sph^{n-2}}\frac{\beta^n dx_1\cdots dx_nds}{\left(s^2+(x_n+\sqrt{n(n-1)}D\beta)^2-\beta^2\right)^n} \\ &\leq \abs{\Sph^{n-2}}\beta^n \int_0^r\int_0^r\frac{s^{n-2} dx_nds}{\left(s^2+x_n^2+{\e_D}^2\beta^2+2x_n\sqrt{n(n-1)}D\beta\right)^n}.
	\end{align*}
	Since we are taking $r\to 0$, $x_n^2$ is negligible compared to $x_n$. This fact, together with Fubini's Theorem, gives:
	\begin{align*}
		\mbox{(I')} &\leq \abs{\Sph^{n-2}}\beta^n \int_0^r\left(\int_0^r\frac{s^{n-2}dx_n}{\left(s^2+{\e_D}^2\beta^2+2x_n\sqrt{n(n-1)}D\beta\right)^n}\right)ds =\\ &= \frac{\abs{\Sph^{n-2}}\beta^{n-1}}{2(n-1) D \sqrt{n(n-1)}} \biggl\{ \int_0^r \frac{-s^{n-2}ds}{\left(s^2+2r\sqrt{n(n-1)}D\beta+{\e_D}^2\beta^2\right)^{n-1}}+\\ &+\int_0^r\frac{s^{n-2}ds}{\left(s^2+{\e_D}^2\beta^2\right)^{n-1}} \biggr\}.
	\end{align*}
	Since $\beta$ and $r$ are small but fixed parameters, the first term of the sum is non-singular and can be uniformly bounded. On the other hand, the second term has been computed in \eqref{tf3}. If we put everthing together, we find:
	\begin{equation}\label{tf4}
		\int_M \abs{K}{\varphi_{\beta, D}}^\dst \leq \abs{K(p)}\gamma_n \frac{\abs{\Sph^{n-2}}}{2(n-1)}\frac{1+o_r(1)}{{\e_D}^{n-1}}
		+ o_\beta(1). 
	\end{equation}
	We can repeat these computations for the \emph{quadratic term} in \eqref{tf12}, and eventually obtain:
	\begin{equation}\label{tf5}
		\int_M S{\varphi_{\beta, D}}^2 \leq S(p) \frac{\abs{\Sph^{n-2}}\Gamma\left(\frac{n-5}{2}\right)\Gamma\left(\frac{n-1}{2}\right)}{4(n-3)\beta^2\Gamma\left(n-3\right)}\frac{1+o_r(1)}{{\e_D}^{n-5}}+o_\beta(1) = o\left(\inv{{\e_D}^{n-2}}\right).
	\end{equation}
	Finally, we address the \emph{Dirichlet energy}. A direct computation shows  that
	\begin{equation*}
		\abs{\nabla \varphi_{\beta, D}(x)}= \frac{n-2}{2}\beta^{\frac{n-2}{2}}\frac{ \abs{\nabla\dist(x,P_{\beta,D})^2}}{\left(\dist(x,P_{\beta,D})^2-\beta^2\right)^\frac{n}{2}}.
	\end{equation*}
	Using the inequality $\abs{\nabla\dist(x,p)^2}\leq 2\dist(x,p)$, we have
	\begin{equation}\label{tf7}
		\abs{\nabla \varphi_{\beta, D}(x)} \leq (n-2)\beta^{\frac{n-2}{2}}\frac{ \dist(x,P_{\beta,D})}{\left(\dist(x,P_{\beta,D})^2-\beta^2\right)^\frac{n}{2}}.
	\end{equation}
	Reasoning as before, we find 
	\begin{equation}\label{tf10}
		\int_M \modgrad{\varphi_{\beta, D}}\leq \int_{M\cap B^{n}(p,r)}\modgrad{\varphi_{\beta, D}} + o_\beta(1). 
	\end{equation}
	Again, taking normal coordinates as above we have, up to multiplicative errors of the form $(1+o_r(1))$
	\begin{align}\label{tf9}
		&\mbox{(D')}:= \int_{M\cap B^{n}(p,r)}\modgrad{\varphi_{\beta, D}} =\nonumber\\ &= (n-2)^2 \beta^{n-2}\int_{B^n_+(0,r)}{\frac{\left(|x'|^2+\left(x_n+\sqrt{n(n-1)}D\beta\right)^2\right)dx}{\left(|x'|^2+\left(x_n+\sqrt{n(n-1)}D\beta\right)^2-\beta^2\right)^n}} \nonumber\\ &= (n-2)^2\left\lbrace \int_{B^n_+(0,r)}\frac{\beta^{n-2}dx_1\cdots dx_n}{\left(|x'|^2+\left(x_n+\sqrt{n(n-1)}D\beta\right)^2-\beta^2\right)^{n-1}}\right. \nonumber\\ &+ \left. \int_{B^n_+(0,r)}\frac{\beta^{n}dx_1\cdots dx_n}{\left(|x'|^2+\left(x_n+\sqrt{n(n-1)}D\beta\right)^2-\beta^2\right)^{n}}\right\rbrace.
	\end{align}
	The first integral term is of lower order $o\left(\frac1{{\e_D}^{n-2}}\right)$, and the second one is exactly (I'), which is already calculated. Finally, combining \eqref{tf10} and \eqref{tf9}, we conclude:
	\begin{equation}\label{tf11}
		\int_M \modgrad{\varphi_{\beta, D}} \leq \gamma_n\frac{(n-2)^2\abs{\Sph^{n-2}}}{2(n-1)}\frac{1+o_r(1)}{{\e_D}^{n-1}}+o\left(\inv{{\e_D}^{n-2}}\right).
	\end{equation}
	Recalling that $\func(x)=\mu^{\frac{n-2}{2}} \varphi_{\beta, D}(x),$ and substituting \eqref{tf1}, \eqref{tf4}, \eqref{tf5} and \eqref{tf11} into \eqref{tf12} we finally have, up to 
	multiplicative contants of the form $(1 + o_r(1))$ in each term: 
	\begin{align}\label{tf14}
		I(\func) &\leq \gamma_n \mu^{n-2}\abs{\Sph^{n-2}}(n-2)\left( \frac{\abs{K(p)}}{4n(n-1)}\mu^2-H(p)\mu+1\right)\inv{{\e_D}^{n-1}} +\nonumber\\ &+o\left(\inv{{\e_D}^{n-2}}\right).
	\end{align}
	Denoting by $P$ the polynomial in $\mu$
	\begin{equation}\label{tf16}
		P(\mu) =  \frac{\abs{K(p)}}{4n(n-1)}\mu^2-H(p)\mu+1,
	\end{equation}
	we know that there exist values  for which $P(\mu)<0$ if, and only if, its discriminant is positive, that is,
	\begin{equation}
		H(p)^2- \frac{\abs{K(p)}}{n(n-1)} > 0,
	\end{equation}
	which is equivalent to our hypothesis $\D_n(p)>1$. In view of \eqref{tf14}, we can assert that there exist $\beta, r >0$ 9small) and $\mu >0$ such that $I(\func) \to -\infty$ as $D\to \inv{\sqrt{n(n-1)}}$.
\end{proof}
\begin{remark}
	Minimizing in \eqref{tf16} we can compute the optimal value for the constant, namely,
	\begin{equation*}
		\mu = \frac{2n(n-1)H(p)}{\abs{K(p)}} = 2\frac{\D_n(p)}{\sqrt{\abs{K(p)}}}.
	\end{equation*}
\end{remark}

\end{document}